\numberwithin{equation}{section}
\newtheorem{Thm}{Theorem}[section]
\newtheorem{Prop}[Thm]{Proposition}
\newtheorem{Lem}[Thm]{Lemma}
\newtheorem{Cor}[Thm]{Corollary}
\newtheorem{Defff}[Thm]{Definition}
\newtheorem{ex}[Thm]{Example}
\numberwithin{equation}{section}
\newcommand{\ms}{\mathscr}
\newcommand{\mr}{\mathrm}
\newcommand{\al}{\alpha}
\newcommand{\op}{\oplus}
\newcommand{\eq}{\begin{equation}}
\newcommand{\en}{\end{equation}}
\newcommand{\beqna}[1]{\begin{eqnarray}\label{#1}}
\newcommand{\eeqna}{\end{eqnarray}}
\newcommand{\beqn}[1]{\begin{equation}\label{#1}}
\newcommand{\eeqn}{\end{equation}}
\newcommand{\gk}{Gelfand-Kirillov dimension }
\newcommand{\gks}{Gelfand-Kirillov dimensions }
\newcommand{\af}{$ \mathbf{a} $}
\newcommand{\mc}[1]{\mathcal{#1}}
\newcommand{\msc}[1]{\mathscr{#1}}
\newcommand{\mf}[1]{\mathfrak{#1}}
\renewcommand{\subset}{\subseteq}
\newcommand{\rar}{\rightarrow}
\newcommand{\bil}[2]{\langle{#1},{#2}^{\vee} \rangle }
\newcommand{\hs}{ \mathfrak{h}^*}
\newcommand{\Ocat}{\mathscr{O}}
\newcommand{\aff}{ \mathbf{a} }
\newcommand{\gkd}{\operatorname{GKdim}}
\renewcommand{\hom}{\operatorname{Hom}}
\newcommand{\sln}{\mathfrak{sl}(n)}
\newcommand{\lambdarho}{\lambda+\rho=(\lambda_1,\lambda_2,\cdots,\lambda_n)}
\newcommand{\sn}{\mf{S}_n}
\newcommand{\su}{{SU}}
\numberwithin{equation}{section}
\newlength\cellsize \setlength\cellsize{15\unitlength}
\newcommand\cellify[1]{\def\thearg{#1}\def\nothing{}%
\ifx\thearg\nothing
\vrule width0pt height\cellsize depth0pt\else
\hbox to 0pt{\usebox2\hss}\fi%
\vbox to 15\unitlength{
\vss
\hbox to 15\unitlength{\hss$#1$\hss}
\vss}}
\newcommand\tableau[1]{\vtop{\let\\=\cr
\setlength\baselineskip{-16000pt}
\setlength\lineskiplimit{16000pt}
\setlength\lineskip{0pt}
\halign{&\cellify{##}\cr#1\crcr}}}
\newcommand\expath[1]{%
\hbox to 0pt{\usebox3\hss}%
\vbox to 15\unitlength{
\vss
\hbox to 15\unitlength{\hss$#1$\hss}
\vss}}
\newcommand\bas[1]{\omit \vbox to \cellsize{ \vss \hbox to \cellsize{\hss$#1$\hss} \vss}}
\begin{document}

\bibliographystyle{plain}

\title[GK dimensions of certain Harish-Chandra modules]{Gelfand-Kirillov Dimensions of  Highest Weight Harish-Chandra Modules for $\su(p,q)$}

\author{Zhanqiang Bai}\author{Xun Xie}

\address{School of Mathematics and Statistics, Wuhan University, Wuhan 430072, China}
\email{Zhanqiang.bai@whu.edu.cn}

\address{School of Mathematics and Statistics, Beijing Institute of Technology, Beijing 100081, China}
\email{xieg7@163.com}


\subjclass[2010]{Primary 22E47; Secondary 17B10}

\date{\today}


\keywords{Gelfand-Kirillov dimension, Lusztig's a-function, associated variety, Young tableau, Harish-Chandra module} 

\maketitle

\begin{abstract}
Let $ (G,K) $ be an irreducible Hermitian symmetric pair of non-compact type with  $G=\su(p,q)$,  and let $ \lambda $ be an integral weight such that the simple highest weight module $  L(\lambda) $ is a Harish-Chandra $ (\mathfrak{g},K) $-module.  We give a combinatorial  algorithm for the Gelfand-Kirillov dimension of  $ L(\lambda) $. This enables us to  prove that the Gelfand-Kirillov dimension of  $ L(\lambda) $  decreases  as the integer $ \bil{\lambda+\rho}{\beta} $  increases, where $\rho$ is the half sum of positive roots and $\beta$ is the maximal noncompact root. Finally by the combinatorial algorithm, we obtain  a  description of the associated variety of $ L(\lambda) $. 
\end{abstract}

\section{Introduction}

In the representation theory of Lie groups and Lie algebras, the Gelfand-Kirillov dimension is an important invariant to measure the size of   an infinite-dimensional module. This kind of invariant was first introduced by Gelfand and Kirillov \cite{Ge-Ki}. In this paper, we are concerned with the Gelfand-Kirillov dimensions of highest weight $ (\mf{g},K) $-modules, where $ \mf{g} $ is the Lie algebra of  the Hermitian type Lie group $G= \su(p,q) $ and $ K $  is the maximal compact subgroup $ K=S(U(p)\times U(q)) $ with   $n=p+q$.

To state our problem, we need some notations  from  \cite{EHW}.
Let $ (G,K) $ be an irreducible Hermitian symmetric pair of non-compact type. We denote their	 Lie algebras by ($\mathfrak{g}_{0}$, $\mathfrak{k}_{0}$), and denote their complexification by $\mathfrak{g}={\mathfrak{g}_{0}}\bigotimes \mathbb{C}$ and $\mathfrak{k}=\mathfrak{k}_{0}\bigotimes\mathbb{C}$. Then $\mathfrak{k}=\mathbb{C}H\bigoplus[\mathfrak{k}, \mathfrak{k}]$ with $\operatorname{ad}(H)$ having eigenvalues $0, 1, -1$ on $\mathfrak{g}$. If we denote $\mathfrak{p}^{\pm}=\{X\in \mathfrak{g}\mid [H, X]=\pm X\}$. Then $\mathfrak{g}=\mathfrak{p}^{-}\bigoplus \mathfrak{k}\bigoplus \mathfrak{p}^{+}$. Let $\mathfrak{h}_{0} \subseteq \mathfrak{k}_{0}$ be a Cartan subalgebra. 
 Let $\Phi$ denote the set of  roots with respect to ($\mathfrak{g},\mathfrak{h}$), let $ W $ denote the Weyl group of $ \mf{g} $, and let $\Phi_{c}$ (resp. $\Phi_{n}$) denote the set of compact roots (resp. noncompact roots). Choose a Borel subalgebra $\mathfrak{b}$ containing $\mathfrak{h}$. Let $\beta $ denote the unique maximal non-compact root of $\Phi^{+}$. Now choose $\zeta  \in \mathfrak{h}^{*} $ so that  $\zeta $ is orthogonal to $\Phi_{c}$  and $\bil{\zeta }{ \beta}=1$. For $ \lambda\in\hs $,
the simple highest weight module $ L(\lambda) $ is a $ (\mf{g},K) $-module if and only if $ \lambda $ is $ \Phi_c^+ $-dominant integral (i.e $ \lambda $ is dominant and integral as a $ \mf{k} $-weight) and $ \bil{\lambda+\rho}{\beta} \in\mathbb{R}$. In this paper, we  call such a module $ L(\lambda) $  a \textit{highest wight $ (\mf{g},K) $-module} for simplicity. Note that if $ L(\lambda) $ is a $ (\mf{g},K) $-module then we can write $ \lambda=\tilde{\lambda}+z\zeta $, 
for some  $ z\in\mathbb{R} $ and for an integral and $ \Phi_c ^+$-dominant weight $ \tilde{\lambda} $ such that $ \bil{\tilde{\lambda}+\rho}{\beta} =0$; in fact $ z=\bil{\lambda+\rho}{\beta} $ and $ \tilde{\lambda}=\lambda-z\zeta $.

The motivation of this paper origins from the study  on unitary  highest weight $ (\mf{g},K) $-modules; see \cite{Bai-Hu}.
In \cite{Bai-Hu},  the Gelfand-Kirillov dimension of a unitary highest weight module $ L(\lambda) $ is expressed explicitly in terms of $ z:=\bil{\lambda+\rho}{\beta} $. From this result,  one can see that the Gelfand-Kirillov dimension decreases as $ z $ takes unitary reduction points and increases. Then a natural question is to ask whether   a similar result holds  for all highest weight $ (\mf{g},K) $-modules.  We will obtain   a positive answer in this paper for the Lie group $ \su(p,q) $.

In 1978, Joseph \cite{Joseph-78} found that the GK-dimension of  a highest weight $\mathfrak{sl}(n)$-module $L_w:=L(-w\rho-\rho)$ with $w\in W$ can be computed by Robinson-Schensted correspondence. This result suggests a combinatorial algorithm for highest weight $ (\mf{g},K) $-modules in the case $ G=\su(p,q) $.
It is known that   $ \gkd L(\lambda) $  for any weight $ \lambda $ can be reduced to $L_w$ for some $ w\in W $ by using Jantzen's translation functors \cite{Jantzen79Lec750}, but we fail to find a concrete result in published literatures. In the first part of this paper, we will formulate an explicit algorithm  for $  \gkd L(\lambda)$ for $ \mf{g}=\mf{sl}(n) $  and any  weight $ \lambda\in\hs $. A main tool that we use is Lusztig's $ \aff $-function on the Weyl group.

The  $ \aff $-function  is  defined in a  combinatorial way using the Kazhdan-Lusztig basis of the Hecke algebra; see \cite{lusztig1985cellsI} or \S \ref{sec:cell}. There is a formula  connecting the $ \aff $-function and the Gelfand-Kirillov  dimension\begin{equation*}
\gkd L_w=|\Phi^+|-\aff(w) \text{ for } w\in W,
\end{equation*}
which is formulated in \cite{lusztig1985A_n}.
Thus computations of Gelfand-Kirillov  dimensions can be ultimately reduced to that of  Lusztig's $ \aff $-functions.
In the case $ \mf{g}=\mf{sl}(n) $, the Weyl group is isomorphic to the symmetric group $ \mf{S}_n $ in $ n $ letters, and the $ \aff $-functions on $ \mf{S}_n $ can be easily determined by using some known properties about two-side cells.  Our results can be stated as follows.

\begin{Prop}[See Prop. \ref{pr:main1}]
Let $ \mf{g} $ be a simple complex Lie algebra. For $ \lambda\in\hs $, let $ W_{[\lambda]} $ be the integral Weyl group relative to $ \lambda $, i.e the subgroup of $W$ generated by the set of reflections $\{s_{\alpha}\mid \langle \lambda+\rho,\alpha^{\vee}\rangle\in \mathbb{Z}\}$ and let $ \aff_{[\lambda]}:W_{[\lambda]}\rar \mathbb{N}$ be  the Lusztig's $ \aff $-function associated to $ W_{[\lambda]} $. Then we have \[
\gkd L(\lambda)=|\Phi^+|-\aff_{[\lambda]}(w)
\]where $ w\in W_{[\lambda]} $ is the element of minimal length such that $ w^{-1}.\lambda $  is antidominant (under the dot action).
\end{Prop}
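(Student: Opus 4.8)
The plan is to prove the identity in three regimes of increasing generality, reducing each to Lusztig's integral formula $\gkd L_w=|\Phi^+|-\aff(w)$ quoted above. Throughout, set $\mu:=w^{-1}.\lambda$, so that $\mu$ is antidominant and $w\in W_{[\lambda]}$ is the minimal-length element carrying $\mu$ to $\lambda$; let $W_J\subseteq W_{[\lambda]}$ be the integral stabilizer of $\mu$, a parabolic subgroup, so $w$ is the minimal-length representative of the coset $wW_J$. I would work with the reformulation $\gkd L(\lambda)=\dim\mc{V}(L(\lambda))$ via the associated variety, together with the two basic properties of translation functors: tensoring by a finite-dimensional module preserves Gelfand--Kirillov dimension, and projection onto a block cannot increase it. In the first regime, where $W_{[\lambda]}=W$ and $\mu$ is regular, the element $w$ is unique hence minimal, and the claim is exactly Lusztig's formula, which holds for any regular integral antidominant $\mu$.

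In the second regime, $W_{[\lambda]}=W$ but $\mu$ is singular. Choose a regular antidominant $\nu$ in the same integral coset standing in the appropriate closure relation to $\mu$, and let $T^\mu_\nu,T^\nu_\mu$ be the translation functors between the two central characters. In the antidominant normalisation one has $T^\mu_\nu L(w.\nu)=L(w.\mu)=L(\lambda)$ precisely because $w$ is the minimal-length representative of $wW_J$, the remaining representatives being annihilated. Then $\gkd L(\lambda)=\gkd T^\mu_\nu L(w.\nu)\le\gkd L(w.\nu)$ by the two properties above; conversely $L(w.\nu)$ occurs as a composition factor of $T^\nu_\mu L(\lambda)$, so $\gkd L(w.\nu)\le\gkd T^\nu_\mu L(\lambda)\le\gkd L(\lambda)$. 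Hence $\gkd L(\lambda)=\gkd L(w.\nu)=|\Phi^+|-\aff(w)$, which is the asserted value since $\aff_{[\lambda]}=\aff$ here.

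The third regime, general $\lambda$ with $W_{[\lambda]}\subsetneq W$, is the main obstacle. The previous reduction is unavailable because translation functors tensor by integral modules and therefore cannot leave the non-integral coset $\lambda+\mathbb{Z}\Phi$; the combinatorics is now controlled by the proper subgroup $W_{[\lambda]}$, while the geometry still takes place in the full nilradical $\mf{n}$ of dimension $|\Phi^+|$. Reconciling these is exactly the concrete statement the literature lacks. I would first reduce to the regular case inside the coset by repeating the Regime~2 argument run entirely through $W_{[\lambda]}$, which does preserve non-integrality, and then establish the regular non-integral identity
\[
\gkd L(\lambda)=\bigl(|\Phi^+|-|\Phi_{[\lambda]}^+|\bigr)+\bigl(|\Phi_{[\lambda]}^+|-\aff_{[\lambda]}(w)\bigr),
\]
where the second bracket is the Gelfand--Kirillov dimension attached to the integral root subsystem $\Phi_{[\lambda]}$, an honestly integral regular block to which Regimes~1--2 apply with $W_{[\lambda]}$ as Weyl group, and the first bracket records the $|\Phi^+|-|\Phi_{[\lambda]}^+|$ non-integral positive-root directions along which $L(\lambda)$ grows freely. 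Concretely I expect $\mc{V}(L(\lambda))$ to decompose as the associated variety of the integral data times the span of the non-integral root spaces; equivalently, one re-runs Lusztig's Hecke-algebra computation of the leading term of the Hilbert--Samuel polynomial of $L(\lambda)$ over $S(\mf{n}^-)$, with the character of $L(\lambda)$ now expressed through the Kazhdan--Lusztig polynomials of $W_{[\lambda]}$, so that $\aff_{[\lambda]}$ replaces $\aff$ while $\dim S(\mf{n}^-)=|\Phi^+|$ is unchanged.

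The genuinely delicate point is this last reconciliation: proving that the dimension drop from $|\Phi^+|$ is measured by $\aff_{[\lambda]}(w)$ even though only $W_{[\lambda]}$ acts. Either route---the product decomposition of $\mc{V}(L(\lambda))$, or the leading-coefficient computation via the $W_{[\lambda]}$-Hecke algebra---requires showing that the non-integral directions contribute their full dimension while the integral directions contribute exactly the Lusztig $\aff$-defect, and it is here that the non-integral Kazhdan--Lusztig theory and the behaviour of associated varieties under passage to $\Phi_{[\lambda]}$ must be invoked with care.
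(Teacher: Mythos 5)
Your Regimes 1 and 2 are correct and coincide with the paper's treatment of the integral case: the paper proves exactly your Regime 2 statement (Corollary \ref{cor:translation} together with Lemma \ref{lem:upper}, the latter established by the same two translation-functor inequalities you use) and then quotes Lusztig's formula (Lemma \ref{lem:tie}). Your target identity in Regime 3,
\[
\gkd L(\lambda)=\bigl(|\Phi^+|-|\Phi_\lambda^+|\bigr)+\bigl(|\Phi_\lambda^+|-\aff_{[\lambda]}(w)\bigr),
\]
is also the correct one. The genuine gap is that Regime 3 --- which you rightly call the main obstacle, and which is the entire content of the proposition beyond Lusztig's theorem --- is never actually proved: ``I expect $\mc{V}(L(\lambda))$ to decompose\dots'' and ``must be invoked with care'' are statements of intent, not arguments. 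Moreover, your first contemplated route (a product decomposition of the associated variety obtained by ``passage to $\Phi_{[\lambda]}$'') hits an immediate structural obstruction: the reductive algebra $\mf{g}_\lambda$ with Cartan subalgebra $\mf{h}$ and root system $\Phi_\lambda$ is in general \emph{not} a Lie subalgebra of $\mf{g}$ (this fails outside $ADE$ type), so there is no ``integral part'' of $L(\lambda)$ defined by restriction whose associated variety could appear as a factor, and no map $U(\mf{g})\otimes_{U(\mf{g}_\lambda+\mf{n}^+)}L^0(\lambda)\to L(\lambda)$ to compare the two sides.

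What closes the gap in the paper is a specific pair of tools, and your proposal supplies neither. First, Soergel's equivalence of categories \cite{Soergel1990}: the block $\Ocat_\lambda$ of $\Ocat(\mf{g})$ is equivalent to the \emph{integral} block $\ms{O}^0_\lambda$ of category $\ms{O}$ for $\mf{g}_\lambda$, matching simples with simples and Vermas with Vermas. This transports composition multiplicities, and since Verma characters visibly factor, it yields
\[
\mr{ch}\,L(w.\lambda)=\dfrac{1}{\prod_{\alpha\in\Phi^+\setminus\Phi^+_\lambda}(1-e^{-\alpha})}\;\mr{ch}\,L^0(w.\lambda);
\]
this is your ``non-integral Kazhdan--Lusztig theory'' in usable form, and it applies to singular $\lambda$ as well, so your preliminary translation to a regular weight inside the coset (legitimate via Jantzen, but extra work) is unnecessary. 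Second, one must convert this character identity into $\gkd L(w.\lambda)=|\Phi^+|-|\Phi_\lambda^+|+\gkd L^0(w.\lambda)$; this is not formal, precisely because characters do not remember the module structure and $\mf{g}_\lambda\not\subset\mf{g}$. The paper does it at the associated-graded level, where commutativity saves the day: $\mf{g}_\lambda$ \emph{is} a subspace of $\mf{g}$, so $S(\mf{g}_\lambda)\hookrightarrow S(\mf{g})$ makes sense, there is a natural surjection $S(\mf{g})\otimes_{S(\mf{g}_\lambda+\mf{n}^+)}\operatorname{gr}L^0(\lambda)\twoheadrightarrow\operatorname{gr}L(\lambda)$, and the character identity forces it to be an isomorphism, whence Gelfand--Kirillov dimensions add by \cite[Prop.~6.6]{Kr-Le}. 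That graded-module argument is the precise content of your claim that ``the non-integral directions contribute their full dimension,'' and it is the step your proposal still owes.
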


Now we assume  that  $ \mf{g}=\mf{sl}(n) $. By the proposition above, we have an explicit algorithm of $ \gkd L(\lambda) $. For a weight $ \lambda\in\hs $ we write $ \lambdarho $; see \S \ref{sec:notation}. We associated to $ \lambda $ a set $ P(\lambda) $ of some Young tableaux as follows. Let $ \lambda_X:\lambda_{i_1}, \lambda_{i_2}, \dots, \lambda_{i_r} $  be a maximal subsequence of $ \lambda_1,\lambda_2,\dots,\lambda_n $ such that $ \lambda_{i_k} $, $ 1\leq k\leq r $ are congruent to each other modulo $ \mathbb{Z} $. Then the Young tableau associated to the subsequence $ \lambda_X $ using Schensted insertion algorithm (see \S\ref{sec:schensted}) is a Young tableau in $ P(\lambda) $. If $ \lambda $ is an integral weight then $ P(\lambda) $ consists of only one Young tableau and we identify $ P(\lambda) $ with this Young tableau in this case.

If $ Y $ is a Young tableau, we define $ A(Y) :=\sum_{i\geq 1}\frac{c_i(c_i-1)}2$ where $ c_i $ is the number of entries in the $ i $-th column of $ Y $. And we  define $ A(P(\lambda)) :=\sum_{Y\in P(\lambda)}A(Y)$.

\begin{Thm}[See Thm. \ref{thm:main2}]
Let $ \mf{g}=\mf{sl}(n) $. For any $ \lambda\in\hs $, we have \[
\gkd L(\lambda)=\frac{n(n-1)}2-A(P(\lambda)).
\]
\end{Thm}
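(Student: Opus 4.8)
The plan is to combine Proposition~\ref{pr:main1} with the classical combinatorial description of Lusztig's $\aff$-function on symmetric groups. Since $|\Phi^+|=\frac{n(n-1)}2$ for $\mf{g}=\mf{sl}(n)$, Proposition~\ref{pr:main1} reduces the statement to the single identity
\[
\aff_{[\lambda]}(w)=A(P(\lambda)),
\]
where $w\in W_{[\lambda]}$ is the minimal-length element for which $w^{-1}.\lambda$ is antidominant. Writing $\lambdarho$, dot-antidominance of $w^{-1}.\lambda$ means exactly that the sequence $\lambda_{w(1)},\dots,\lambda_{w(n)}$ is weakly increasing; thus $w^{-1}$ is a length-minimal permutation sorting the entries of $\lambda+\rho$, with ties among equal entries broken so as to preserve their left-to-right order.

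The first step is to decompose the problem along congruence classes. Since $W_{[\lambda]}$ is generated by the reflections $s_{e_i-e_j}$ with $\lambda_i\equiv\lambda_j\pmod{\mathbb{Z}}$, grouping the indices $\{1,\dots,n\}$ into their congruence classes $X_1,\dots,X_k$ modulo $\mathbb{Z}$ yields an isomorphism $W_{[\lambda]}\cong\mf{S}_{|X_1|}\times\cdots\times\mf{S}_{|X_k|}$. As $w\in W_{[\lambda]}$ can only reorder indices lying in a common class, it factors as $w=w_{X_1}\cdots w_{X_k}$, where $w_{X_t}$ sorts the subsequence $\lambda_{X_t}$. Because Lusztig's $\aff$-function is additive over a direct product of Coxeter groups, $\aff_{[\lambda]}(w)=\sum_{t}\aff(w_{X_t})$, and since $A(P(\lambda))=\sum_t A(P(\lambda_{X_t}))$ by definition, it suffices to treat a single congruence class, i.e.\ to prove $\aff(w_X)=A(P(\lambda_X))$ for each $X$.

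Fix a class $X$ of size $r$, so $w_X\in\mf{S}_r$. Here I would invoke two standard facts about type $A$: first, $\aff$ is constant on each two-sided Kazhdan--Lusztig cell, and on the cell indexed, via the Robinson--Schensted correspondence, by a partition of shape $\mu$ its value equals $\sum_i\binom{c_i}{2}$, where $c_i$ is the $i$-th column length of $\mu$; second, $A(Y)$ is by definition exactly this quantity for a tableau $Y$ of shape $\mu$. Hence $\aff(w_X)=A(P(\lambda_X))$ follows once the shape of the cell of $w_X$ is identified with the shape of the Schensted tableau $P(\lambda_X)$. This is the heart of the argument: let $\sigma_X$ be the standardization of $\lambda_X$ (equal entries standardized in increasing left-to-right order); then the tie-breaking description of the previous paragraph gives $w_X=\sigma_X^{-1}$. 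Since the Robinson--Schensted shape is invariant under inversion, which merely interchanges the insertion and recording tableaux, and standardization preserves the insertion-tableau shape, the cell shape of $w_X$ equals the shape of $P(\lambda_X)$. Summing over $t$ then yields $\aff_{[\lambda]}(w)=A(P(\lambda))$, and the theorem.

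I expect the main obstacle to be this last shape identification in the non-regular case, where $\lambda+\rho$ has repeated entries: one must verify carefully that the length-minimal sorting permutation $w_X$ breaks ties in precisely the way compatible with the standardization convention under which Schensted insertion preserves shape, and that $\aff(w_X)$ genuinely depends only on the two-sided cell of $w_X$ (equivalently, only on this shape). The regular-weight case is governed directly by Joseph's theorem \cite{Joseph-78}, so the general argument is essentially a careful bookkeeping of the standardization and additivity statements above.
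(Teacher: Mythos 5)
Your proposal is correct and follows essentially the same route as the paper: reduction via Proposition \ref{pr:main1}, decomposition of $W_{[\lambda]}$ along congruence classes together with additivity of $\aff$ on direct products (Lemma \ref{lem:Hecke}(v)), and, within each integral block, identification of the minimal-length element with the inverse of the standardization so that constancy of $\aff$ on two-sided cells and Robinson--Schensted shape considerations yield $\aff_{[\lambda]}(w)=A(P(\lambda))$, exactly as in Lemma \ref{lem:zzaA} and the proof of Theorem \ref{thm:main2}. The only cosmetic difference is that you quote the type-$A$ value of $\aff$ on a cell of given shape as a standard fact, whereas the paper derives it in Proposition \ref{pr:avalue} from Lemmas \ref{lem:Hecke}(iii),(iv) and \ref{lem:zy}.
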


\begin{ex}
If $ \lambda+\rho=(3,3.5,2,1.5,-1,5.5,-1,0,1.1) $, then $ P(\lambda) $ has three Young tableaux:
\[
 \tableau{-1&-1 & 0\\2\\3}\qquad\tableau{1.5&5.5\\3.5}\qquad\tableau{1.1}
\]
By the theorem, $ \gkd L(\lambda)=\binom{9}{2}-A(P(\lambda)) =\binom{9}{2}-\binom{3}{2}-\binom{2}{2}=36-3-1=32$.
\end{ex}


Set $ G=\su(p,q) $.  In this case, we have $ \rho=(-1,-2,\dots,-n) $, $ \beta=\epsilon_1-\epsilon_n $, and  $ \zeta=\sum_{i=1}^p\epsilon_i $.
 A necessary condition for $ L(\lambda) $ being  a $ (\mf{g},K) $-module  is that $ \lambda $ is a \textit{$ (p,q) $-dominant} weight, i.e. $ \lambda_i-\lambda_j\in\mathbb{Z}_{>0} $ for all $ 1\leq i<j\leq p $ and $ p+1\leq i<j\leq p+q $ where $ \lambdarho $.  Now we can concentrate on $\gkd L(\lambda) $ for a $ (p,q) $-dominant weight.

\begin{Thm}[See Thm. \ref{pr:pqw}] Assume that $ \lambda $ is  $ (p,q) $-dominant with $ \lambdarho $. Then
\begin{itemize}
\item [(i)] If $ \lambda_1-\lambda_{p+1}\in\mathbb{Z} $, i.e. $ \lambda $ is an integral weight, then $ P(\lambda) $ is a Young tableau with at most two columns. In this case we have $ \gkd L(\lambda)=m(n-m) $ where $ m $ is the number of entries in the second column of $ P(\lambda) $.
\item [(ii)] If $ \lambda_1-\lambda_{p+1}\notin\mathbb{Z} $, then $ P(\lambda) $ consists of two Young tableaux with single column. In this case we have $ \gkd L(\lambda)=pq $.
\end{itemize}
\end{Thm}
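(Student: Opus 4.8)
The plan is to feed the combinatorial formula $\gkd L(\lambda)=\binom{n}{2}-A(P(\lambda))$ from the preceding theorem into a direct analysis of the shapes of the tableaux produced by Schensted insertion, exploiting the rigidity imposed by $(p,q)$-dominance. The first step is to record the congruence structure: within the first block $\lambda_1,\dots,\lambda_p$ the consecutive differences are positive integers, so all $p$ entries lie in a single class modulo $\mathbb{Z}$, and likewise the $q$ entries of the second block lie in a single class. Whether these two classes coincide is governed exactly by whether $\lambda_1-\lambda_{p+1}\in\mathbb{Z}$, which is precisely the dichotomy of the theorem. So the two cases amount to ``one congruence class'' versus ``two congruence classes.''

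For part (i), integrality of $\lambda$ means all $n$ entries are congruent, so $P(\lambda)$ is the single tableau obtained by inserting the whole sequence $S=(\lambda_1,\dots,\lambda_n)$. I would invoke Schensted's theorem that the number of columns of $P(\lambda)$ (i.e.\ the length of its longest row) equals the length of a longest weakly increasing subsequence of $S$. Since $S$ is the concatenation of two strictly decreasing runs (the first $p$ entries and the last $q$ entries), and no two entries of a single strictly decreasing run can both sit in a weakly increasing subsequence, any such subsequence uses at most one entry from each run and hence has length at most $2$. Thus $P(\lambda)$ has at most two columns. Writing $m$ for the number of entries of the second column, the first column has $n-m$ entries, so $A(P(\lambda))=\binom{n-m}{2}+\binom{m}{2}$, and a short binomial computation gives $\binom{n}{2}-A(P(\lambda))=m(n-m)$.

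For part (ii), the hypothesis $\lambda_1-\lambda_{p+1}\notin\mathbb{Z}$ forces exactly two congruence classes, namely the first block and the second block, so $P(\lambda)$ consists of two tableaux. Each block is a strictly decreasing sequence, and Schensted insertion of a strictly decreasing sequence bumps every new entry down the first column, producing a single column; hence the two tableaux are single columns of lengths $p$ and $q$. Then $A(P(\lambda))=\binom{p}{2}+\binom{q}{2}$, and substituting into the formula yields $\binom{n}{2}-\binom{p}{2}-\binom{q}{2}=pq$ upon using $n=p+q$.

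The binomial identities are routine, and I expect no serious obstacle; the only points demanding care are fixing the correct normalization of Schensted insertion (rows weakly increasing, columns strictly increasing, bumping the leftmost strictly larger entry, as in the worked example) and handling entries that may be \emph{equal} across the two blocks when bounding the longest weakly increasing subsequence in case (i) --- equality is harmless because the bound only counts one entry per strictly decreasing run regardless. The genuinely conceptual step, and really the crux, is the translation of $(p,q)$-dominance into the statement that $S$ splits into two strictly decreasing runs inside each congruence class; this is exactly what collapses the general algorithm to the two shapes (at most two columns, respectively two single columns) that drive the two formulas.
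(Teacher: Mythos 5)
Your proposal is correct and takes essentially the same approach as the paper: the paper's own proof consists precisely of observing that the shape of $P(\lambda)$ follows from the definition of the insertion algorithm applied to the two strictly decreasing runs, and then citing the formula $\gkd L(\lambda)=\tfrac{n(n-1)}{2}-A(P(\lambda))$ of Theorem \ref{thm:main2}. Your write-up simply makes both steps explicit, via Schensted's longest-weakly-increasing-subsequence theorem for the shape claim and the routine binomial identities for the dimension count.
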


By this theorem, to determine the Gelfand-Kirillov  dimension of a highest weight $ (\mf{g},K) $-module, we only need to determine  the number $  m $ in (i).
We have
  a  combinatorial model 
  for computing $ m $ as follows. 
 
Let $ \lambda $ be an  integral and $ (p,q) $-dominant weight with $ \lambda+\rho=(\lambda_1,\cdots,\lambda_n) $. We associate to $ \lambda $ a line of white and black balls as follows.
Assume that we have $ n $ balls, which are filled with integers $ \lambda_i $, $ 1\leq i\leq n $. We paint a ball black (resp. white)
 if it contains $ \lambda_i $ for some $ 1\leq i\leq p $ (resp. $p+1\leq i\leq p+q=n  $). Then we arrange these balls in a line such that the entries in these balls weakly decrease and such that if $ \lambda_i=\lambda_j $ for some $ 1\leq i\leq p $, $ p+1\leq j\leq n $,   the white ball containing $ \lambda_j $ is on the left side of  the black ball containing $ \lambda_i $. 
We call two balls an adjacent white-black pair if they are adjacent with the left ball white and the right ball  black. We remove the  adjacent  white-black pairs from this line of balls over and over again until there are no adjacent white-black  pairs.
 
 \begin{Thm}[See Thm. \ref{pr:main4}]
 	Keep  notations as above. The number $ m $ of entries in the second column of $ P(\lambda) $ is precisely the number of adjacent white-black pairs that we can remove from this line of balls.
 \end{Thm}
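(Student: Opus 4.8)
The plan is to reduce the statement to a single ``min-cut'' quantity that can be read off from $\lambda+\rho$, and then to show that both the second-column length $m$ and the number of removable pairs equal this quantity. By the definition of $P(\lambda)$ for integral weights together with Theorem~\ref{pr:pqw}, we already know that for an integral $(p,q)$-dominant $\lambda$ the tableau $P(\lambda)$ is obtained by Schensted-inserting the sequence $(\lambda_1,\dots,\lambda_n)$ in order and has at most two columns, $m$ being the length of its second column. The $(p,q)$-dominance of $\lambda$ says precisely that this sequence is the concatenation of two strictly decreasing runs: the black run $\lambda_1>\cdots>\lambda_p$ followed by the white run $\lambda_{p+1}>\cdots>\lambda_n$. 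For a real number $c$ set
\[
f(c)=\#\{\,1\le i\le p:\lambda_i\le c\,\}+\#\{\,p+1\le j\le n:\lambda_j> c\,\}.
\]
I will show $m=\min_c f(c)$ and, separately, that the number of removable white-black pairs also equals $\min_c f(c)$.

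For the tableau side, I would invoke Schensted's theorem (the $k=1$ case of Greene's theorem): for the row-insertion convention with weakly increasing rows, the length of the first column of $P(\lambda)$ equals the length $d$ of a longest strictly decreasing subsequence of $(\lambda_1,\dots,\lambda_n)$. Since the shape is $(2^{m},1^{\,n-2m})$, its first column has $n-m$ boxes, whence $m=n-d$. It then remains to compute $d$ for a concatenation of two strictly decreasing runs: such a subsequence consists of a set $B$ of black entries followed by a set $W$ of white entries with $\min B>\max W$, so inserting a threshold $c$ between the two groups gives $d=\max_c\big(\#\{i\le p:\lambda_i> c\}+\#\{j>p:\lambda_j\le c\}\big)$; taking complements yields $m=n-d=\min_c f(c)$. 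Equal black and white values need no special treatment here, since such a pair can never lie in a common strictly decreasing subsequence.

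For the balls side, I would read the sorted line of balls from left (largest value) to right as a bracket word, each white ball an opening bracket and each black ball a closing bracket. Repeatedly deleting an adjacent white-black pair is exactly the reduction of this word to its irreducible form, so by the standard confluence of bracket reduction the number of deleted pairs is well defined and equals the number of matched pairs in the usual stack matching. A prefix/stack count then expresses this number as $\min_{0\le k\le n}\big(\#\{\text{black balls among the last }n-k\}+\#\{\text{white balls among the first }k\}\big)$, and rewriting each prefix as a value-threshold identifies it with $\min_c f(c)$. Combining the two computations gives that $m$ equals the number of removable pairs.

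The main obstacle is bookkeeping the equalities $\lambda_i=\lambda_j$ between a black and a white entry consistently across the three descriptions. This is exactly the role of the white-before-black tie-breaking rule used to order the balls: it must be matched, on the tableau side, by the append-on-tie behaviour of row insertion with weakly increasing rows, and, on the subsequence side, by the fact that equal-valued black and white entries cannot simultaneously occur in a strictly decreasing subsequence. Verifying that these three conventions produce the same value of $f$ at tie values (as one checks directly in the case $p=q=1$ with $\lambda_1=\lambda_2$) is the one place where genuine care is needed; once it is settled, the chain $m=\min_c f(c)=\#\{\text{removable pairs}\}$ closes.
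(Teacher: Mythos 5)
Your proof is correct, but it takes a genuinely different route from the paper's. The paper proves Theorem \ref{pr:main4}(i) by induction, using the recursive peeling of Proposition \ref{pr:main3}: each step $\lambda(i)\mapsto\lambda(i+1)$ simultaneously adds one box to the second column of the tableau and removes one adjacent white-black pair (discarding, in case (i), the trailing white balls that can never be matched), so the two counts stay equal throughout and both equal the number $l$ of steps. You instead identify each side with the single min-cut quantity $\min_c f(c)$: on the tableau side via Greene's/Schensted's theorem (first-column length $=$ longest strictly decreasing subsequence, which for a concatenation of two strictly decreasing runs is computed by a value threshold), and on the ball side via the standard bracket-matching prefix formula. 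Both routes are sound, and the tie-breaking issue you flag does close in general: every threshold $c$ is realized by the cut placed after all balls of value $>c$, whose prefix count is exactly $f(c)$ precisely because of the white-before-black rule, while any other cut's count dominates $f(v)$ or $f(v-\epsilon)$ for the value $v$ at which it sits, so the two minima coincide. What your approach buys is a closed, non-recursive formula $m=\min_c f(c)$, together with a transparent explanation of why the removal count is independent of the order of deletions (confluence of bracket reduction), and it bypasses the case analysis of Proposition \ref{pr:main3} entirely. What it costs is the appeal to Greene's theorem, a heavier external input than anything the paper uses: the paper's induction is self-contained in the insertion algorithm and, as a byproduct, also yields the recursion $G_{k+1}=G_k+\min\{\sum_{i=1}^{k+1}a_i-G_k,\,b_{k+1}\}$ of Theorem \ref{pr:main4}(ii), which your global argument does not produce.
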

 \begin{ex}
Let $ \lambda $ be a $ (5,5) $-dominant weight such that $$  \lambda+\rho=(5,4,3,2,1,9,8,7,6,2) . $$ Then the line of balls associated to $ \lambda $ is as follows:
\begin{center}
\begin{tikzpicture}

\node at ( 1.5,1) [circle,draw=black] {};
\node at ( 2,1) [circle,draw=black] {};
\node at ( 2.5,1) [circle,draw=black] {};

\node at ( 3,1) [circle,draw=black] {};
\node at ( 3.5,1) [circle,draw=black,fill=black!60]  {};

\node at ( 4,1) [circle,draw=black,fill=black!60] {};

\node at ( 4.5,1)[circle,draw=black,fill=black!60] {};

\node at ( 5,1) [circle,draw=black]  {};

\node at ( 5.5,1)[circle,draw=black,fill=black!60] {};

\node at ( 6,1) [circle,draw=black,fill=black!60] {};
\end{tikzpicture}
\end{center}
The number of adjacent white-black pairs that we  can remove from this line of balls is 5. Hence $ m=5 $, and $ \gkd L(\lambda) =5\times (10-5)=25$.
 \end{ex}

 From this combinatorial model, we can reprove a result of \cite{Bai-Hu} on Gelfand-Kirillov dimensions of  unitary highest weight modules (in the case $ G=\su(p,q) $); see Prop. \ref{pro:uhw}. Furthermore, we can prove the following result.

\begin{Thm}[See Thm. \ref{th:last}]\label{th:intr2}
Assume that $ G=SU(p,q) $ and  $ \tilde{\lambda} $ is a $ (p,q) $-dominant weight such that $ \bil{\tilde{\lambda}+\rho}{\beta} =0$. Then $\gkd L(\tilde{\lambda}+z\zeta) $ deceases to 0 as $ z $ increases in $ \mathbb{Z} $. And $ \gkd L(\tilde{\lambda}+z\zeta)=pq $ if $ z\in \mathbb{R}\setminus\mathbb{Z} $.
\end{Thm}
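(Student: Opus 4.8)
The plan is to reduce everything to the two explicit formulas already available: $\gkd L(\lambda)=m(n-m)$ in the integral case (Theorem~\ref{pr:pqw}(i), with $m$ read off from the ball model of Theorem~\ref{pr:main4}) and $\gkd L(\lambda)=pq$ in the non-integral case (Theorem~\ref{pr:pqw}(ii)). First I would determine which case applies to $\lambda=\tilde\lambda+z\zeta$. Writing $\lambda+\rho=(\lambda_1,\dots,\lambda_n)$ and using $\zeta=\sum_{i=1}^{p}\epsilon_i$, one gets $\lambda_i=\tilde\lambda_i+z$ for $1\le i\le p$ and $\lambda_i=\tilde\lambda_i$ for $p<i\le n$, so that $\lambda_1-\lambda_{p+1}=z+(\tilde\lambda_1-\tilde\lambda_{p+1})$. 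Since $\beta=\epsilon_1-\epsilon_n$, the hypothesis $\bil{\tilde{\lambda}+\rho}{\beta}=0$ reads $\tilde\lambda_1=\tilde\lambda_n$; and because $\tilde\lambda$ is $(p,q)$-dominant the entries $\tilde\lambda_{p+1},\dots,\tilde\lambda_n$ are mutually congruent modulo $\mathbb{Z}$, whence $\tilde\lambda_1\equiv\tilde\lambda_{p+1}\pmod{\mathbb{Z}}$ and $\tilde\lambda_1-\tilde\lambda_{p+1}\in\mathbb{Z}$. Thus $\lambda_1-\lambda_{p+1}\in\mathbb{Z}$ exactly when $z\in\mathbb{Z}$, and the second assertion follows at once: for $z\in\mathbb{R}\setminus\mathbb{Z}$ the weight is non-integral and Theorem~\ref{pr:pqw}(ii) gives $\gkd L(\tilde\lambda+z\zeta)=pq$.

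For $z\in\mathbb{Z}$ I would work entirely inside the ball model, where $\gkd L(\tilde\lambda+z\zeta)=m(z)\,(n-m(z))$ and $m(z)$ is the number of removable white--black pairs of the line attached to $\tilde\lambda+z\zeta$. The first observation I would record is that the removal process of Theorem~\ref{pr:main4} computes precisely the matching number of the bracket word obtained by scanning the line from left to right and recording each white ball as an opening bracket and each black ball as a closing bracket. If $h_k$ denotes the height after the first $k$ balls (white $=+1$, black $=-1$), then the number of unmatched black balls is the largest excess of blacks over whites along a prefix, so that $m(z)=p-\max\{0,-\min_{0\le k\le n}h_k\}$. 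The key dynamical point is that, as $z$ increases by $1$, the $p$ black entries $\tilde\lambda_i+z$ all rise by the same amount while the $q$ white entries stay fixed; hence the order among the blacks and the order among the whites never change, and every alteration of the sorted line is a black ball overtaking a white ball and moving to its left. I would therefore realize the passage from $z$ to $z+1$ as a finite sequence of adjacent transpositions, each converting a pattern \emph{white-then-black} into \emph{black-then-white}.

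Next I would analyze a single such transposition at adjacent positions $(j,j+1)$: it lowers the one partial sum $h_j$ by $2$ and leaves every other $h_k$ unchanged, so $\min_k h_k$ can only weakly decrease, $\max\{0,-\min_k h_k\}$ can only weakly increase, and $m(z)$ can only weakly decrease. Since any matching obeys $m\le\min(p,q)\le n/2$ and $x\mapsto x(n-x)$ is increasing on $[0,n/2]$, it follows that $m(z)(n-m(z))=\gkd L(\tilde\lambda+z\zeta)$ is weakly decreasing in $z\in\mathbb{Z}$. Finally, for $z$ large enough every black value exceeds every white value, so the sorted line consists of all blacks followed by all whites, the bracket word has no matched pair, $m(z)=0$, and $\gkd L(\tilde\lambda+z\zeta)=0$. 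Being a non-increasing sequence of nonnegative integers that is eventually $0$, it decreases to $0$, which is the first assertion.

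The step I expect to be the main obstacle is making the monotonicity rigorous. I would need to justify carefully that incrementing $z$ is realized only by transpositions of the one-sided type (blacks move left past whites, never the reverse), to treat the ties via the white-before-black convention at the integer values of $z$ where a black value meets a white value, and to verify the matching-number identity $m(z)=p-\max\{0,-\min_k h_k\}$ for the removal process of Theorem~\ref{pr:main4}. Once these combinatorial facts are secured, the remaining estimates are immediate.
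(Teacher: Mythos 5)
Your proposal is correct and follows essentially the same route as the paper's proof: reduce via Theorem \ref{pr:pqw} (the non-integral case giving $pq$, the integral case giving $m(n-m)$), show via the ball model of Theorem \ref{pr:main4} that $m$ is weakly decreasing in $z\in\mathbb{Z}$ and eventually $0$, and conclude using $m\le\min(p,q)\le n/2$ together with the monotonicity of $x\mapsto x(n-x)$ on $[0,n/2]$. The only difference is one of detail: the paper simply asserts the monotonicity of $m$ as evident from the combinatorial model, whereas you supply an explicit verification (the prefix-height/bracket-matching formula $m=p-\max\{0,-\min_k h_k\}$ and the realization of the passage $z\mapsto z+1$ as black-past-white adjacent transpositions), which correctly fills in that step.
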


 
 From Vogan \cite{Vogan-91}, we know that the associated variety of any highest weight $(\mathfrak{g},K)$-module is the closure of some $K_{\mathbb{C}}$ orbit in $\mathfrak{p}^+\cong (\mathfrak{g}/(\mathfrak{k}+\mathfrak{p}^+))^*$. The closures of  $K_{\mathbb{C}}$ orbits in $ \mathfrak{p}^+ $ form a linear chain of varieties:
 \begin{equation}\label{eq:chain}
 \{0\}={\bar{\mathcal{O}}}_0\subset \bar{\mathcal{O}}_1\subset ...\subset\bar{\mathcal{O}}_{r-1}\subset \bar{\mathcal{O}}_r=\mathfrak{p}^+,
 \end{equation}
 where $r$ is the rank of the Hermitian symmetric space $G/K$. In the case of 	$ G=\su(p,q) $, $ r=\min\{p,q\} $.
\begin{Thm}[See Thm. \ref{th:last2}] If $ G=\su(p,q) $ and $\lambda$ is an integral weight such that   $ L(\lambda)$ is a highest weight $(\mf{g},K)$-module, then the associated variety of $ L(\lambda)$  is the $ m $-th closure $\bar{\mathcal{O}}_m$  in \eqref{eq:chain}, where $ m $ is the number of entries in the second column of the Young tableau $ P(\lambda) $.
\end{Thm}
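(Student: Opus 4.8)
The plan is to pin down the index $k$ for which $AV(L(\lambda))=\bar{\mathcal{O}}_k$ purely by a dimension count. By Vogan's theorem recalled above, the associated variety is the closure of a single $K_{\mathbb{C}}$-orbit, so $AV(L(\lambda))=\bar{\mathcal{O}}_k$ for some $k\in\{0,1,\dots,r\}$ with $r=\min\{p,q\}$; equivalently, since the orbit closures form the chain \eqref{eq:chain}, any closed $K_{\mathbb{C}}$-stable subset equals $\bar{\mathcal{O}}_k$ for the largest orbit index it contains. The single numerical invariant I would use to determine $k$ is the Krull dimension, using the identity $\dim AV(L(\lambda))=\gkd L(\lambda)$. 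This identity holds because $L(\lambda)$ is a highest weight module: with respect to the triangular decomposition $\mathfrak{g}=\mathfrak{p}^{-}\oplus\mathfrak{k}\oplus\mathfrak{p}^{+}$, the subalgebra $\mathfrak{p}^{+}$ acts locally nilpotently, so $L(\lambda)$ is generated over $U(\mathfrak{p}^-)$ by a finite-dimensional $K$-type; choosing a $K$-stable good filtration, $\operatorname{gr}L(\lambda)$ is a finitely generated $S(\mathfrak{p}^-)\cong\mathbb{C}[\mathfrak{p}^+]$-module whose support is exactly $AV(L(\lambda))$, and the dimension of this support is the Gelfand-Kirillov dimension.

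Next I would compute $\dim\bar{\mathcal{O}}_k$ explicitly. For $G=\su(p,q)$ one identifies $\mathfrak{p}^+\cong M_{p\times q}(\mathbb{C})$ with $K_{\mathbb{C}}=S(GL_p\times GL_q)$ acting by left and right multiplication; the orbits are precisely the rank strata, $\mathcal{O}_k=\{X: \operatorname{rank}X=k\}$, so that $\bar{\mathcal{O}}_k$ is the determinantal variety of matrices of rank at most $k$. This realizes the chain \eqref{eq:chain} concretely, with $r=\min\{p,q\}$ being the maximal possible rank. The dimension of the rank-$\le k$ determinantal variety in $M_{p\times q}$ is classical, giving
\[
\dim\bar{\mathcal{O}}_k=k(p+q-k)=k(n-k).
\]

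Finally I would combine these facts with the earlier computation of the Gelfand-Kirillov dimension. By Theorem~\ref{pr:pqw}(i), for an integral $(p,q)$-dominant weight we have $\gkd L(\lambda)=m(n-m)$, where $m$ is the number of entries in the second column of $P(\lambda)$. Equating the two dimension expressions yields $k(n-k)=m(n-m)$. Both $k$ and $m$ lie in $[0,r]$: the orbit index satisfies $k\le r$ by construction, and $m\le r=\min\{p,q\}$ since by Theorem~\ref{pr:main4} each removed white--black pair consumes one of the $p$ black and one of the $q$ white balls. Because $r=\min\{p,q\}\le n/2$, both values lie in $[0,n/2]$, where the function $t\mapsto t(n-t)$ is strictly increasing and hence injective; therefore $k=m$, and $AV(L(\lambda))=\bar{\mathcal{O}}_m$ as claimed. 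I expect the main point requiring care to be the justification that $\dim AV(L(\lambda))=\gkd L(\lambda)$ in the precise Harish-Chandra sense of the associated variety in $\mathfrak{p}^+$ (as opposed to the characteristic variety in $\mathfrak{n}^*$); everything after that, including the strict-monotonicity argument, is elementary once the determinantal dimension formula is in hand.
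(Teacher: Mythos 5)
Your proof is correct, and its logical skeleton is the same as the paper's: invoke Vogan's result that the associated variety is some $\bar{\mathcal{O}}_k$ in the chain \eqref{eq:chain}, compute $\dim\bar{\mathcal{O}}_k=k(n-k)$, match this against $\gkd L(\lambda)=m(n-m)$ from Theorem \ref{pr:pqw}(i), and conclude $k=m$. The difference lies in how the two intermediate facts are sourced. For the orbit dimensions, the paper cites the general Hermitian-symmetric formula of \cite{Bai-Hu}, namely $\dim\bar{\mathcal{O}}_{k}=k\bil{\rho}{\beta}-k(k-1)C$, and then specializes $\bil{\rho}{\beta}=n-1$, $C=1$; you instead realize $\mathfrak{p}^+\cong M_{p\times q}(\mathbb{C})$ with $K_{\mathbb{C}}$ acting by left and right multiplication, so that the orbits are rank strata and $\bar{\mathcal{O}}_k$ is the classical determinantal variety of rank at most $k$, of dimension $k(p+q-k)$. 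Your route is more elementary and self-contained but specific to type $A$; the paper's citation works uniformly for all Hermitian symmetric pairs. You also make explicit two points that the paper's two-line deduction uses silently: first, the identity that the dimension of the associated variety equals $\gkd L(\lambda)$, which you justify correctly via the filtration coming from $L(\lambda)=U(\mathfrak{p}^-)F$ for the finite-dimensional highest $K$-type $F$ (the key point being $\mathfrak{p}^+F=0$, so $U_n(\mathfrak{g})F=U_n(\mathfrak{p}^-)F$ and the support of the associated graded module computes the Gelfand-Kirillov dimension); and second, the strict monotonicity of $t\mapsto t(n-t)$ on $[0,n/2]$ together with the bounds $k,m\le\min\{p,q\}\le n/2$, which is exactly what rules out the spurious solution $k=n-m$ of the equation $k(n-k)=m(n-m)$. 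Both additions are worthwhile, as the paper takes them for granted.
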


As pointed out by one of our  referees,  Garfinkle \cite{Garfinkle1993} also obtained    Gelfand-Kirillov dimensions and associated varieties of irreducible Harish-Chandra modules with trivial infinitesimal character for $ SU(p,q) $ by a different method. One of   advantages of our method in this paper is that we can uniformly and  directly obtain these invariants  from the highest weight of  any simple highest weight Harish-Chandra module. We thank the referee for pointing out  Garfinkle's work.

\section{Preliminaries on Gelfand-Kirillov Dimension}
In this section we recall the  definition and some properties of the Gelfand-Kirillov dimension.  The details can be found in \cite{Bo-Kr,Ja,Kr-Le,NOTYK,Vogan-78,Vogan-91}.

\begin{Defff}
Let $A$ be an algebra   generated  by a
finite-dimensional subspace $V$. Let $V^n$ denote the linear span of
all products of length   at most $n$ in elements of $V$. The
 {Gelfand-Kirillov dimension} of $A$  is defined by:
$$\gkd(A) =  \limsup_{n\rightarrow \infty} \frac{\log\mathrm{dim}( V^{n} )}{\log n}$$
\end{Defff}

 It is well-known that the above definition  is independent of  the choice of the
finite  dimensional  generating subspace $V$ (see \cite{Bo-Kr,Kr-Le}).
Clearly $\gkd(A)=0$ if and only if $\mathrm{dim}(A) <\infty$.
 
The notion of Gelfand-Kirillov dimension can be extended for left $A$-modules. In fact, we have the following definition.
\begin{Defff} Let $A$ be  an algebra   generated  by a finite-dimensional subspace $V$. Let $M$ be a left $A$-module generated  by a finite-dimensional subspace $M_{0}$.   
The {Gelfand-Kirillov dimension} $\gkd(M)$ of $M$  is defined by
$$\gkd(M) = \limsup_{n\rightarrow \infty}\frac{\log\mathrm{dim}( V^{n}M_{0} )}{\log n}.$$
\end{Defff}

%
%

\begin{Lem} Let $ \mf{g} $ be a finite dimensional Lie algebra, $M$ be a finitely generated $U(\mathfrak{g})$-module and $F$ be a finite dimensional  $U(\mathfrak{g})$-module. Then we have
	$\gkd(M\otimes F)=\gkd(M)$.
\end{Lem}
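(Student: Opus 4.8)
The plan is to prove the two inequalities $\gkd(M\otimes F)\le\gkd(M)$ and $\gkd(M\otimes F)\ge\gkd(M)$ separately, working with the diagonal $U(\mf{g})$-action on the tensor product over $\mathbb{C}$, namely $X\cdot(m\otimes f)=Xm\otimes f+m\otimes Xf$ for $X\in\mf{g}$. Throughout I would take $V=\mf{g}+\mathbb{C}\cdot 1$ as the generating subspace of $U(\mf{g})$, so that the filtration $V^0\subseteq V^1\subseteq\cdots$ is increasing, and I would fix a finite-dimensional generating subspace $M_0$ of $M$.

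First I would record two elementary facts. (a) The finite-dimensional subspace $M_0\otimes F$ generates $M\otimes F$ as a $U(\mf{g})$-module. This follows by induction on $n$, showing that $V^nM_0\otimes F$ lies in the submodule $N:=U(\mf{g})(M_0\otimes F)$: for $m\in V^nM_0$, $X\in\mf{g}$ and $f\in F$, the identity $Xm\otimes f=X\cdot(m\otimes f)-m\otimes Xf$ together with the inclusion $Xf\in F$ forces $Xm\otimes f\in N$, which pushes the induction from $n$ to $n+1$. In particular $M\otimes F$ is finitely generated, so its GK dimension may be computed from $M_0\otimes F$. (b) Since $F$ is a $U(\mf{g})$-module, each $X\in\mf{g}$ preserves $F$, hence $V^bF\subseteq F$ and $\dim V^bF\le\dim F$ for every $b$.

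For the upper bound, I would expand a product $X_1\cdots X_k$ of length $k\le n$ acting on $m\otimes f$ by the Leibniz rule: it is a sum of terms $(X_{i_1}\cdots X_{i_a}m)\otimes(X_{j_1}\cdots X_{j_b}f)$ with $a+b=k\le n$. Each such term lies in $V^aM_0\otimes V^bF\subseteq V^nM_0\otimes F$ by (b), whence
$$V^n(M_0\otimes F)\subseteq V^nM_0\otimes F,\qquad \dim V^n(M_0\otimes F)\le \dim(V^nM_0)\cdot\dim F.$$
Taking logarithms, dividing by $\log n$, and letting $n\to\infty$, the constant factor $\log\dim F$ disappears, giving $\gkd(M\otimes F)\le\gkd(M)$.

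The lower bound is the step I expect to carry the real content, since a priori $V^n(M_0\otimes F)$ could be much smaller than $V^nM_0\otimes F$. Here I would pass to the dual module $F^*$, which is again finite-dimensional, and use that the evaluation map $F\otimes F^*\to\mathbb{C}$, $f\otimes\varphi\mapsto\varphi(f)$, is $\mf{g}$-equivariant (onto the trivial module). Tensoring with $M$ yields a surjective $U(\mf{g})$-module map $(M\otimes F)\otimes F^*\twoheadrightarrow M$. Since GK dimension does not increase under quotients (the image of a generating set still generates, so each $\dim V^n$ can only drop) and, by the upper bound already proven but now applied with $F^*$ in place of $F$, one has $\gkd\big((M\otimes F)\otimes F^*\big)\le\gkd(M\otimes F)$, I would conclude
$$\gkd(M)\le\gkd\big((M\otimes F)\otimes F^*\big)\le\gkd(M\otimes F).$$
Combining the two inequalities gives $\gkd(M\otimes F)=\gkd(M)$. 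The only points needing care are the $\mf{g}$-equivariance of the contraction and the finite generation of $(M\otimes F)\otimes F^*$, both of which follow from fact (a) applied to the finite-dimensional module $F\otimes F^*$.
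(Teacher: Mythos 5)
Your proof is correct, but note that there is nothing in the paper to compare it against: the lemma is stated without proof, as one of the standard properties of Gelfand--Kirillov dimension whose details are deferred to the references cited at the head of that section (Krause--Lenagan, Jantzen, Vogan, et al.). What you have written is essentially the classical argument from those sources, carried out in full. The upper bound $\gkd(M\otimes F)\le\gkd(M)$ via the coproduct/Leibniz expansion, giving $V^n(M_0\otimes F)\subseteq V^nM_0\otimes F$, is sound, and so is the lower bound via the $\mf{g}$-equivariant evaluation $F\otimes F^*\to\mathbb{C}$, which exhibits $M$ as a quotient of $(M\otimes F)\otimes F^*$ and lets you recycle the upper bound with $F^*$ in place of $F$. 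You also correctly nail down the two points usually glossed over: that $M_0\otimes F$ generates $M\otimes F$ (so the module GK dimension may be computed from it at all), and that GK dimension cannot increase under quotients of finitely generated modules. The only implicit assumption is $F\neq 0$ (for $F=0$ the statement degenerates, since $M\otimes F=0$), a convention every source shares. In short: a correct, self-contained proof of a fact the paper only cites, following the standard route one would find in the quoted literature.
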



\section{\gks and Lustig's \af-functions}

\subsection{Translation functors}
We first recall   the category $ \msc{O} $ of a semisimple Lie algebra; see for example \cite{Hum08}.
Let $ \mf{g} $  be a semisimple Lie algebra over $ \mathbb{C} $  and  $ \mf{h} $  be a Cartan subalgebra. Let $ \Phi \subset \mf{h}^*$ be the root system of $ \mf{g} $ with respect to $ \mf{h} $.   Choose a set  $ \Phi^ +$ of positive roots. Let $ \Delta $ be the set of  simple roots  and let  $ \Phi^-=-\Phi^+ $. We have a triangular decomposition
\(\mf{g}=\mf{n}^-\oplus\mf{h}\oplus\mf{n}^+\)
where $ \mf{n}^+=\bigoplus_{\alpha\in\Phi^+}\mf{g}_\alpha $ and $ \mf{n}^-=\bigoplus_{\alpha\in\Phi^-}\mf{g}_\alpha $. Let $ \mf{b}^+=\mf{h}\oplus\mf{n}^+ $ (resp. $ \mf{b}^- =\mf{h}\op \mf{n}^-$) be the positive (resp. negative) Borel
subalgebra with respect to $ \Delta $.

Let $ (\,,\,):\mf{h}^*\times\mf{h}^* \rar \mathbb{C} $ be the bilinear form such that for all $ \al\in \Phi^+ $, $ \lambda(\al^{\vee})=\frac{2(\lambda,\al)}{(\al,\al)} $,
 where $ \lambda\in\mf{h}^* $ and $ \al^{\vee}\in\mf{h} $ is the coroot of $ \al $. As usual, we   write $\bil{\lambda}{\al}$ for $ \lambda(\al^{\vee}) $.
 The Weyl group $ W $ is generated by reflections $ s_\al $, $ \al\in\Phi $ where $ s_\al $ acts on $ \hs$ by \[
 s_\al(\lambda)=\lambda-\bil{\lambda}{\al}\al\quad\text{ for all } \lambda\in\hs.
 \]Then $ W $ is a Coxeter group with  $ S=\{s_\al\mid \al\in\Delta \} $ being the set of generators. Let $ \rho $ be the half sum of all positive roots. The shifted action \[
 w.\lambda:=w(\lambda+\rho)-\rho, w\in W,\lambda\in\hs
 \] is called the \textit{dot action} of $ W $ on $ \hs $. If we denote $ w_0 $ by the longest element of $ W $, then
 $ w_0.0=-2\rho $, and for any $ w\in W $ we have $ w.(-2\rho)=-w\rho-\rho $.

\newcommand{\Wsmall}{W_{[\lambda]}}

The  Verma module of $ \mf{g} $ with highest weight $ \lambda\in\hs $ is defined to be \[
M(\lambda):=U(\mf{g})\otimes_{U(\mf{b}^+)}\mathbb{C}_\lambda
\]where $ \mathbb{C}_\lambda $ is a $U(\mf{b}^+) $-module such that \[
h.m=\lambda(h)m,~~\mf{n}^+.m=0, \text{ for } h\in\hs,m\in\mathbb{C}_\lambda.
\]The Verma module $ M(\lambda) $ has a unique simple quotient, which is denoted by $ L(\lambda) $. These modules $ L(\lambda) $ with $ \lambda\in\hs $ are called the simple highest weight $ \mf{g} $-modules.

Let $ \Lambda=\{\lambda\in\hs\mid\bil{\lambda}{\al}\in\mathbb{Z}\text{ for all }\al\in\Phi^+ \} $, whose elements are called \textit{integral} weights. A weight $ \lambda\in\hs $ is called \textit{antidominant} if $ \bil{\lambda+\rho}{\al} \notin\mathbb{Z}_{>0}$ for all $ \al\in\Phi^+ $. Let   \begin{equation}\label{eq:phil}
\Phi_\lambda:=\{\al\in\Phi\mid s_\al.\lambda-\lambda\in\mathbb{Z}\al \}=\{\al\in\Phi\mid \bil{\lambda+\rho}{\al}\in\mathbb{Z} \},
\end{equation} 
and let $ \Wsmall $ be the subgroup of $ W $ generated by $ \{s_\al\mid \al\in\Phi_\lambda \} $. 
Note that $ \Wsmall $ is still a Coxeter group, but in general it is not  generated by a subset of $ S $. There is one and only one antidominant weight in $ \Wsmall.\lambda $. 

%

The category $ \msc{O} $ consists of $ \mf{g} $-modules which are \begin{itemize}
\item semisimple as $ \mf{h} $-modules;
\item finitely generated as $ U(\mf{g}) $-modules;
\item locally $ \mf{n} $-finite.
\end{itemize}
It is known that  $ \msc{O} $ is an abelian category which is both notherian and artinian, and has enough projective objects and injective objects. The simple objects of $ \msc{O} $ are precisely those $ L(\lambda) $ with $ \lambda\in\hs $.
A \textit{block} of $ \msc{O} $ is an indecomposable summand of $ \Ocat $ as an abelian subcategory. Let $ \Ocat_\lambda $  be the block containing  the simple module $ L(\lambda) $. Then we have $ \Ocat_\lambda=\Ocat_\mu $ if and only if $ \mu\in W_{[\lambda]}.\lambda $; equivalently, the simple modules in $ \Ocat_\lambda $ are precisely $ L(\mu) $, $ \mu\in\Wsmall.\lambda $. Since there is a unique antidominant  weight in $ \Wsmall.\lambda$, we can assume that $ \lambda $ is antidominant when considering a block $ \Ocat_\lambda $. By definition we have\[
\Ocat=\bigoplus_{\lambda\text{ antidominant}}\Ocat_\lambda.
\]
When $ \lambda=0 $, $ \Ocat_\lambda=\Ocat_0 $ is called the principal block. The simple modules in $ \Ocat_0 $  are denoted by \[
L_w:=L(w.(-2\rho)), w\in W.
\]

In what follows, we recall Jantzen's translation functors which are used to compare two blocks. Here we only consider the case of  integral weights, which is enough for the purpose of this paper.

Let $ E:= \mathbb{Z}\Phi\otimes_\mathbb{Z}\mathbb{R} $. It is a Euclidean space, on which the bilinear form  is just the restriction to $ E $ of the bilinear form $ (\,,\,) $ on $ \mf{h}^* $ defined above. One can see that $ \Lambda\subset E $. The Weyl group $ W $ also acts on $ E $ by dot action. Consider the hyperplanes $ H_\al=\{\lambda\in E\mid\bil{\lambda+\rho}{\al} =0\} $, $ \al\in\Phi^+ $.
These hyperplanes give rise to some facets. Precisely, a \textit{facet} $ F $ is a maximal subset of $ E $ such that for any $ \lambda,\mu\in F $, $ \bil{\lambda+\rho}{\al}=0 $ (resp. $ <0 $, or $ >0 $) if and only if $ \bil{\mu+\rho}{\al}=0 $ (resp. $ <0 $, or	 $ >0 $) for all $ \al\in\Phi^+ $.
The facets of maximal dimension are the connected components of $ E\setminus\bigcup_{\al\in\Phi^+}H_\al $, which are called  chambers. Denote by $ C^+ $ (resp. $ C^- $) the chamber consisting of $ \lambda $ such that $ \bil{\lambda+\rho}{\al} >0$ (resp. $ <0 $) for all $ \al\in\Phi^+ $.

For a facet $ F $ with $ \lambda\in F $, let $ \Phi_{>0}(F) =\{\al\in\Phi^+\mid \bil{\lambda+\rho}{\al} >0 \}$, $ \Phi_{=0}(F) =\{\al\in\Phi^+\mid \bil{\lambda+\rho}{\al} =0 \}$, $ \Phi_{<0}(F) =\{\al\in\Phi^+\mid \bil{\lambda+\rho}{\al} <0 \}$. The \textit{upper closure} of $ F $ is defined to be the set of $ \mu $ such that \[
\bil{\mu+\rho}{\al}>0 \text{ for }\al\in\Phi_{>0}(F),
\]
\[
\bil{\mu+\rho}{\al}=0 \text{ for }\al\in\Phi_{=0}(F),
\]\[\text{and }
\bil{\mu+\rho}{\al}\leq0 \text{ for }\al\in\Phi_{<0}(F).
\]
For example, the upper closure of $C^-  $ is the closure $ \overline{C^-} $, and the upper closure of $ C^+ $ is $ C^+ $ itself.

\begin{Lem}\label{lem:min-char}
The space $ E $ is the disjoint union of upper closures of chambers. For any integral weight $ \lambda\in\Lambda $, there is a unique $ w\in W $ such that $ \lambda $ is in the upper closure of the chamber $ w.C^- $ containing $ w.(-2\rho) $. Moreover, the element $ w $ here is characterized as the unique element of $ W $ of minimal length such that $ w^{-1}.\lambda$ is antidominant. Generally, for any weight $ \lambda\in \hs $, there is a unique element $ w \in \Wsmall $ of minimal length  such that $ w^{-1}.\lambda $ is antidominant.
\end{Lem}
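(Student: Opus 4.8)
The plan is to prove the three assertions in order: the tiling of $E$ by upper closures of chambers, the existence and uniqueness of the chamber $w.C^-$ whose upper closure contains a given integral $\lambda$, and the identification of that $w$ with the shortest element making $w^{-1}.\lambda$ antidominant; the non-integral case is then obtained by transporting the same argument into $\Wsmall$.

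First I would prove that $E$ is tiled by the upper closures of the chambers. Disjointness is sign-pattern bookkeeping: if $\mu$ lies in the upper closures of two chambers $C,C'$, then for each $\al\in\Phi^+$ the defining inequalities force $\al\in\Phi_{>0}(C)$ exactly when $\bil{\mu+\rho}{\al}>0$ and $\al\in\Phi_{<0}(C)$ otherwise (the $\Phi_{<0}$ condition only asks for $\leq 0$, so it absorbs the zeros), and likewise for $C'$; hence $C$ and $C'$ carry the same sign pattern and coincide. For existence, given $\mu$ set $\mu_t=\mu-t\rho$. Since $\bil{\rho}{\al}>0$ for all $\al\in\Phi^+$, for all small $t>0$ the roots with $\bil{\mu+\rho}{\al}\neq 0$ retain their sign while those with $\bil{\mu+\rho}{\al}=0$ become strictly negative; thus $\mu_t$ avoids every $H_\al$ and lies in a unique chamber $C$, and $\mu$ visibly satisfies the upper-closure inequalities of $C$. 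As $\bil{-2\rho+\rho}{\al}=-\bil{\rho}{\al}<0$, we have $-2\rho\in C^-$, so $w.(-2\rho)\in w.C^-$; since $W$ permutes the chambers simply transitively, this $C$ equals $w.C^-$ for a unique $w\in W$.

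Next, for integral $\lambda$, I would show this $w$ is the minimal-length element with $w^{-1}.\lambda$ antidominant. Writing $\lambda+\rho=w(w^{-1}.\lambda+\rho)$ and running the upper-closure inequalities of $w.C^-$ through each $\delta\in w\Phi^+$ yields $\bil{\lambda+\rho}{\delta}\leq 0$ for all such $\delta$, that is $\bil{w^{-1}.\lambda+\rho}{\gamma}\leq 0$ for all $\gamma\in\Phi^+$, so $w^{-1}.\lambda$ is antidominant. Because a dot-orbit contains a unique antidominant weight (recalled above), $\{w':(w')^{-1}.\lambda\ \text{antidominant}\}$ is the single left coset $wW_J$, where $W_J=\mathrm{Stab}(w^{-1}.\lambda)$. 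By Steinberg's theorem $W_J$ is generated by the $s_\al$ with $\bil{w^{-1}.\lambda+\rho}{\al}=0$; antidominance places $w^{-1}.\lambda+\rho$ in the closure of the antidominant chamber, so $W_J$ is the standard parabolic with $J=\{\al\in\Delta:\bil{w^{-1}.\lambda+\rho}{\al}=0\}$, whence $wW_J$ has a unique shortest representative. Finally our $w$ is that representative: for $\be\in\Phi_J^+$ one has $\bil{\lambda+\rho}{w\be}=\bil{w^{-1}.\lambda+\rho}{\be}=0$, and if $w\be$ were negative the upper-closure inequality applied to $-w\be\in\Phi^+$ (whose $w^{-1}$-image $-\be$ is negative) would give $\bil{\lambda+\rho}{-w\be}>0$, a contradiction; thus $w(\Phi_J^+)\subseteq\Phi^+$, which characterizes the minimal-length coset representative.

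For a general $\lambda$ the same scheme runs inside the Coxeter group $\Wsmall$, equipped with its own simple system (the simple roots of $\Phi_\lambda$), positive system $\Phi_\lambda^+=\Phi_\lambda\cap\Phi^+$, and length function. The unique antidominant $\nu\in\Wsmall.\lambda$ satisfies $\bil{\nu+\rho}{\al}\leq 0$ for all $\al\in\Phi_\lambda^+$, so $\nu+\rho$ lies in the closure of the antidominant chamber of $\Wsmall$ and its $\Wsmall$-stabilizer is a standard parabolic subgroup of $\Wsmall$; the set $\{w\in\Wsmall:w^{-1}.\lambda\ \text{antidominant}\}$ is one left coset of it and hence has a unique minimal-length element, giving the last assertion. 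The technical heart, and the only real obstacle, is this identification of the dot-stabilizer with a \emph{standard parabolic} subgroup (of $W$ in the integral case, of $\Wsmall$ in general), since that is exactly what secures uniqueness of the shortest representative; it rests on Steinberg's theorem together with the placement of the antidominant weight in the closure of the fundamental chamber. The remaining steps, namely the perturbation $\mu-t\rho$ and the translation between the upper-closure inequalities and antidominance, are routine once the strict versus non-strict inequalities are tracked carefully.
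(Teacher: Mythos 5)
The paper never proves this lemma: it is quoted as standard background (of the kind found in Jantzen's \emph{Moduln mit einem h\"ochsten Gewicht} or Humphreys' book on category $\msc{O}$), and is then used to deduce Corollary \ref{cor:translation}. So there is no internal argument to compare yours against; what matters is whether your blind proof is sound, and it essentially is. The tiling of $E$ by upper closures (disjointness by reading the sign pattern off $\mu$, existence by perturbing to $\mu-t\rho$ so the walls through $\mu$ tip to the strictly negative side), the translation of the upper-closure inequalities for $w.C^-$ into antidominance of $w^{-1}.\lambda$, the identification of $\{w' : (w')^{-1}.\lambda \text{ antidominant}\}$ with the single coset $wW_J$ of the dot-stabilizer, and the verification $w(\Phi_J^+)\subseteq\Phi^+$ pinning down $w$ as the minimal coset representative are all correct. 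The one step where your citation does real work is the claim that the dot-stabilizer is a \emph{standard} parabolic of $W_{[\lambda]}$ via Steinberg's fixed-point theorem: Steinberg only gives generation by the reflections \emph{of} $W_{[\lambda]}$ fixing $\nu+\rho$, so you additionally need (a) that every reflection lying in $W_{[\lambda]}$ is of the form $s_\al$ with $\al\in\Phi_\lambda$ --- true, e.g.\ because each $u\in W_{[\lambda]}$ moves $\lambda+\rho$ by an element of $\mathbb{Z}\Phi_\lambda$ and roots are primitive in the root lattice, but this is not automatic --- and (b) that the theorem applies on the complex space $\hs$ where $\nu+\rho$ lives when $\lambda$ is not integral. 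Both points can be bypassed by the usual length induction, which I would recommend substituting: if $u\in W_{[\lambda]}$ fixes $\nu+\rho$ and $u\neq e$, pick a simple root $\al$ of $\Phi_\lambda^+$ with $u\al\in\Phi_\lambda^-$; then $\bil{\nu+\rho}{\al}=\bil{\nu+\rho}{u\al}$ is both $\leq 0$ and $\geq 0$ by antidominance, hence $0$, so $s_\al$ fixes $\nu+\rho$ and $l(us_\al)<l(u)$, and induction places $u$ in the standard parabolic $W_J$. With that replacement (or with (a) and (b) made explicit), your proof is complete and supplies exactly what the paper leaves to the literature.
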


For $ \lambda,\mu \in\Lambda$, we set $ \gamma=\mu-\lambda $. We can find a weight $ \bar{\gamma} \in W\gamma$ such that
$ \bil{\bar{\gamma}}{\al} \geq0$ for all $ \al \in\Phi^+$. Then $ L(\bar{\gamma}) $ is finite dimensional. The Jantzen's \textit{translation functor} $$  T_\lambda^\mu :\Ocat_\lambda\rar\Ocat_\mu $$ is   an exact functor given by $ T_\lambda^\mu(M):={\mr Pr}_\mu (L(\bar{\gamma})\otimes M) $, where $ M\in\msc{O}_\lambda $ and $ {\mr Pr}_\mu $ is the natural projection $ \Ocat\rar\Ocat_\mu $.
From \cite[\S2.10-2.11]{Jantzen79Lec750}, we have the following lemma.

\begin{Lem}\label{lem:translation}
Let $ \lambda,\mu\in\Lambda $.
\begin{itemize}
\item [(i)] $ T_\lambda^\mu $ is biadjoint to $ T_\mu^\lambda $.

\item [(ii)] If $ \lambda\in\Lambda $ is in a facet $ F $ and $ \mu \in\Lambda$ is in the closure $ \bar{F} $, then \[
T_{\lambda}^\mu(L(\lambda))\cong\begin{cases}
L(\mu),& \text{ if }\mu\text{ is in the upper closure of }F;\\
0,&\text{ otherwise}.
\end{cases}
\]
\end{itemize}
\end{Lem}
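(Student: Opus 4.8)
The plan is to establish the two parts by quite different means: part (i) is formal, following from the biadjointness of tensoring with a finite-dimensional module together with a short duality computation, whereas part (ii) is the substantive assertion and rests on the action of $ T_\lambda^\mu $ on Verma modules together with the geometry of facets, exactly as in \cite[\S2.10--2.11]{Jantzen79Lec750}.

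For (i), I would start from the standard fact that for a finite-dimensional $ U(\mf{g}) $-module $ F $ the exact functor $ -\otimes F $ is biadjoint to $ -\otimes F^{*} $, the unit and counit coming from the evaluation and coevaluation maps $ F^{*}\otimes F\rar\mathbb{C} $ and $ \mathbb{C}\rar F\otimes F^{*} $. Taking $ F=L(\bar\gamma) $, I would check the duality $ L(\bar\gamma)^{*}\cong L(\overline{-\gamma}) $: indeed $ L(\bar\gamma)^{*}\cong L(-w_0\bar\gamma) $, and since $ \bar\gamma $ is dominant the weight $ -w_0\bar\gamma $ is the dominant representative of $ W(-\gamma) $, i.e.\ $ -w_0\bar\gamma=\overline{-\gamma} $, which is exactly the finite-dimensional module used to define $ T_\mu^\lambda $. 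As the block projections $ {\mr Pr}_\lambda,{\mr Pr}_\mu $ are exact and self-adjoint (each splits off a direct summand of $ \Ocat $), composing the adjunctions for $ -\otimes L(\bar\gamma) $ with these projections produces natural isomorphisms $ \hom(T_\lambda^\mu M,N)\cong\hom(M,T_\mu^\lambda N) $ and $ \hom(N,T_\lambda^\mu M)\cong\hom(T_\mu^\lambda N,M) $, which is the asserted biadjointness.

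For (ii), I would first record the behaviour of $ T_\lambda^\mu $ on Verma modules. The module $ M(\lambda)\otimes L(\bar\gamma) $ admits a Verma filtration with subquotients $ M(\lambda+\nu) $, $ \nu $ ranging over the weights of $ L(\bar\gamma) $ with multiplicity, and applying $ {\mr Pr}_\mu $ keeps those factors with $ \lambda+\nu\in\Wsmall.\mu $; since $ \mu\in\bar F $ and $ \lambda\in F $, a weight count shows only the extremal weight $ \nu=\gamma=\mu-\lambda $ survives, giving $ T_\lambda^\mu M(\lambda)\cong M(\mu) $. Applying the exact functor $ T_\lambda^\mu $ to the surjection $ M(\lambda)\twoheadrightarrow L(\lambda) $ exhibits $ T_\lambda^\mu L(\lambda) $ as a quotient of $ M(\mu) $, hence either $ 0 $ or a module with simple head $ L(\mu) $; dually, since $ T_\lambda^\mu $ commutes with the duality on $ \Ocat $, applying it to $ L(\lambda)\hookrightarrow M(\lambda)^{\vee} $ shows $ T_\lambda^\mu L(\lambda) $ embeds in $ M(\mu)^{\vee} $ and so has simple socle $ L(\mu) $ when nonzero. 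Because $ [M(\mu):L(\mu)]=1 $, these two facts force $ T_\lambda^\mu L(\lambda) $ to be either $ 0 $ or $ L(\mu) $.

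It remains to decide which alternative occurs, and this is where I expect the main obstacle to lie. By the adjunction of (i), $ T_\lambda^\mu L(\lambda)\neq0 $ if and only if $ \hom(T_\lambda^\mu L(\lambda),L(\mu))\cong\hom(L(\lambda),T_\mu^\lambda L(\mu)) $ is nonzero, i.e.\ if and only if $ L(\lambda) $ appears in the head of the module $ T_\mu^\lambda L(\mu) $ obtained by translating \emph{out of} the wall. The composition factors of $ T_\mu^\lambda L(\mu) $, and in particular whether $ L(\lambda) $ sits in its head, are governed by the position of $ \mu $ relative to the walls $ H_\al $, $ \al\in\Phi_{>0}(F) $: using Lemma \ref{lem:min-char} and the definition of the upper closure, one shows that $ L(\lambda) $ occurs exactly when $ \mu $ lies in the upper closure of $ F $, and that when $ \mu $ meets an extra wall from the positive side the relevant $ \hom $-space vanishes, forcing $ T_\lambda^\mu L(\lambda)=0 $. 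Carrying out this weight bookkeeping cleanly—verifying that no spurious factors appear and matching the nonvanishing precisely to the upper-closure condition—is the delicate step, and it is the combinatorial heart of \cite[\S2.10--2.11]{Jantzen79Lec750}.
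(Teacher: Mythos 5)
The paper does not actually prove this lemma: it is imported verbatim from \cite[\S 2.10--2.11]{Jantzen79Lec750}, so there is no in-paper argument to match yours against, and your proposal has to be judged as a reconstruction of the standard proof in Jantzen (equivalently, Humphreys, Ch.~7). Your formal steps are sound and are indeed the standard ones: in (i), biadjointness of $-\otimes F$ and $-\otimes F^{*}$, the identification $L(\bar{\gamma})^{*}\cong L(\overline{-\gamma})$ for dominant $\bar{\gamma}$, and the exactness/self-adjointness of the block projections give exactly the claim; in (ii), the Verma filtration argument giving $T_\lambda^\mu M(\lambda)\cong M(\mu)$, and the head/socle plus $[M(\mu):L(\mu)]=1$ argument, correctly pin $T_\lambda^\mu L(\lambda)$ down to $0$ or $L(\mu)$. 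Two caveats on details: the ``weight count'' showing that only $\nu=\mu-\lambda$ survives the projection is itself a nontrivial combinatorial lemma (it is where the hypothesis $\mu\in\bar{F}$ enters), and a nonzero element of $\hom(L(\lambda),T_\mu^\lambda L(\mu))$ detects $L(\lambda)$ in the \emph{socle} of $T_\mu^\lambda L(\mu)$, not its head --- for the head you want the other adjunction, $\hom(T_\mu^\lambda L(\mu),L(\lambda))$. The latter slip is harmless given the dichotomy already established.

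The genuine gap is that the decisive step of (ii) --- that nonvanishing of $T_\lambda^\mu L(\lambda)$ occurs precisely when $\mu$ lies in the upper closure of $F$ --- is announced rather than proved: ``using Lemma \ref{lem:min-char} and the definition of the upper closure, one shows \dots'' is a statement of the goal, not an argument, and the vanishing when $\mu$ sits on an extra wall $H_\alpha$ with $\alpha\in\Phi_{>0}(F)$ requires a real analysis of the composition factors (or socle) of the out-of-wall translate $T_\mu^\lambda L(\mu)$. So, as a self-contained proof, your proposal is incomplete exactly at its mathematical heart. In fairness, this is precisely the part the paper itself delegates to the citation, so your write-up is no less complete than the paper's treatment; its merit is that it correctly isolates the formal scaffolding (which you do execute) from the one step where the upper-closure geometry genuinely intervenes.
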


\begin{Cor}\label{cor:translation}
Let $ \lambda\in\Lambda $. We have\[
T_{w.(-2\rho)}^\lambda (L_w)=L(\lambda)
\]where $ w\in W $ is the unique element of minimal length such that $ w^{-1}.\lambda $ is antidominant.
\end{Cor}
\begin{proof}
By Lemma \ref{lem:min-char}, $ \lambda $ is in the upper closure of the facet that contains $ w.(-2\rho) $. Then the corollary follows from Lemma \ref{lem:translation} (ii).
  \end{proof}

\begin{Lem}\label{lem:upper}
Let $ \lambda\in\Lambda $ be an element of a facet $ F $ and $ \mu \in\Lambda$ is in the upper closure of $ F $, then\[
\gkd L(\lambda)=\gkd L(\mu).
\]
\end{Lem}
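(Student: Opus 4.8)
The plan is to prove the two inequalities $\gkd L(\mu)\le \gkd L(\lambda)$ and $\gkd L(\lambda)\le \gkd L(\mu)$ separately, using the translation functor $T_\lambda^\mu$ together with its biadjoint $T_\mu^\lambda$ from Lemma \ref{lem:translation}, and two elementary facts about GK dimension: that tensoring a finitely generated $U(\mf{g})$-module with a finite-dimensional one leaves the GK dimension unchanged (the last lemma of Section 2), and that the GK dimension of a subquotient, in particular of a direct summand, is bounded above by that of the ambient module (a standard property, see \cite{Kr-Le}). Throughout I set $\gamma=\mu-\lambda$ and recall that $T_\lambda^\mu(M)=\mathrm{Pr}_\mu(L(\bar\gamma)\otimes M)$ with $L(\bar\gamma)$ finite-dimensional, and symmetrically $T_\mu^\lambda(N)=\mathrm{Pr}_\lambda(L(\overline{-\gamma})\otimes N)$.

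For the first inequality I would argue as follows. Since $\lambda\in F$ and $\mu$ lies in the upper closure of $F$, hence in the closure $\bar F$, Lemma \ref{lem:translation}(ii) gives $T_\lambda^\mu(L(\lambda))=L(\mu)$. Because $\Ocat=\bigoplus_\nu\Ocat_\nu$ and $\mathrm{Pr}_\mu$ is the projection onto the summand $\Ocat_\mu$, the module $L(\mu)=\mathrm{Pr}_\mu(L(\bar\gamma)\otimes L(\lambda))$ is a direct summand of $L(\bar\gamma)\otimes L(\lambda)$. Hence $\gkd L(\mu)\le \gkd(L(\bar\gamma)\otimes L(\lambda))=\gkd L(\lambda)$, the last equality being the tensor invariance.

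The reverse inequality is the subtle step and is where I expect the main work to lie; here I would exploit biadjointness rather than attempt to identify $T_\mu^\lambda(L(\mu))$ with $L(\lambda)$ directly, since $\lambda$ need not lie in the upper closure of the facet of $\mu$ and so Lemma \ref{lem:translation}(ii) does not apply in that direction. As $T_\lambda^\mu$ is left adjoint to $T_\mu^\lambda$ by Lemma \ref{lem:translation}(i), there is a unit morphism $\eta\colon \mathrm{id}\to T_\mu^\lambda T_\lambda^\mu$, and evaluating at $L(\lambda)$ yields $\eta_{L(\lambda)}\colon L(\lambda)\to T_\mu^\lambda T_\lambda^\mu(L(\lambda))=T_\mu^\lambda(L(\mu))$. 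Under the adjunction bijection $\hom(T_\lambda^\mu L(\lambda),L(\mu))\cong \hom(L(\lambda),T_\mu^\lambda L(\mu))$, the identity of $L(\mu)=T_\lambda^\mu L(\lambda)$ corresponds precisely to $\eta_{L(\lambda)}$, so $\eta_{L(\lambda)}\neq 0$. The nonvanishing of this unit is the only genuinely delicate point; everything else is formal. Since $L(\lambda)$ is simple, a nonzero map out of it is injective, so $L(\lambda)$ embeds into $T_\mu^\lambda(L(\mu))$.

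Finally I would conclude by combining the monotonicity and tensor facts once more. From the embedding, $\gkd L(\lambda)\le \gkd T_\mu^\lambda(L(\mu))$, and since $T_\mu^\lambda(L(\mu))=\mathrm{Pr}_\lambda(L(\overline{-\gamma})\otimes L(\mu))$ is a direct summand of $L(\overline{-\gamma})\otimes L(\mu)$, we get $\gkd T_\mu^\lambda(L(\mu))\le \gkd(L(\overline{-\gamma})\otimes L(\mu))=\gkd L(\mu)$. Thus $\gkd L(\lambda)\le \gkd L(\mu)$, and together with the first inequality the two GK dimensions coincide.
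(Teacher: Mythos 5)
Your proof is correct and is essentially the same as the paper's: both directions use Lemma \ref{lem:translation}(ii) to identify $T_\lambda^\mu L(\lambda)$ with $L(\mu)$, the biadjointness in Lemma \ref{lem:translation}(i) to produce a nonzero (hence injective) map $L(\lambda)\to T_\mu^\lambda L(\mu)$, and the facts that translation functors cannot increase GK dimension. Your formulation via the unit of the adjunction is just a more explicit rendering of the paper's computation $\hom(L(\lambda),T_\mu^\lambda L(\mu))=\hom(T_\lambda^\mu L(\lambda),L(\mu))\neq 0$.
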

\begin{proof}
By   definition, we have $ \gkd T_\lambda^\mu L(\lambda)\leq \gkd L(\lambda)$ and $ \gkd T^\lambda_\mu L(\mu)\leq \gkd L(\mu)$.

By Lemma \ref{lem:translation} (ii), $ \gkd L(\mu)=\gkd T_\lambda^\mu L(\lambda)\leq \gkd L(\lambda) $. By  Lemma \ref{lem:translation} (i), we have $ \hom(L(\lambda),T_\mu^\lambda L(\mu) )=\hom(T_\lambda^\mu L(\lambda),L(\mu ))=\hom (L(\mu),L(\mu))\neq0$, which implies that $ \gkd L(\lambda)\leq \gkd T_\mu^\lambda L(\mu) \leq \gkd L(\mu)$. Then the lemma follows.
  \end{proof}

\subsection{Lusztig's \af-function}\label{sec:cell}

Recall that the Weyl group $ W  $ of $ \mf{g} $ is a Coxeter group generated by $ S=\{s_\al\mid\al\in\Delta \} $. Then we have a Hecke algebra $ \mc{H} $ over $ \mc{A} :=\mathbb{Z}[v,v^{-1}]$, which is generated by $ T_w $, $ w\in W $ with relations \[
T_wT_{w'}=T_{ww'} \text{ if }l(ww')=l(w)+l(w'),
\]\[
\text{and }(T_s+v^{-1})(T_s-v)=0 \text{ for any }s\in S.
\]

If $ \lambda\in\hs $, associated to the Weyl group $ \Wsmall $ of $ \mf{g}_\lambda $, we have a Hecke algebra $ \mc{H}_{[\lambda]} $. One need to note that in general $ \mc{H}_{[\lambda]} $ is not a subalgebra of $  \mc{H}$, since $ \Wsmall$ is not necessarily generated by a subset of $ S $.

The Kazhdan-Lusztig basis $ C_w $, $ w\in W $  of $ \mc{H} $ are characterized as the unique elements $ C_w $ such that\[
\overline{C_w}=C_w,\qquad C_w\equiv T_w \mod{\mc{H}_{<0}}
\]where $ \bar{\,} :\mc{H}\rar\mc{H}$ is the bar involution such that $ \bar{q}=q^{-1} $, $ \overline{T_w} =T_{w^{-1}}^{-1}$, and $ \mc{H}_{<0}=\bigoplus_{w\in W}\mc{A}_{<0}T_w $, $ \mc{A}_{<0}=v^{-1}\mathbb{Z}[v^{-1}] $.

If $ C_y $ occurs in the expansion of $ hC_w $ with respect to the KL-basis for some $ h\in\mc{H} $, then we write $ y\leftarrow_L w $. And we extend $ \leftarrow_L $ to a preorder $ \prec_L $ on $ W $. Define the equivalence relation $ \sim_L $ by that $ x\sim_L y $ if and only if $ x\prec_L y $ and $ y\prec_L x $. An equivalence class of $ \sim_L $ on $ W $ is called a  \textit{left cell} of $ W $. We define $ x\prec_R y $ by $ x^{-1}\prec_L y^{-1} $ and define $ x\sim_{R} y $ by $ x^{-1}\sim_L y^{-1} $. The equivalence class of $ \sim_R $ is called a \textit{right cell}. Define $ \prec_{LR} $ to be the preorder generated by $ \prec_L $ and $ \prec_R $. And we have  an equivalence relation $ \sim_{LR} $ on $ W $ corresponding to $ \prec_{LR} $; a   equivalence class of $ \sim_{LR} $ is called a two-sided cell of $ W $.

Let $ C_xC_y=\sum_{z\in W} h_{x,y,z}C_z $ with $ h_{x,y,x}\in\mc{A} $. Then  \textit{Lusztig's \af-function} $ \mathbf{a}:W\rar\mathbb{N} $ is defined by\[
\aff(z)=\max\{\deg h_{x,y,z}\mid x,y\in W \} \text{ for } z\in W.
\]

We   state a lemma which will be used later.

\begin{Lem}\label{lem:Hecke}
\begin{itemize}
\item [(i)] The equivalence relation $ \sim_{LR} $ is    generated by $ \sim_L $ and $ \sim_R $.
\item [(ii)]$ \aff(w)=\aff(w^{-1}) $ for all $ w\in W $.
\item [(iii)] $ \aff:W\rar\mathbb{N} $ is constant on each two-sided cell of  $ W $.
\item [(iv)] If $ w_I $ is the longest element of the parabolic subgroup of $ W $ generated by a subset $ I\subset S $, the $ \aff(w_I)$ is equal to  the length $l(w_I) $ of $ w_I $.
\item [(v)] If $ W $ is a direct product of  Coxeter subgroups $ W_1 $ and $ W_2 $, then\[
\aff(w)=\aff(w_1)+\aff(w_2)
\]for  $ w=(w_1,w_2) \in W_1\times W_2=W$.
\end{itemize}
\end{Lem}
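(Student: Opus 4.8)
The plan is to read all five assertions as standard properties of the Kazhdan--Lusztig basis and of Lusztig's $\aff$-function, and to obtain them from the structure constants $h_{x,y,z}$ defined by $C_xC_y=\sum_z h_{x,y,z}C_z$ together with the two basic symmetries of $\mc{H}$; I would dispatch the computational statements (ii), (v) and (iv) first and keep the cell-theoretic statements (iii) and (i) for last. For (ii) I would use the $\mc{A}$-linear algebra anti-automorphism $\flat\colon\mc{H}\to\mc{H}$ with $T_w^\flat=T_{w^{-1}}$. It preserves $\mc{H}_{<0}$ and commutes with the bar involution, so $C_w^\flat$ is bar-invariant and congruent to $T_{w^{-1}}$ modulo $\mc{H}_{<0}$; the characterization of the KL basis then forces $C_w^\flat=C_{w^{-1}}$. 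Applying $\flat$ to $C_xC_y=\sum_z h_{x,y,z}C_z$ and reversing the product gives $C_{y^{-1}}C_{x^{-1}}=\sum_z h_{x,y,z}C_{z^{-1}}$, hence $h_{x,y,z}=h_{y^{-1},x^{-1},z^{-1}}$; taking the maximum of degrees over $x,y$ yields $\aff(z)=\aff(z^{-1})$. For (v) I would use that $W=W_1\times W_2$ induces $\mc{H}\cong\mc{H}_1\otimes_{\mc{A}}\mc{H}_2$ compatibly with both the bar involution and the filtration by $\mc{H}_{<0}$, so that $C_{(w_1,w_2)}=C_{w_1}\otimes C_{w_2}$ and the structure constants factor as $h_{(x_1,x_2),(y_1,y_2),(z_1,z_2)}=h^{(1)}_{x_1,y_1,z_1}\,h^{(2)}_{x_2,y_2,z_2}$; since degrees add under multiplication in $\mc{A}$, maximizing gives $\aff(w_1,w_2)=\aff(w_1)+\aff(w_2)$.

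For (iv) I would compute directly. With $w_I$ the longest element of $W_I=\langle I\rangle$, every KL polynomial $P_{x,w_I}$ with $x\in W_I$ equals $1$, so $C_{w_I}=v^{-l(w_I)}\sum_{x\in W_I}v^{l(x)}T_x$. Since $sw_I<w_I$ for each $s\in I$, a short check in $\mc{H}$ gives $T_sC_{w_I}=vC_{w_I}$, and iterating along a reduced expression yields $T_xC_{w_I}=v^{l(x)}C_{w_I}$ for all $x\in W_I$. Therefore
\[
C_{w_I}C_{w_I}=v^{-l(w_I)}\Bigl(\sum_{x\in W_I}v^{2l(x)}\Bigr)C_{w_I},
\]
so $h_{w_I,w_I,w_I}$ has top degree $l(w_I)$ (attained at $x=w_I$), which gives $\aff(w_I)\ge l(w_I)$. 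For the reverse inequality I would invoke Lusztig's general degree bound for the $h_{x,y,z}$, which forces $\deg h_{x,y,w_I}\le l(w_I)$ for all $x,y\in W$; combining the two bounds gives $\aff(w_I)=l(w_I)$.

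The structural statements (iii) and (i) both rest on the monotonicity of $\aff$: if $x\prec_{LR}y$ then $\aff(x)\ge\aff(y)$ (and similarly along $\prec_L$ and $\prec_R$), a property I would take from \cite{lusztig1985cellsI}. Statement (iii) is then immediate, since $x\sim_{LR}y$ gives $\aff(x)\ge\aff(y)$ and $\aff(y)\ge\aff(x)$. For (i) one inclusion is formal, as $\sim_L$ and $\sim_R$ each refine $\sim_{LR}$, so the equivalence relation they generate is contained in $\sim_{LR}$. The substance is the opposite inclusion: given $x\sim_{LR}y$, pick a $\prec_{LR}$-chain from $x$ to $y$; because $\aff(x)=\aff(y)$ and $\aff$ is monotone along the chain, all intermediate values coincide, so every single step $z\prec_L z'$ (or $z\prec_R z'$) occurs with $\aff(z)=\aff(z')$. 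The decisive ingredient is then the rigidity property that $z\prec_L z'$ with $\aff(z)=\aff(z')$ forces $z\sim_L z'$ (Lusztig's property P9, known for finite Weyl groups and in particular for $\mf{S}_n$, the case relevant to this paper). I expect this last step to be the main obstacle, since unlike the others it is not a direct computation but relies on the deeper part of Lusztig's cell theory; once it is in hand, every $\prec_L$- or $\prec_R$-step in the chain upgrades to a $\sim_L$- or $\sim_R$-step and (i) follows.
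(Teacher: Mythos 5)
Your proposal is correct, and for (i), (ii), (iii) and (v) it is essentially the paper's own proof with the details written out: the paper simply cites Lusztig's properties P1--P15 from \cite{lusztig2003hecke} ((i) from P9--P11, with the monotonicity property P4 that your chain argument makes explicit; (iii) from P4; and (ii), (v) ``deduced from the definition'', which is exactly your anti-automorphism argument $C_w^\flat=C_{w^{-1}}$ and your tensor factorization $\mc{H}\cong\mc{H}_1\otimes_{\mc{A}}\mc{H}_2$). The only genuine divergence is (iv). The paper invokes P12, which says the $\aff$-function of $z\in W_I$ computed inside the parabolic subgroup $W_I$ coincides with the one computed in $W$; inside the finite group $W_I$ the elementary bound $\deg h_{x,y,z}\le\min\{l(x),l(y)\}\le l(w_I)$ then gives the upper bound, while your explicit computation of $C_{w_I}C_{w_I}$ gives the lower bound. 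Your route stays in $W$ throughout, and there the upper bound cannot come from that elementary bound, because $x,y$ range over all of $W$ and may be much longer than $w_I$. The ``general degree bound'' you need is precisely Lusztig's P1, namely $\aff(z)\le\Delta(z)$, combined with the fact (which you already noted) that $P_{e,w_I}=1$, so that $\Delta(w_I)=l(w_I)$. So your argument for (iv) is sound, but you should name the ingredient: it is P1, a deep property proved for Weyl groups via positivity, not a formal consequence of the multiplication rules. In the end your P1 route and the paper's P12 route rest on results of the same depth; what your version buys is an explicit verification of the lower bound $\aff(w_I)\ge l(w_I)$ rather than a bare citation, and what the paper's version buys is that, after P12, everything reduces to a computation inside the finite parabolic where only elementary bounds are needed.
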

\begin{proof}
We use the properties   P1-P15 in \S14.2 of \cite{lusztig2003hecke}. The statement (i) follows from P9-P11, (iii) follows from P4, (iv) follows from P12, and (ii),(v) are deduced from the definition.
  \end{proof}

For $ \lambda\in\hs $ the \af-function of $ \Wsmall $ is denoted by \[
\aff_{[\lambda]}:\Wsmall\rar\mathbb{N},
\]
which is defined using the Hecke algebra $ \mc{H}_{[\lambda]} $. One need to note that $\aff_{[\lambda]} $ is not the restriction of $ \aff:W\rar\mathbb{N} $.

\subsection{Gelfand-Kirillov dimensions of   simple highest weight modules}

By \cite[\S1]{lusztig1985A_n}, there is a formula connecting the \gk and the Lusztig's \af-function. Recall that $ L_w $ is the simple $ \mf{g} $-module in $ \msc{O}_0 $ with highest weight $ w.(-2\rho) $.
\begin{Lem}[Lusztig]\label{lem:tie}The \gk of $ L_w $ is given by
\[
\gkd L_w=\nu_{0}-\aff(w)\text{ for }w\in W,
\]where $ \nu_{0}=|\Phi^+| $ is the number of positive roots of $ \mf{g} $.
\end{Lem}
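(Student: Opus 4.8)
The plan is to route the identity through the geometry of nilpotent orbits, matching both sides to (half) the dimension of a single nilpotent orbit $\mathcal{O}_w\subset\mf{g}$ attached to $w$. Since $L_w$ is a cyclic $U(\mf{n}^-)$-module (a quotient of $M(w.(-2\rho))\cong U(\mf{n}^-)$), I would first fix a good filtration and identify $\gkd L_w$ with the dimension of its associated variety $\mathcal{V}(L_w)\subset(\mf{n}^-)^*\cong\mf{n}^+$, the support of $\operatorname{gr}L_w$ over $S(\mf{n}^-)$. By the work of Joseph and Borho–Brylinski on associated varieties of highest weight modules, $\mathcal{V}(L_w)$ is a union of orbital varieties lying in $\overline{\mathcal{O}_w}\cap\mf{n}^+$, where $\mathcal{O}_w$ is the nilpotent orbit whose closure is the associated variety of the primitive ideal $\operatorname{Ann}L_w$. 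Since Spaltenstein's theorem says every orbital variety of $\mathcal{O}_w$ has dimension $\tfrac{1}{2}\dim\mathcal{O}_w$, this gives
\[
\gkd L_w=\dim\mathcal{V}(L_w)=\tfrac{1}{2}\dim\mathcal{O}_w .
\]

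The second ingredient is the identification of $\aff(w)$ with a Springer-fiber dimension. Lusztig's theory attaches to each two-sided cell a nilpotent orbit through the Springer correspondence, and $\aff$ is constant on two-sided cells (Lemma \ref{lem:Hecke}(iii)); the common value on the cell of $w$ equals $\dim\mathcal{B}_{e_w}$, the dimension of the Springer fiber over any $e_w\in\mathcal{O}_w$. Combined with the standard formula $\dim\mathcal{B}_e=\dim\mathcal{B}-\tfrac{1}{2}\dim\mathcal{O}_e$ together with $\dim\mathcal{B}=|\Phi^+|=\nu_0$, this yields
\[
\aff(w)=\nu_0-\tfrac{1}{2}\dim\mathcal{O}_w ,
\]
and adding the two displays proves the lemma. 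Here one must verify that the orbit $\mathcal{O}_w$ appearing on the module side coincides with the one appearing on the cell side; this is precisely the compatibility between Joseph's orbit attached to $\operatorname{Ann}L_w$ and the Springer orbit of the two-sided cell of $w$, which holds because both are determined by the same primitive ideal.

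The main obstacle is the second step: the passage from the purely combinatorial definition of $\aff$ to the geometric quantity $\dim\mathcal{B}_{e_w}$ is genuinely deep, resting on the Kazhdan–Lusztig/Springer correspondence and Lusztig's asymptotic Hecke algebra. In the type-$A$ setting relevant to this paper, however, one can bypass this machinery by an explicit computation. Via the Robinson–Schensted correspondence $w\mapsto(P(w),Q(w))$ the two-sided cell of $w$ is recorded by the common shape $\lambda$ of $P(w)$ and $Q(w)$, and a direct combinatorial check gives $\aff(w)=\sum_{i}\binom{\lambda_i'}{2}$, where $\lambda'$ is the conjugate partition; at the same time Joseph's computation of $\gkd L_w$ by the same correspondence gives $\binom{n}{2}-\sum_i\binom{\lambda_i'}{2}$. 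Since $\nu_0=\binom{n}{2}$ for $\mf{sl}(n)$, the two expressions add up to $\nu_0$, so $\gkd L_w=\nu_0-\aff(w)$ holds term by term. This is the route I would take to make the lemma self-contained in the case actually used below, deferring the general-type assertion to the cited geometric statement.
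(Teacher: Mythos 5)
Your proposal is correct in substance, but note that the paper does not prove this lemma at all: it is stated as a result of Lusztig with a bare citation to \cite[\S 1]{lusztig1985A_n}, and that citation is the paper's entire treatment. What you have written is essentially a reconstruction of the argument lying behind that citation: $\gkd L_w=\tfrac12\dim\mathcal{O}_w$ via Joseph/Gabber/Spaltenstein on associated and orbital varieties, $\aff(w)=\dim\mathcal{B}_{e_w}=\nu_0-\tfrac12\dim\mathcal{O}_w$ via Lusztig's cell--Springer theory, and the matching of the two orbits. The one place where your write-up is too glib is exactly that matching: the agreement between the orbit attached to $\operatorname{Ann}L_w$ by Joseph's theory and the special orbit attached to the two-sided cell of $w$ by Lusztig is itself a deep theorem (Joseph, Barbasch--Vogan), not a formality that ``holds because both are determined by the same primitive ideal''; as a citation-level step it is acceptable, but it cannot be waved through with that sentence. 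Your type-$A$ shortcut is the most valuable part relative to this paper: the combinatorial identity $\aff(w)=\sum_i\binom{\lambda_i'}{2}$ is precisely the paper's own Proposition \ref{pr:avalue} (proved later and independently of Lemma \ref{lem:tie}, so there is no circularity), and combining it with Joseph's Robinson--Schensted computation of $\gkd L_w$ gives a self-contained proof of the lemma in the only case the paper's main theorems actually use; your endpoint checks ($w=e$ giving the Verma module with $\gkd=\nu_0$, $w=w_0$ giving the trivial module with $\gkd=0$) confirm the conventions are consistent. The trade-off between the two approaches: the paper's citation covers all types at zero cost but is a black box, while your route makes type $A$ transparent and nearly elementary (granting Joseph's theorem), with the general-type assertion still resting on the same deep geometric inputs that Lusztig's formula always requires.
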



\begin{Defff}\label{def:a_lambda}
For $ \lambda\in \hs $, we define \[ \aff(\lambda): =\aff_{[\lambda]} (w)\] where $ w $ is the element of $ \Wsmall $ of minimal length such that $ w^{-1}.\lambda $ is antidominant.
\end{Defff}

\begin{Prop}\label{pr:main1}
For any $ \lambda\in\hs $, we have\[
\gkd L(\lambda)=\nu_{0}-\aff(\lambda).
\]
\end{Prop}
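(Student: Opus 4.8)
The plan is to treat the integral and non-integral cases separately, the guiding idea in both being to transport $L(\lambda)$ to a \emph{regular} weight by translation functors and then apply Lusztig's formula (Lemma \ref{lem:tie}). For integral $\lambda\in\Lambda$ we have $\Phi_\lambda=\Phi$, hence $\Wsmall=W$ and $\aff_{[\lambda]}=\aff$, so $\aff(\lambda)=\aff(w)$ with $w$ the minimal-length element of $W$ making $w^{-1}.\lambda$ antidominant. First I would apply Corollary \ref{cor:translation} to obtain $T^{\lambda}_{w.(-2\rho)}(L_w)=L(\lambda)$. Since $-2\rho$ is regular antidominant, $w.(-2\rho)$ lies in the chamber $w.C^-$, and by Lemma \ref{lem:min-char} the weight $\lambda$ lies in the upper closure of this chamber; Lemma \ref{lem:upper} then gives $\gkd L(\lambda)=\gkd L_w$. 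Together with Lemma \ref{lem:tie} this yields $\gkd L(\lambda)=\nu_{0}-\aff(w)=\nu_{0}-\aff(\lambda)$, settling the integral case.

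For general $\lambda$ the delicate point is that $\Wsmall$ is a proper reflection subgroup of $W$ and $\aff_{[\lambda]}$ is computed in the smaller Hecke algebra $\mc{H}_{[\lambda]}$, while the target formula still uses $\nu_{0}=|\Phi^+|$ rather than $|\Phi_\lambda^+|$. I would first stay inside the linkage coset $\lambda+\Lambda$. Translation functors $T^{\mu}_{\nu}$ make sense whenever $\mu-\nu$ is integral, so they act within this coset, and the hyperplane arrangement relevant to the coset is exactly the one cut out by $\Phi_\lambda$, governed by $\Wsmall$. The analogues of Lemmas \ref{lem:min-char}, \ref{lem:translation} and \ref{lem:upper} should hold verbatim with $W$ replaced by $\Wsmall$, since their proofs use only integrality of the \emph{difference} of the two weights; this transports $\gkd L(\lambda)$ to $\gkd L(w.\lambda_{0})$, where $\lambda_{0}$ is the unique regular antidominant weight of the coset and $w\in\Wsmall$ is of minimal length with $w^{-1}.\lambda$ antidominant.

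The second step relates the regular non-integral block $\Ocat_{\lambda_{0}}$ to an integral one. Here I would invoke Soergel's equivalence between $\Ocat_{\lambda_{0}}$ and the principal block of category $\Ocat$ for the semisimple Lie algebra $\mf{g}_{[\lambda]}$ with root system $\Phi_\lambda$ and Weyl group $\Wsmall$, under which $L(w.\lambda_{0})$ corresponds to the principal-block simple $L^{[\lambda]}_{w}$ and the cell combinatorics of $\Wsmall$ is preserved. Applying Lemma \ref{lem:tie} inside $\mf{g}_{[\lambda]}$ (whose number of positive roots is $|\Phi_\lambda^+|$) gives $\gkd_{\mf{g}_{[\lambda]}}L^{[\lambda]}_{w}=|\Phi_\lambda^+|-\aff_{[\lambda]}(w)$. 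What remains is to compare the two Gelfand-Kirillov dimensions across this equivalence.

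This comparison is the main obstacle I anticipate, because the equivalence does not preserve GK dimension: the ambient algebras differ. The goal is to prove the constant-shift identity
\[
\gkd_{\mf{g}} L(w.\lambda_{0})=\gkd_{\mf{g}_{[\lambda]}}L^{[\lambda]}_{w}+\bigl(\nu_{0}-|\Phi_\lambda^+|\bigr),
\]
i.e. that passing from $\mf{g}$ to $\mf{g}_{[\lambda]}$ lowers the GK dimension by exactly the number of positive roots outside $\Phi_\lambda$. The conceptual reason is that the characteristic variety of $L(w.\lambda_{0})$ in $(\mf{n}^-)^*$ ought to be \emph{cylindrical} along the directions indexed by $\Phi^+\setminus\Phi_\lambda$: no singular vectors, and hence no defining relations, occur in those non-integral root directions, so the variety should be the product of the characteristic variety of $L^{[\lambda]}_{w}$ with the linear space $(\mf{n}^-/\mf{n}^-_{[\lambda]})^*$. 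Once this is established, substituting back gives $\gkd L(\lambda)=\gkd L(w.\lambda_{0})=\nu_{0}-\aff_{[\lambda]}(w)=\nu_{0}-\aff(\lambda)$. Making the cylindrical structure rigorous, presumably through a weight-multiplicity growth estimate or an associated-graded computation comparing $L(w.\lambda_{0})$ with the Verma module $M(\lambda_{0})$, is where I expect the real work to lie.
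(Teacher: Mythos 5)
Your integral case is exactly the paper's argument (Corollary \ref{cor:translation}, Lemma \ref{lem:upper}, Lemma \ref{lem:tie}), and your overall strategy for general $\lambda$ is also the paper's: pass via Soergel's equivalence to an integral block for an auxiliary reductive algebra $\mf{g}_\lambda$ with root system $\Phi_\lambda$ and Weyl group $W_{[\lambda]}$, then shift the Gelfand--Kirillov dimension by $\nu_0-|\Phi_\lambda^+|$. But the shift identity
\[
\gkd_{\mf{g}} L(w.\lambda)\;=\;\gkd_{\mf{g}_\lambda}L^0(w.\lambda)+\bigl(\nu_0-|\Phi_\lambda^+|\bigr)
\]
is precisely the crux of the whole proposition, and you leave it as a heuristic (the characteristic variety \emph{ought to be} cylindrical), explicitly deferring the real work. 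That is a genuine gap: nothing in your sketch supplies the argument. The paper closes it in two steps. First, because Soergel's equivalence matches composition multiplicities, $\mr{ch}\,L(w.\lambda)=\sum_y a_{y,w}\,\mr{ch}\,M(y.\lambda)$ holds if and only if $\mr{ch}\,L^0(w.\lambda)=\sum_y a_{y,w}\,\mr{ch}\,M^0(y.\lambda)$ with the \emph{same} integers, and Verma characters on the two sides differ by the explicit factor $\prod_{\alpha\in\Phi^+\setminus\Phi_\lambda^+}(1-e^{-\alpha})^{-1}$; hence $\mr{ch}\,L(w.\lambda)$ equals that factor times $\mr{ch}\,L^0(w.\lambda)$. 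Second, one passes to associated graded modules over $S(\mf{g})$: since $S(\mf{g})$ is commutative, $\mf{n}^+$ kills $\mathrm{gr}\,L(\mu)$, giving a surjection $S(\mf{g})\otimes_{S(\mf{g}_\mu+\mf{n}^+)}\mathrm{gr}\,L^0(\mu)\twoheadrightarrow \mathrm{gr}\,L(\mu)$, which the character identity forces to be an isomorphism; the GK dimension then shifts by exactly $|\Phi^+|-|\Phi_\lambda^+|$ by \cite[Prop. 6.6]{Kr-Le}. Note that this argument \emph{must} be run at the level of symmetric algebras: $\mf{g}_\lambda$ is in general only a subspace, not a Lie subalgebra, of $\mf{g}$, so there is no map $U(\mf{g})\otimes_{U(\mf{g}_\lambda+\mf{n}^+)}L^0(\mu)\to L(\mu)$ — a point your product-variety picture quietly assumes away.

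A secondary issue: your preliminary reduction to a regular weight $\lambda_0$ of the coset is unnecessary (Soergel's theorem applies directly to the possibly singular block $\Ocat_\lambda$, which is how the paper uses it), and as written it invokes Lemmas \ref{lem:min-char}, \ref{lem:translation} and \ref{lem:upper} outside the integral setting in which the paper states and proves them. Jantzen's translation theory does extend to weights whose difference is integral, but if you keep that detour you must cite or prove that extension rather than assert it holds \emph{verbatim}.
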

 \begin{proof}
First, we assume that   $ \lambda\in\Lambda $. In this case, $ W_{[\lambda]}=W $.	By Corollary \ref{cor:translation}, Lemma \ref{lem:upper} and Lemma \ref{lem:tie}, we have
\begin{equation}\label{eq:qq2}
\gkd L(\lambda)\overset{\ref{cor:translation}+\ref{lem:upper}}{=}\gkd L(w.(-2\rho))\overset{\ref{lem:tie}}{=}|\Phi^+|-\aff(w),
\end{equation}
where $ w$ is the element of $W $ of minimal length such that $ w^{-1}.\lambda $ is antidominant. Then we see that  the proposition holds for integral weights.

Now we return to the general  assumption that $ \lambda\in\mf{h}^* $.  In the following, we will reduce the proof  to the integral weight case.

Let $ \mf{g}_\lambda $ be a reductive Lie algebra such that its Cartan subalgebra coincides with  the Cartan subalgebra $ \mf{h} $ of $ \mf{g} $ and its root system  is given by $ \Phi_\lambda $ (see \eqref{eq:phil}). Then $ W_{[\lambda]} $ is the Weyl group of $ \mf{g}_\lambda $. Note that in general we can not realize $ \mf{g} _\lambda$ as a Lie subalgebra of $ \mf{g} $ unless $ \mf{g} $ is of $ ADE $-type.

We denote by $ \ms{O}^0 $ the category $ \ms{O} $ for  $ \mf{g}_\lambda $ with respect to $ \Phi_\lambda^+:=\Phi_\lambda\cap \Phi^+ $, and denote by $ L^0(\mu) $ (resp. $ M^0(\mu) $) the simple  module (resp. Verma modules) in $ \ms{O}^0  $ with highest weight $ \mu \in \mf{h}^*$. Let  $ \ms{O}^0_\lambda $ be the block of  $ \ms{O}^0 $ that containing $ L^0(\lambda) $. All the simple modules (resp. Verma module) in $ \ms{O}^0_\lambda $ are given by $ L^0(w.\lambda) $ (resp. $ M^0(w.\lambda) $) with $ w\in \Wsmall $. By \cite[Thm.11]{Soergel1990}, we have an equivalence of categories
\begin{equation}\label{eq:equi}
\ms{O}^0_\lambda\simeq\ms{O}_\lambda
\end{equation}
under which $ L^0(w.\lambda) $ (resp. $ M^0(w.\lambda) $) corresponds to $ L(w.\lambda) $ (resp. $ M(w.\lambda) $).

Recall that for a semisimple  $ \mf{h} $-module $ M=\bigoplus_{\lambda\in\mf{h}^*}M_\lambda $ with $ M_{\lambda}=\{m\in M\mid h.m=\lambda(h)m \} $ and $ \dim M_\lambda<\infty $, the character of $ M $ is defined to be a formal sum $ \mr{ch} M=\sum_{\mu\in \mf{h}^*}(\dim M_\mu )e^\mu $. For more details about the characters of   modules in $ \ms{O} $,   see \cite[\S1.15]{Hum08}. For $ w\in W_{[\lambda]} $, we have
\[
\mr{ch}M(w.\lambda)=\dfrac{e^{w.\lambda}}{\prod _{\alpha\in\Phi^+}(1-e^{-\alpha})},
\qquad
\mr{ch}M^0(w.\lambda)=\dfrac{e^{w.\lambda}}{\prod _{\alpha\in\Phi_\lambda^+}(1-e^{-\alpha})}.
\]
Hence, $ \mr{ch}M(w.\lambda)=\dfrac{1}{\prod_{\alpha\in\Phi^+\setminus\Phi^+_\lambda}(1-e^{-\alpha})}\mr{ch}M^0(w.\lambda) $. From the equivalence \eqref{eq:equi}, we see that   $ \mr{ch}L(w.\lambda) =\sum_{y\in W_{[\lambda]}} a_{y,w} \mr{ch} M(y.\lambda)$ for some integers $ a_{y,w} $ if and only if $ \mr{ch}L^0(w.\lambda)=\sum_{y\in W_{[\lambda]}} a_{y,w} \mr{ch} M^0(y.\lambda)$.
Thus, \begin{equation*}\label{eq:chL}
\mr{ch}L(w.\lambda)=\dfrac{1}{\prod_{\alpha\in\Phi^+\setminus\Phi^+_\lambda}(1-e^{-\alpha})}\mr{ch}L^0(w.\lambda).
\end{equation*}
This implies that\footnote{This implication is not obvious.  It can be proved in the following way. The enveloping algebra $ U(\mathfrak{g}) $ is filtered by the subspaces $ U_n(\mathfrak{g}) $ which is spanned by the monomials of degrees less than or equal to $ n $. It is known that $ gr U(\mathfrak{g})=\bigoplus_{n\geq1}U_n(\mathfrak{g})/U_{n-1}(\mathfrak{g}) $ is isomorphic to the symmetric algebra $ S(\mathfrak{g}) $. Let $ \mu\in\mathfrak{h}^* $, and $ v_\mu\in L(\mu) $ be a highest weight vector. Then $ gr L(\mu) :=\bigoplus_{n\geq1} U_{n}(\mathfrak{g}).v_\mu/U_{n-1}(\mathfrak{g}).v_\mu $ is an $ S(\mathfrak{g}) $-module generated by $ v_\mu $. Since $ S(\mathfrak{g}) $ is commutative, we have $ \mathfrak{n}^+.gr L(\mu)=S(\mathfrak{g})\mathfrak{n}^+.v_\mu=0$.
	
Since $ \mathfrak{g}_\mu $ is a subspace of $ \mathfrak{g} $, we have an embedding $ S(\mathfrak{g}_\mu)\hookrightarrow S(\mathfrak{g}) $. One can see that there is a surjective map  $ S(\mathfrak{g})\otimes_{S(\mathfrak{g}_\mu+\mathfrak{n}^+)}gr L^0(\mu)\twoheadrightarrow  gr  L(\mu) $ of $ S(\mathfrak{g}) $-algebras, where the $ S(\mathfrak{g}_\mu) $-module $ gr L^0(\mu) $ is extended to an $ S(\mathfrak{g}_\mu+\mathfrak{n}^+) $-module by letting $ \mathfrak{n}^+.gr L^0(\mu)=0 $. This map is actually an isomorphism  since
$ \mr{ch}L(\mu)=\dfrac{1}{\prod_{\alpha\in\Phi^+\setminus\Phi^+_\mu}(1-e^{-\alpha})}\mr{ch}L^0(\mu)$. Then by the definition of GK dimension, we have $ \gkd gr L(\mu)=|\Phi^+|-|\Phi_\mu^{+}|+\gkd gr L^0(\mu) $.  Now  \eqref{eq:qq1} follows from \cite[Prop. 6.6]{Kr-Le}. (Note that in general we have no map $ U(\mathfrak{g})\otimes_{U(\mathfrak{g}_\mu+\mathfrak{n}^+)} L^0(\mu)\to   L(\mu) $, since $ \mathfrak{g}_\mu $ is not necessarily a Lie subalgebra of $ \mathfrak{g} $.)
} 
\begin{equation}\label{eq:qq1}
 \gkd L(w.\lambda)=|\Phi^+|-|\Phi_\lambda^{+}|+\gkd L^0(w.\lambda).
\end{equation}
 By the definition of $ \mf{g}_\lambda $,  $ \lambda $ is integral with respect to  $ \mf{g}_\lambda $. Then by the first part of the proof, we have
 \[
 \gkd L^0(\lambda)=|\Phi_ \lambda^+|-\aff(\lambda).
 \]
 Combined with \eqref{eq:qq1}, we obtain that
 \[
 \gkd L(\lambda)=|\Phi^+|-\aff(\lambda).
 \]
Then the proposition follows.
   \end{proof}

\section{Gelfand-Kirillov dimensions for $ \mf{sl}(n) $}

\subsection{Schensted insertion algorithm}\label{sec:schensted}
A Young diagram is a collection of   boxes arranged in left-justified rows, with the row lengths weakly increasing. Let $ \Gamma $ be a totally ordered set; in the situation that we will encounter in this paper,  $ \Gamma$ is taken to be $c+\mathbb{N} $ for some $ c\in\mathbb{C} $. A \textit{Young tableau} $ Y $ is a Young diagram with each box filled with an element of  $ \Gamma $. A Young tableau is called \textit{standard} if the entries in each row and in each column are strictly  increasing. And a Young tableau  is called \textit{semistandard} if the entries in each row (resp. column) are weakly (resp. strictly) increasing.

Let $ \gamma=(\gamma_1,\cdots,\gamma_n) $ be a sequence of elements in $ \Gamma $. The following Schensted insertion algorithm associates $ \gamma $ to a pair $( P(\gamma) $, $ Q(\gamma)) $ of  semistandard  Young tableaux.

Let $ P_0 $, $ Q_0 $ be two empty Young tableaux. Assume that we have constructed Young tableaux $ P_k $, $ Q_k$ associated to $ (\gamma_1,\cdots,\gamma_k) $, $ 0\leq k<n $. Then $ P_{k+1} $ is obtained by adding $ \gamma_{k+1} $ to $ P_k $ as follows. First add $ \gamma_{k+1} $ to the first row of $ P_k $ by replacing the leftmost entry $ x $ in the first row which is \textit{strictly} bigger than $ \gamma_{k+1} $.  (If there is no such an entry $ x $, we just add a box with entry $\gamma_{k+1}  $ to the right side of the first row, and end this process.) Then add $ x $ to the next row as the same way of adding $ \gamma_{k+1} $ to the first row.  The Young $ Q_{k+1} $ is obtained by adding a box with entry $ k+1 $ to $ Q_k $ at the position of  $ P_{k+1}\setminus P_k $. Then we put $P(\gamma)=P_n  $ and $ Q(\gamma)=Q_{n} $. Note that $ P(\gamma) $ is a semistandard Young tableau and $ Q(\gamma) $ is a standard Young tableau, and they have the same shape.

 For example, if $ \gamma=(3,5,2,2,1) $, then the Young tableaux  produced by this algorithm are:
\[
\young(3)\rar\young(35)\rar\young(25,3)\rar \young(22,35)\rar\young(12,25,3)=P(\gamma),\]\[
\young(1)\rar\young(12)\rar\young(12,3)\rar \young(12,34)\rar\young(12,34,5)=Q(\gamma).
\]
\subsection{Lusztig's \af-function for $ \mf{g}=\mf{sl}(n) $}\label{sec:notation}
In this section, $ \mf{g} $ is assumed to be the special linear Lie algebra $ \sln $ and the Cartan subalgebra $ \mf{h} $ is taken to be the subalgebra  consists of all diagonal matrices in $ \sln $. Let $ \epsilon_i $, $ i\in [1,n] $ be the elements of $ \hs $ such that $ \epsilon_i(E_{jj})=\delta_{ij} $ where $ E_{ij} $ is the matrix unit with entry 1 at $ (i,j) $-position. We have a relation $ \sum_{i=1}^{n}\epsilon_i=0 $ and we have $ \hs=(\bigoplus_{i=1}^{n}\mathbb{C}\epsilon_i)/\mathbb{C}(\sum_{i=1}^n\epsilon_i) $. The set of positive roots of $ \sln $ is taken to be $ \Phi^+=\{\epsilon_i-\epsilon_j\mid i<j \} $. The set of simple roots  is  $ \Delta=\{\epsilon_i-\epsilon_{i+1}\mid i<n \} $. The number of positive roots is $ \nu_{0}=\frac{n(n-1)}{2} $.

For a weight $ \lambda\in \hs $, we write\[
\lambda+\rho=(\lambda_1,\lambda_2,\cdots,\lambda_n),
\]where $ \lambda+\rho=\sum_{i=1}^{n}\lambda_i\epsilon_i $, $ \lambda_i\in\mathbb{C} $. Since $ \sum_{i=1}^{n}\epsilon_i =0$ we have $ (\lambda_1+c,\lambda_2+c,\cdots,\lambda_n+c)=(\lambda_1,\lambda_2,\cdots,\lambda_n) $ for any $ c\in \mathbb{C} $.
For example, $ 2\rho=(n-1,n-3,\cdots, -n+1) $ and $ \rho=(-1,-2,\cdots,-n) $.

Let $ \mf{S} _n$ be the symmetric group in $ n $ letters $ 1,2,3,\cdots,n $. Then $ \mf{S} _n$ can be identified with the Weyl group of $ \sln $ via the action\[
\sigma(\epsilon_i)=\epsilon_{\sigma(i)}, i\in[1,n], \sigma\in\mf{S}_n.
\]Then simple reflections of $ \sn $ with respect to $ \Delta $ are $ (i,i+1) $, $1\leq i<n $. For an element $ \sigma\in\sn $, we use the notation $ \sigma=\begin{pmatrix}
1 & 2 & \cdots & n \\
\sigma_1 & \sigma_2 & \cdots & \sigma_n
\end{pmatrix}  $ where $ \sigma_i=\sigma(i) $, $ i\in[1,n] $. We denote by $ P(\sigma) $ and $ Q(\sigma) $ the standard  Young tableaux associated to the sequence $ (\sigma_1, \sigma_2,\cdots,\sigma_n) $, which are produced by the Schensted insertion algorithm in \S\ref{sec:schensted}. For example, if $ \sigma=\begin{pmatrix}
1 & 2 & 3 & 4 \\
3 & 4& 2&1
\end{pmatrix}   $ then $ P(\sigma)=\young(14,2,3) $ and $ Q(\sigma)=\young(12,3,4) $.
The well-known Robinson-Schensted correspondence says that there is a one-to-one correspondence between $ \sn $ and the set of pairs of standard  Young tableaux (filled with 1, 2, \dots, n) of the same shape.
Moreover, we have
\begin{Lem}\label{lem:rs}
\begin{itemize}
\item [(i)] $ P(\sigma)=Q(\sigma^{-1}) $ for any $ \sigma\in\sn $.
\item [(ii)] $ \sigma,\tau\in\sn $ are in the same right (resp. left ) cell if and only if $ P(\sigma)=P(\tau) $ (resp. $  Q(\sigma)=Q(\tau)$); see \S\ref{sec:cell} for the definition of cells.
\item [(iii)]$ \sigma,\tau\in\sn $ are in the same two-sided  cell if and only if $ P(\sigma)$, $P(\tau) $ have the same shape.
\end{itemize}
\end{Lem}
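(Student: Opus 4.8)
The plan is to prove the three parts in the order (i), (ii), (iii), arranging matters so that the only genuinely deep ingredient is a single classical theorem; everything else reduces to it by formal manipulations using part (i) and Lemma \ref{lem:Hecke}(i).

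First I would establish (i), which is exactly the \emph{symmetry} of the Robinson--Schensted correspondence: if $\sigma$ corresponds to the pair $(P(\sigma),Q(\sigma))$, then $\sigma^{-1}$ corresponds to $(Q(\sigma),P(\sigma))$. This is a purely combinatorial statement about the insertion algorithm of \S\ref{sec:schensted}, due to Schützenberger, and I would simply cite a standard reference (e.g.\ Fulton's \emph{Young Tableaux} or Sagan's \emph{The Symmetric Group}) rather than reproduce the growth-diagram/matrix-ball argument. Reading off the recording tableaux on both sides of the symmetry statement gives $P(\sigma)=Q(\sigma^{-1})$, which is (i).

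Next I would address (ii). The assertion that $\sigma\sim_L\tau$ if and only if $Q(\sigma)=Q(\tau)$ is the fundamental description of the left cells of $\sn$ via the Robinson--Schensted correspondence, and I would import it by citing the original Kazhdan--Lusztig paper in the type $A$ case. The right-cell half is then \emph{not} independent: using the definition $\sigma\sim_R\tau\Leftrightarrow\sigma^{-1}\sim_L\tau^{-1}$ together with the left-cell half and part (i), I would compute
\[
\sigma\sim_R\tau\iff\sigma^{-1}\sim_L\tau^{-1}\iff Q(\sigma^{-1})=Q(\tau^{-1})\iff P(\sigma)=P(\tau),
\]
where the last equivalence is (i) applied to both $\sigma$ and $\tau$. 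This settles (ii) once the left-cell half is granted.

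Finally I would deduce (iii) from (ii) and the bijectivity of the Robinson--Schensted correspondence, invoking Lemma \ref{lem:Hecke}(i) that $\sim_{LR}$ is generated by $\sim_L$ and $\sim_R$. For the forward direction, $\sigma\sim_L\tau$ forces $Q(\sigma)=Q(\tau)$ and $\sigma\sim_R\tau$ forces $P(\sigma)=P(\tau)$; in either case $P(\sigma)$ and $P(\tau)$ share a shape, and since $\sim_{LR}$ is generated by these two relations the same conclusion propagates along any chain, so it holds whenever $\sigma\sim_{LR}\tau$. For the converse, suppose $P(\sigma)$ and $P(\tau)$ have a common shape $\mu$. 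Then $P(\sigma)$ and $Q(\tau)$ are both standard tableaux of shape $\mu$, so by bijectivity there is a unique $\sigma'\in\sn$ with $P(\sigma')=P(\sigma)$ and $Q(\sigma')=Q(\tau)$. By (ii) we get $\sigma\sim_R\sigma'$ (equal $P$-tableaux) and $\sigma'\sim_L\tau$ (equal $Q$-tableaux), whence $\sigma\sim_{LR}\tau$. The main obstacle is precisely the left-cell half of (ii): this is a substantive theorem of Kazhdan and Lusztig that cannot be extracted from the material developed earlier in the excerpt and must be brought in as external input, while all the remaining steps are either a standard combinatorial identity (part (i)) or formal bookkeeping resting on (i), (ii) and Lemma \ref{lem:Hecke}(i).
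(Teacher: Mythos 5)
Your proposal is correct, and for parts (i) and (ii) it runs parallel to the paper's own proof: (i) is the classical symmetry of the Robinson--Schensted correspondence (which the paper dismisses as ``straightforward from the definition''), and (ii) imports the description of one family of cells from an external source (the paper cites Ariki's Theorem~A, you cite Kazhdan--Lusztig) and recovers the other family formally via (i), exactly the chain of equivalences you wrote. The genuine divergence is in the ``if'' direction of (iii). You argue directly: given $P(\sigma)$ and $P(\tau)$ of common shape, bijectivity of Robinson--Schensted produces $\sigma'$ with $P(\sigma')=P(\sigma)$ and $Q(\sigma')=Q(\tau)$, and then $\sigma\sim_R\sigma'\sim_L\tau$ yields $\sigma\sim_{LR}\tau$. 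The paper instead runs a counting argument: by the ``only if'' part the shape is constant on two-sided cells, Lemma~\ref{lem:zy} shows every shape is realized (via the longest element $\sigma_Y$ of a parabolic subgroup), and the known fact that the number of two-sided cells of $\sn$ equals the number of partitions of $n$ forces the shape map on two-sided cells to be injective. Your route is more self-contained and constructive --- it needs only the RS bijection plus (ii), with no input about how many two-sided cells there are --- whereas the paper's route trades that for an extra cited fact (the cell count) together with Lemma~\ref{lem:zy}, which it needs later anyway. Both arguments are valid; yours is arguably the cleaner derivation of (iii).
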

\begin{proof}
(i) is straightforward from the definition. (ii) follows from \cite[Thm. A]{Ariki2000} and (i). The `` only if " part of (iii) follows from (i) of Lemma \ref{lem:Hecke} and (ii) of this lemma. And the `` if " part follows from the `` only if " part, the following Lemma \ref{lem:zy} and the known fact that the number of two-sided cells of $ \sn $ is equal to that of partitions of $ n $.
  \end{proof}
\begin{Lem}\label{lem:zy}
Let $ Y $ be a Young tableau with $ c_i $ entries in the $ i $-th column, and  $ \sum_i c_i=n $. Let $ \sigma_Y $ be the longest element in the parabolic subgroup of $ \mf{S}_n $ generated by $ s_k=(k,k+1) $, $ k\in[1,n]\setminus\{\sum_{j=1}^{i} c_j\mid i\geq1 \} $. Then $ P(\sigma_Y) $ and $ Y $ have the same shape. More precisely, the entries in the $ i $-th column of $ P(\sigma_Y) $ are  the integers $ x $ such that $ \sum_{j=1}^{i-1} c_j<x \leq  \sum_{j=1}^{i} c_j $.
\end{Lem}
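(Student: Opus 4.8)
The plan is to read off $P(\sigma_Y)$ directly from the Schensted insertion by exploiting the block structure of $\sigma_Y$. Write $C_0=0$ and $C_i=\sum_{j=1}^{i}c_j$, and set $B_i=\{C_{i-1}+1,\dots,C_i\}$, the $i$-th block of consecutive integers. The omitted generators are exactly $s_{C_1},s_{C_2},\dots$, so the generators $s_k$ that remain connect consecutive integers inside each $B_i$ but never across two blocks; hence the parabolic subgroup factors as the direct product $\mf{S}_{B_1}\times\mf{S}_{B_2}\times\cdots$, and its longest element $\sigma_Y$ is the product of the longest elements of the factors. As the longest element of $\mf{S}_{B_i}$ reverses the block, in one-line notation $\sigma_Y$ is the concatenation of the strictly decreasing runs $C_1,C_1-1,\dots,1$, then $C_2,C_2-1,\dots,C_1+1$, and so on. Since $Y$ is a genuine Young diagram, the column heights satisfy $c_1\ge c_2\ge\cdots$; I shall use this partition inequality below.

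Two features of this word drive everything: each block $B_i$ is a \emph{strictly decreasing} run of length $c_i$, and every value in $B_{i+1}$ is strictly larger than every value in $B_1\cup\cdots\cup B_i$. I would prove by induction on $i$ that after inserting the first $i$ blocks the tableau has exactly $i$ columns, with its $j$-th column equal, read from top to bottom, to $C_{j-1}+1,C_{j-1}+2,\dots,C_j$ (the entries of $B_j$ in increasing order), so that column $j$ has height $c_j$. The base case $i=1$ is the standard fact that inserting a strictly decreasing sequence produces a single column carrying those entries in increasing order.

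For the inductive step I insert the decreasing run $C_{i+1},C_{i+1}-1,\dots,C_i+1$ into the tableau obtained after $i$ blocks. Because every new value exceeds all entries present, the largest new value $C_{i+1}$ is strictly bigger than each entry of the first row and is appended at its right end, opening a new column $i+1$. Each subsequent value $C_{i+1}-t$ is again larger than every entry of columns $1,\dots,i$ in every row, so upon insertion it meets no such entry that is strictly larger; the bumping therefore stays entirely within column $i+1$, sliding the previously placed block-$(i+1)$ value down by one row. Each landing square is legal because column $i$ already has height $c_i\ge c_{i+1}$, so every row reached by column $i+1$ already extends as far as column $i$. Thus columns $1,\dots,i$ are untouched and the $c_{i+1}$ inserted values fill column $i+1$ with $C_i+1,\dots,C_{i+1}$ from top to bottom, which is exactly the inductive claim for $i+1$. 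Taking $i$ maximal yields the lemma.

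I expect the one delicate point to be this verification that the cascade remains inside the new column and always finds a legal landing square, which is precisely where the partition inequality $c_{i+1}\le c_i$ is needed; one must follow the insertion row by row rather than merely invoke ``a decreasing word builds a column.'' As an independent check on the shape I would also note Greene's theorem: the partial sums of the column lengths of $P(\sigma_Y)$ equal the maximal sizes of unions of $k$ decreasing subsequences of $\sigma_Y$, and since any decreasing subsequence of $\sigma_Y$ must lie inside a single block, these maxima are the sums of the $k$ largest $c_i$, recovering the shape of $Y$.
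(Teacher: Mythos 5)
Your proof is correct and takes essentially the same approach as the paper: the paper's own proof is the one-line assertion that the lemma ``is straightforward from the algorithm for $P(\sigma)$'', and your block-by-block induction through the Schensted insertion (using $c_1\ge c_2\ge\cdots$ to keep each bumping cascade inside the newly created column) is exactly the verification the paper leaves implicit. Nothing needs to be corrected.
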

\begin{proof}
This is straightforward from the algorithm for $ P(\sigma) $.
  \end{proof}

\begin{Defff}\label{def:zay}
Let $ Y $ be a Young tableau with $ c_i $ entries in the $ i $-th column.  Define $ A(Y) $ to be the integer $ \sum_{i\geq 1} \frac{c_i(c_i-1)}2$, which is the length of  the element $ \sigma_Y $ in the above lemma.
\end{Defff}

\begin{Prop}\label{pr:avalue}
For $ \sigma\in\sn $, we have \[
\aff(\sigma)=A(P(\sigma)).
\]
\end{Prop}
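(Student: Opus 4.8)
The plan is to prove $\aff(\sigma)=A(P(\sigma))$ by reducing the computation of $\aff$ on an arbitrary permutation to its value on a single distinguished element of the two-sided cell, namely the parabolic longest element $\sigma_{Y}$ attached to $Y=P(\sigma)$ in Lemma \ref{lem:zy}. The three facts I would string together are: (a) $\aff$ is constant on two-sided cells; (b) two-sided cells are detected by the shape of the $P$-tableau; and (c) on the longest element of a standard parabolic subgroup, $\aff$ equals the length, which in turn equals $A$.

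First I would set $Y=P(\sigma)$ and consider $\sigma_{Y}=\sigma_{P(\sigma)}$, the longest element of the parabolic subgroup of $\sn$ generated by $\{s_k=(k,k+1)\mid k\in[1,n]\setminus\{\sum_{j=1}^{i}c_j\mid i\geq 1\}\}$, where $c_i$ is the number of entries in the $i$-th column of $Y$. By Lemma \ref{lem:zy}, the tableau $P(\sigma_{Y})$ has the same shape as $Y=P(\sigma)$. Hence $P(\sigma)$ and $P(\sigma_{Y})$ have the same shape, so by Lemma \ref{lem:rs}(iii) the permutations $\sigma$ and $\sigma_{Y}$ lie in the same two-sided cell of $\sn$.

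Next, since $\aff$ is constant on each two-sided cell by Lemma \ref{lem:Hecke}(iii), this gives $\aff(\sigma)=\aff(\sigma_{Y})$. Now $\sigma_{Y}$ is by construction the longest element $w_I$ of the standard parabolic subgroup generated by the subset $I\subset S$ displayed above, so Lemma \ref{lem:Hecke}(iv) yields $\aff(\sigma_{Y})=l(\sigma_{Y})$. Finally, Definition \ref{def:zay} identifies the length of $\sigma_{Y}$ with $A(Y)=\sum_{i\geq 1}\tfrac{c_i(c_i-1)}{2}$. Chaining these equalities,
\[
\aff(\sigma)=\aff(\sigma_{Y})=l(\sigma_{Y})=A(Y)=A(P(\sigma)),
\]
which is the desired identity.

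I do not expect a serious computational obstacle here, since each individual step is a direct invocation of an already-established lemma; the proof is essentially an assembly argument. The one genuinely substantive point, which I would state carefully, is the choice to compare $\sigma$ against the parabolic representative $\sigma_{P(\sigma)}$ of its two-sided cell: this works precisely because $\aff$ is a two-sided cell invariant while simultaneously being computable as a length on parabolic longest elements, and because the shape of $P(\sigma)$ is exactly the combinatorial datum that both controls the cell (via Robinson--Schensted, Lemma \ref{lem:rs}(iii)) and determines $A$. The only thing requiring a moment's care is verifying that the representative $\sigma_{P(\sigma)}$ genuinely falls into the hypotheses of Lemma \ref{lem:Hecke}(iv), i.e.\ that it is the longest element of a subgroup generated by a subset of $S$ rather than a more general reflection subgroup; but this is immediate from the definition of $\sigma_Y$ in Lemma \ref{lem:zy}, whose generators are consecutive transpositions $s_k\in S$.
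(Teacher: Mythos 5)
Your proof is correct and follows essentially the same argument as the paper: both reduce $\sigma$ to the parabolic longest element $\sigma_{P(\sigma)}$ via the shape of the Robinson--Schensted tableau (Lemmas \ref{lem:zy} and \ref{lem:rs}(iii)), then conclude using the cell-invariance of $\aff$ and the identity $\aff(w_I)=l(w_I)$ from Lemma \ref{lem:Hecke}(iii),(iv) together with Definition \ref{def:zay}. Your closing remark that $\sigma_Y$ is a \emph{standard} parabolic longest element (generated by simple transpositions) is a worthwhile point of care that the paper leaves implicit.
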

\begin{proof}
Let $ Y=P(\sigma) $. Since $ P(\sigma_Y) $ and $ P(\sigma) $ are of the same shape (see Lemma \ref{lem:zy}),  $ \sigma_Y $ and $ \sigma $ are in the same two-sided cell by Lemma \ref{lem:rs}(iii). Hence $ \aff(\sigma)=\aff(\sigma_Y)=l(\sigma_Y)= A(Y)$ by Lemma \ref{lem:Hecke} (iii), (iv) and the Definition \ref{def:zay}.
  \end{proof}
 This proposition gives an efficient algorithm for Lusztig's function $ \aff:\sn\rar\mathbb{N} $. In next subsection we will give a similar algorithm for $ \aff(\lambda) $, $ \lambda\in\hs  $ (see Definition \ref{def:a_lambda}).

\subsection{ Algorithm for $ \aff(\lambda) $}
Recall that for $\lambda\in\hs  $ we write $ \lambdarho$. One can see that $ \lambda $ is an integral weight  if and only if $ \lambda_i-\lambda_j\in \mathbb{Z} $ for all $ i,j $, and $ \lambda $ is antidominant if and only if $ \lambda_i\leq \lambda_j $ whenever $ \lambda_i-\lambda_j\in \mathbb{Z} $, $ i<j $. If $ \sigma\in\sn $, then $ \sigma.\lambda+\rho=\sigma(\lambda+\rho)=\sum_i\lambda_i\epsilon_{\sigma(i)}=\sum_{j}\lambda_{\sigma^{-1}(j)}\epsilon_j $.

If $ \lambda\in\Lambda $ is an integral weight, we associated  to the sequence  $ (\lambda_1,\cdots,\lambda_n) $ an semistandard Young tableau $ P(\lambda) $; see \S\ref{sec:schensted}.
\begin{Lem}Assume that $ \lambda $ is an integral weight.\label{lem:zzaA}
\begin{itemize}
\item [(i)] There is a unique $\sigma_\lambda\in\sn $ such that for $ i<j $\begin{equation}\label{eq:zz1}
\lambda_i\leq \lambda_j \text{ if and only if } \sigma_\lambda(i)<\sigma_\lambda(j),
\end{equation}
\begin{equation}\label{eq:zz2}
\text{ and }\lambda_i> \lambda_j \text{ if and only if } \sigma_\lambda(i)>\sigma_\lambda(j).
\end{equation}
\item [(ii)]The element $ \sigma_\lambda\in\sn $ is  of minimal length such that $ \sigma_\lambda.\lambda  $ is antidominant.
\item [(iii)]$ P(\lambda) $ and $ P(\sigma_\lambda) $ have the same shape. Hence \begin{equation}
\aff(\lambda) =A(P(\lambda));
\end{equation}see Definition \ref{def:zay} for the definition of the function $ A $.
\end{itemize}
\end{Lem}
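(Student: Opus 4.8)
The plan is to treat the three parts in order, each building on the previous one.

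For part (i), I would construct $\sigma_\lambda$ as the standardization of the sequence $(\lambda_1,\dots,\lambda_n)$. Concretely, define a total order $\prec$ on the index set $\{1,\dots,n\}$ by declaring $i\prec j$ whenever $\lambda_i<\lambda_j$, or $\lambda_i=\lambda_j$ and $i<j$; this is the lexicographic order on the pairs $(\lambda_i,i)$, hence a genuine linear order. Let $\sigma_\lambda(i)$ be the rank of $i$ with respect to $\prec$. Checking the two conditions \eqref{eq:zz1}--\eqref{eq:zz2} is then immediate from the definition of $\prec$; note that, since $\sigma_\lambda(i)\ne\sigma_\lambda(j)$, these two conditions are logically equivalent, so only one needs verification. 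Uniqueness follows because the conditions prescribe, for every pair $i<j$, whether $\sigma_\lambda(i)<\sigma_\lambda(j)$ or $\sigma_\lambda(i)>\sigma_\lambda(j)$, and a permutation of $\{1,\dots,n\}$ is determined by all of these pairwise comparisons.

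For part (ii), I would first translate antidominance into a statement about coordinates: since $\lambda$ is integral, a weight $\mu$ is antidominant iff the entries of $\mu+\rho$ weakly increase, and because $\sigma.\lambda+\rho=\sum_j\lambda_{\sigma^{-1}(j)}\epsilon_j$, the condition ``$\sigma.\lambda$ antidominant'' reads $\lambda_{\sigma^{-1}(1)}\le\cdots\le\lambda_{\sigma^{-1}(n)}$. From this I deduce the key implication: if $i<j$ and $\lambda_i>\lambda_j$ then $\sigma(i)>\sigma(j)$ for every such $\sigma$, since otherwise $\sigma^{-1}$ would place $\lambda_i$ before $\lambda_j$ among the sorted coordinates, forcing $\lambda_i\le\lambda_j$. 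Recalling that $l(\sigma)$ equals the number of inversions of $\sigma$, this shows every inversion pair forced by $\lambda$ lies in $\mathrm{Inv}(\sigma)$, so $l(\sigma)\ge \#\{(i,j):i<j,\ \lambda_i>\lambda_j\}$. By part (i) this lower bound is exactly $l(\sigma_\lambda)$, and $\sigma_\lambda.\lambda$ is antidominant because the inversions of $\sigma_\lambda$ are precisely the pairs with $\lambda_i>\lambda_j$, i.e. $\sigma_\lambda$ sorts the coordinates increasingly; hence $\sigma_\lambda$ has minimal length, its uniqueness also following from Lemma \ref{lem:min-char}.

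For part (iii), the point is that $\sigma_\lambda$ is by construction the standardization of the word $(\lambda_1,\dots,\lambda_n)$, and standardization preserves the shape of the Schensted insertion tableau; thus $P(\lambda)$ and $P(\sigma_\lambda)$ have the same shape. I would derive this shape equality directly from part (i) via Greene's theorem: a subsequence at increasing positions is weakly increasing in $\lambda$ exactly when it is increasing in $\sigma_\lambda$, and strictly decreasing in $\lambda$ exactly when strictly decreasing in $\sigma_\lambda$, so the maximal sizes of unions of $k$ weakly increasing (resp. strictly decreasing) subsequences coincide for the two words, and these invariants determine the common shape. Once the shapes agree I combine the pieces: since $\lambda$ is integral we have $W_{[\lambda]}=W=\sn$ and $\aff_{[\lambda]}=\aff$, and by part (ii) the minimal-length $w$ with $w^{-1}.\lambda$ antidominant is $w=\sigma_\lambda^{-1}$; therefore $\aff(\lambda)=\aff(\sigma_\lambda^{-1})=\aff(\sigma_\lambda)=A(P(\sigma_\lambda))$ by Definition \ref{def:a_lambda}, Lemma \ref{lem:Hecke}(ii) and Proposition \ref{pr:avalue}. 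Since $A(Y)$ depends only on the column lengths of $Y$, the shape equality yields $A(P(\sigma_\lambda))=A(P(\lambda))$.

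The main obstacle is this shape-preservation claim: identifying $\sigma_\lambda$ with the standardization is routine, but justifying that $P(\lambda)$ and $P(\sigma_\lambda)$ have the same shape requires either invoking the classical fact that standardization commutes with Schensted insertion or, as above, carefully matching Greene's longest-chain invariants for the two words, where one must be attentive to the weak/strict distinction between rows and columns.
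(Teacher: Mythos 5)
Your proof is correct, and its global skeleton coincides with the paper's: construct $\sigma_\lambda$ as the standardization of $(\lambda_1,\dots,\lambda_n)$, show it sorts the coordinates and is of minimal length, establish that $P(\lambda)$ and $P(\sigma_\lambda)$ have the same shape, and then run the chain $\aff(\lambda)=\aff(\sigma_\lambda^{-1})=\aff(\sigma_\lambda)=A(P(\sigma_\lambda))=A(P(\lambda))$ via Definition \ref{def:a_lambda}, Lemma \ref{lem:Hecke}(ii) and Proposition \ref{pr:avalue} --- exactly the paper's final display. The two places where you diverge are precisely the steps the paper compresses. For minimality in (ii), the paper argues via the stabilizer: it reduces to showing $\sigma_\lambda s_\alpha>\sigma_\lambda$ whenever $s_\alpha.\lambda=\lambda$, which by \eqref{eq:zz1} is immediate, but this implicitly leans on the theory of minimal coset representatives for the (dot-action) stabilizer of $\lambda$; you instead give a direct inversion count, showing every $\sigma$ with $\sigma.\lambda$ antidominant must contain all pairs $(i,j)$, $i<j$, $\lambda_i>\lambda_j$ among its inversions, so $l(\sigma)\geq l(\sigma_\lambda)$. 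Your argument is more elementary and fully self-contained (it even gives uniqueness for free, since the inversion set determines the permutation, though citing Lemma \ref{lem:min-char} as you do is also fine), at the cost of being special to type $A$; the paper's is shorter and generalizes to any Weyl group. For (iii), the paper simply asserts the shape equality ``by the construction of $\sigma_\lambda$ and the Schensted insertion algorithm,'' whereas you justify it by matching Greene invariants (weakly increasing subsequences of $\lambda$ versus increasing ones of $\sigma_\lambda$, strictly decreasing versus strictly decreasing); this is a genuine and welcome filling of a gap the paper waves through, and your attention to the weak/strict distinction is exactly the point that makes the assertion true for semistandard insertion.
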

\begin{proof}
(i) is obvious. Let $ \sigma=\sigma_\lambda $. Then $ \sigma.\lambda+\rho=(\lambda_{\sigma^{-1}(1)},\lambda_{\sigma^{-1}(2)},\cdots,\lambda_{\sigma^{-1}(n)}) $. If  $ a=\sigma(p)<\sigma(q)=b $ then by \eqref{eq:zz1}, \eqref{eq:zz2} we always have $ \lambda_p\leq \lambda_q$, i.e. $\lambda_{\sigma^{-1}(a)}\leq\lambda_{\sigma^{-1}(b)}  $. Thus $ \sigma.\lambda $ is antidominant. To prove $ \sigma $ is of minimal length we only need to prove that when $ s_\al.\lambda=\lambda $ with $ \al\in\Phi^+ $ we have $ \sigma s_\al>\sigma $, or equivalently that if $ \lambda_i=\lambda_j $, $ i<j $ then $ \sigma(\epsilon_i-\epsilon_j) \in \Phi^+$, i.e.  $ \sigma(i)<\sigma(j) $. But this is just \eqref{eq:zz1}.

(iii). By the construction of $ \sigma_\lambda $ and the Schensted insertion algorithm, we see immediately that $ P(\lambda) $ and $ P(\sigma_\lambda) $ have the same shape.  Then\[
\aff(\lambda)\overset{\text{Def. } \ref{def:a_lambda}}{=} \aff(\sigma_\lambda^{-1})\overset{\text{Lem. \ref{lem:Hecke}(ii)}}{=} \aff(\sigma_\lambda) \overset{\text{Prop. \ref{pr:avalue}}}{=}A(P(\sigma_\lambda))=A(P(\lambda)).
\]
 This completes the proof.
  \end{proof}

Now we turn to the general case where $ \lambda $ is not necessarily integral.

We fix a weight $ \lambda\in\hs $. Denote by $ \mf{X}_\lambda $ or just $ \mf{X} $ the set of subsets $ X $ of $ [1,n] $ such that $ i,j $ are in $ X $ if and only if $ \lambda_i-\lambda_j\in\mathbb{Z} $. If $ X=\{i_1\leq i_2\leq\cdots\leq i_k \} $ then denote by $ \lambda_X$ the weight such that $\lambda_X+\rho' =(\lambda_{i_1},\lambda_{i_2},\cdots,\lambda_{i_k})$, where $ \lambda_X $  is viewed as an integral weight of $ \mf{sl}(k) $ and $ \rho' $ is the half sum of positive roots of $ \mf{sl}(k) $.

Then we associate  to $ \lambda $ the set $ P(\lambda) $, which consists of all the Young tableaux $ P(\lambda_X) $ with $ X\in\mf{X} $. And we define $ A(P(\lambda)) $ to be $ \sum_{X\in\mf{X}}A(P(\lambda_X)) $.

\begin{Thm}\label{thm:main2}
Assume that $ \mf{g}=\mf{sl}(n) $, $ \lambda\in\hs $. Then \[
\aff(\lambda)=A(P(\lambda)).
\]Hence by Proposition \ref{pr:main1}, we have\[
\gkd L(\lambda)=\frac{n(n-1)}2-A(P(\lambda)).
\]This gives rise to  a combinatorial algorithm for $ \gkd L(\lambda) $.
\end{Thm}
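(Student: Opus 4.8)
The plan is to reduce the general case to the integral case already settled in Lemma \ref{lem:zzaA}, by exploiting the product decomposition of the integral Weyl group $W_{[\lambda]}$ together with the additivity of Lusztig's $\aff$-function across Coxeter factors.

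First I would pin down the structure of $W_{[\lambda]}$. Since $\bil{\lambda+\rho}{\epsilon_i-\epsilon_j}=\lambda_i-\lambda_j$, the root subsystem $\Phi_\lambda$ consists of those $\epsilon_i-\epsilon_j$ with $\lambda_i-\lambda_j\in\mathbb{Z}$, so it splits as a disjoint union $\Phi_\lambda=\bigsqcup_{X\in\mf{X}}\Phi_X$, where $\Phi_X=\{\epsilon_i-\epsilon_j\mid i,j\in X\}$ is an irreducible root system of type $A_{|X|-1}$ supported on the congruence class $X$. Correspondingly $W_{[\lambda]}=\prod_{X\in\mf{X}}\mf{S}_X$ is a direct product of Coxeter groups, where $\mf{S}_X$ permutes the indices in $X$; its simple reflections are the $s_{\epsilon_{i_t}-\epsilon_{i_{t+1}}}$ for consecutive elements of $X=\{i_1<\cdots<i_k\}$, so the order-preserving bijection $X\to[1,|X|]$ induces an isomorphism of Coxeter groups $\mf{S}_X\cong\mf{S}_{|X|}$, hence of Hecke algebras and of $\aff$-functions.

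Next I would show that the element $w$ defining $\aff(\lambda)$ factors through this product. A key preliminary observation is that antidominance of $w^{-1}.\lambda$ only constrains the roots of $\Phi_\lambda^+$: for $\alpha\notin\Phi_\lambda$ one has $\bil{w^{-1}.\lambda+\rho}{\alpha}=\bil{\lambda+\rho}{w\alpha}$, which is congruent to $\bil{\lambda+\rho}{\alpha}$ modulo $\mathbb{Z}$ (because $w\in W_{[\lambda]}$ moves $\alpha$ by an integral combination of $\Phi_\lambda$), hence lies outside $\mathbb{Z}$ and in particular is never a positive integer. Therefore the condition ``$w^{-1}.\lambda$ antidominant'' decouples into one antidominance condition inside each block $X$, namely $\lambda_{w(j)}\le\lambda_{w(j')}$ for all $j<j'$ in $X$. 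Writing $w=(w_X)_{X}$ with $w_X\in\mf{S}_X$ and using that length in a direct product of Coxeter groups is the sum of the lengths of the components, the minimal-length $w$ is exactly the one whose $X$-component $w_X$ is of minimal length making the $X$-block antidominant. A direct coordinate computation, using $(\sigma.\mu+\rho)_j=\mu_{\sigma^{-1}(j)}$, shows that under $\mf{S}_X\cong\mf{S}_{|X|}$ this $w_X$ corresponds precisely to $\sigma_{\lambda_X}^{-1}$ from Lemma \ref{lem:zzaA} applied to the integral $\mf{sl}(|X|)$-weight $\lambda_X$.

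Finally I would assemble the pieces. Since $W_{[\lambda]}$ is a direct product, Lemma \ref{lem:Hecke}(v) gives $\aff_{[\lambda]}(w)=\sum_{X\in\mf{X}}\aff_X(w_X)$, where $\aff_X$ is the $\aff$-function of $\mf{S}_X$. For each block, Lemma \ref{lem:Hecke}(ii) together with Lemma \ref{lem:zzaA}(iii) yields $\aff_X(w_X)=\aff(\sigma_{\lambda_X}^{-1})=\aff(\sigma_{\lambda_X})=A(P(\lambda_X))$. Summing over $X$ and recalling $A(P(\lambda))=\sum_{X\in\mf{X}}A(P(\lambda_X))$ gives $\aff(\lambda)=A(P(\lambda))$, and the formula for $\gkd L(\lambda)$ then follows from Proposition \ref{pr:main1}. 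I expect the main obstacle to be the bookkeeping in the middle step: verifying that the global minimal-length antidominance element really is the product of the per-block minimal-length elements (resting on additivity of length and the decoupling of antidominance), and that the re-indexing $X\cong[1,|X|]$ matches $w_X$ with $\sigma_{\lambda_X}^{-1}$ compatibly with both the $\aff$-function and the tableau $P(\lambda_X)$; once this reduction is in place, Lemma \ref{lem:Hecke}(v) and the integral case do the real work.
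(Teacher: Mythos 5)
Your proposal is correct and follows essentially the same route as the paper's own proof: decompose $W_{[\lambda]}=\prod_{X\in\mf{X}}W_X$ over the congruence classes, observe that the minimal-length element $w$ with $w^{-1}.\lambda$ antidominant factors as $(w_X)_X$ with each $w_X$ minimal in its block, then combine Lemma \ref{lem:Hecke}(v) with the integral case (Lemma \ref{lem:zzaA}) and sum. The only difference is that you supply the justifications (the decoupling of antidominance outside $\Phi_\lambda$, the additivity of length, and the identification of $w_X$ with $\sigma_{\lambda_X}^{-1}$) which the paper states without proof.
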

\begin{proof}
First we have \[
\Wsmall=\prod_{X\in\mf{X}} W_X
\]where $ W_X $ is the subgroup of $ W=\sn $ generated by $ (i,j) $ with $ i,j\in X $. Then $ w $ is of minimal length in $ \Wsmall $ such that $ w^{-1}.\lambda $ is antidominant if and only if $ w_X $ is of minimal length in $ W_X $ such that $ w_X^{-1}.\lambda_X $ is antidominant, where $ w=(w_X)_{X\in\mf{X}} $. Then we have
\begin{align*}
	\aff(\lambda)&=\aff_{[\lambda]}(w)\overset{\text{Lem. \ref{lem:Hecke}(v)}}{=}\sum_{X\in\mf{X}} \aff_{[\lambda_X]}(w_X)\\&=\sum_{X\in\mf{X}}\aff(\lambda_X)\overset{\text{Lem. \ref{lem:zzaA}}}{=}\sum_{X\in\mf{X}}A(P(\lambda_X))=A(P(\lambda)).
\end{align*}
This completes the proof of the theorem.
  \end{proof}

\section{$ \gkd L(\lambda) $ for $ (p,q) $-dominant weights}

Let $ \mf{g} =\mf{sl(n)}$, $ p+q=n $ with $ p,q\in\mathbb{Z}_{\geq1} $.
\begin{Defff}
We say $ \lambda \in\hs$ is \textit{$ (p,q) $-dominant} if
  $ \lambda_i-\lambda_j\in\mathbb{Z}_{>0} $ for all $ i,j  $ such that \(
  1\leq i<j\leq p \text{ or } p+1\leq i<j\leq p+q
\),
 where $ \lambdarho $. In particular, $ \lambda_1>\lambda_2>\cdots>\lambda_p $ and $ \lambda_{p+1}>\lambda_{p+2}>\cdots>\lambda_{p+q} $.
\end{Defff}

The results in this section give rise to a quick algorithm for $ \gkd L(\lambda) $ when $ \lambda $ is a  $  (p,q) $-dominant weight.

\begin{Thm}\label{pr:pqw}
Assume that $ \lambda\in\hs $ is $ (p,q) $-dominant. \begin{itemize}
\item [(i)] If $ \lambda_1-\lambda_{p+1}\in\mathbb{Z} $, i.e. $ \lambda $ is an integral weight, then $ P(\lambda) $ is a Young tableau with at most two columns. And in this case $ \gkd L(\lambda)=m(n-m) $ where $ m $ is the number of entries in the second column of $ P(\lambda) $.
\item [(ii)] If $ \lambda_1-\lambda_{p+1}\notin\mathbb{Z} $, then $ P(\lambda) $ consists of two Young tableaux with   single column:
\[\tableau{\lambda_p\\\vdots\\\lambda_2\\\lambda_1}\qquad \tableau{\lambda_{p+q}\\\vdots\\\lambda_{p+2}\\\lambda_{p+1}}.\]
And in this case $ \gkd L(\lambda)=pq $.
\end{itemize}
\end{Thm}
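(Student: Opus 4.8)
The plan is to deduce both parts directly from Theorem \ref{thm:main2}, so that the computation of $\gkd L(\lambda)$ is reduced to reading off the shapes of the Schensted tableaux attached to $\lambda$. The decisive observation is that for a $(p,q)$-dominant weight, $\lambda+\rho$ is the concatenation of the two \emph{strictly decreasing} sequences $\lambda_1>\cdots>\lambda_p$ and $\lambda_{p+1}>\cdots>\lambda_n$, and that the Schensted insertion of a strictly decreasing sequence produces a single-column tableau. Throughout I will use the elementary identity that for $a+b=n$ one has $\frac{n(n-1)}{2}-\frac{a(a-1)}{2}-\frac{b(b-1)}{2}=ab$, which follows at once from $n(n-1)-a(a-1)-b(b-1)=(a+b)(a+b-1)-a(a-1)-b(b-1)=2ab$.

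For part (ii), I would first identify the set $\mathfrak{X}_\lambda$. Since $\lambda_i-\lambda_j\equiv \lambda_1-\lambda_{p+1}\pmod{\mathbb{Z}}$ whenever $i\leq p<j$, the hypothesis $\lambda_1-\lambda_{p+1}\notin\mathbb{Z}$ forces $\mathfrak{X}_\lambda=\{\{1,\dots,p\},\{p+1,\dots,n\}\}$, while within each block all differences lie in $\mathbb{Z}_{>0}$. Each of $\lambda_{\{1,\dots,p\}}$ and $\lambda_{\{p+1,\dots,n\}}$ is thus a strictly decreasing sequence, so by the insertion algorithm of \S\ref{sec:schensted} its tableau is a single column, of lengths $p$ and $q$ respectively; this yields the two displayed tableaux. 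Hence $A(P(\lambda))=\frac{p(p-1)}{2}+\frac{q(q-1)}{2}$, and the identity above gives $\gkd L(\lambda)=\frac{n(n-1)}{2}-A(P(\lambda))=pq$.

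For part (i), the congruence hypothesis makes $\lambda$ integral, so $P(\lambda)$ is the single Schensted tableau of $(\lambda_1,\dots,\lambda_n)$. To bound the number of columns I would invoke Schensted's theorem in its semistandard form: with the strict-bumping convention of \S\ref{sec:schensted}, the length of the first row of $P(\lambda)$ equals the length of the longest weakly increasing subsequence of $(\lambda_1,\dots,\lambda_n)$. A weakly increasing subsequence $\lambda_{i_1}\leq\lambda_{i_2}\leq\cdots$ with $i_1<i_2<\cdots$ can contain at most one index from $\{1,\dots,p\}$ and at most one from $\{p+1,\dots,n\}$, since each block is strictly decreasing in value; hence its length is at most $2$ and $P(\lambda)$ has at most two columns. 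Writing $m$ for the number of boxes in the second column, the first column has $n-m$ boxes, so $A(P(\lambda))=\frac{(n-m)(n-m-1)}{2}+\frac{m(m-1)}{2}$, and the identity with $a=n-m$, $b=m$ gives $\gkd L(\lambda)=m(n-m)$.

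The main obstacle is the combinatorial input in part (i): pinning down that the first-row length of $P(\lambda)$ computes the longest weakly increasing subsequence \emph{in the precise bumping convention adopted here}. If one prefers a self-contained argument to citing Schensted's theorem, a short induction on the insertion process suffices: an inserted element smaller than every entry of the current first row begins a bumping chain that descends the first column without lengthening the first row, whereas the first row can only grow when the inserted element is at least the current rightmost first-row entry; tracking this across the two strictly decreasing blocks shows the first row never exceeds length two. The remaining steps—determining $\mathfrak{X}_\lambda$, recognizing the single-column tableaux, and the final arithmetic—are routine.
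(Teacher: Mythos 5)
Your proposal is correct and takes essentially the same route as the paper: the paper's own proof consists of the two assertions that the shape of $P(\lambda)$ ``follows directly from the definition'' of the insertion algorithm and that the dimension formula then follows from Theorem \ref{thm:main2}. Your write-up simply supplies the details the paper leaves implicit---the identification of $\mathfrak{X}_\lambda$, the at-most-two-column shape via Schensted's theorem on longest weakly increasing subsequences, and the arithmetic identity $\frac{n(n-1)}{2}-\frac{a(a-1)}{2}-\frac{b(b-1)}{2}=ab$ for $a+b=n$---all of which are correct.
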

\begin{proof}
The shape of $ P(\lambda) $ follows directly from the definition. And the \gk follows from Theorem \ref{thm:main2}.
  \end{proof}

\subsection{An algorithm for $ m $}By the above theorem, we only need to determine $ m $ in the case where $ \lambda $ is integral and $ (p,q) $-dominant.
\begin{Prop}\label{pr:main3}
Assume that  $ \lambda $ is an integral and $ (p,q) $-dominant weight. Then we have the following three cases.

\textit{Case (i)} $ \lambda_{p+1}\geq\lambda_p>\lambda_{p+q} $. Then we can find an integer $ 1\leq k\leq q $ such that $ \lambda_p\leq\lambda_{p+k} $ and $ \lambda_p>\lambda_{p+k+i} $ for all $ i>0 $. Let $ \lambda' $ be the weight corresponding to  $ (\lambda_1,\lambda_2,\cdots,\lambda_{p-1},\lambda_{p+1},\cdots,\lambda_{p+k-1}) $, i.e. deleting the entries $ \lambda_p$, $ \lambda_{p+k+j}, j\geq0 $.
\begin{center}
\begin{tikzpicture}(0,0)
\put(0,0){$\lambda_1$}
\put(15,0){$ \lambda_2 $}
\put(30,0){$ \cdots $}
\put(50,0){$ \lambda_p $}
\put(-20,-15){$ \lambda_{p+1} $}
\put(5,-15){$ \cdots $}
\put(20,-15){$ \cdots $}
\put(35,-15){$ \lambda_{p+k} $}
\put(70,-15){$ \lambda_{p+k+1} $}
\put(100,-15){$ \cdots $}
\put(130,-15){$ \lambda_{p+q} $};
\draw(2.2,.4)--(2.2,-0.8);
\draw (1.7,-.2) rectangle (2.1,.3);
\draw (1.15,-.7) rectangle (1.85,-.25);
\draw (2.35,-.7) rectangle (5.3,-.25);
\end{tikzpicture}
\end{center}
Then $ P(\lambda) $ is obtained from $ P(\lambda') $ in the following way.
\begin{center}
\begin{tikzpicture}
\node (a) at (-0.5,1.5) {\begin{tikzpicture}
\draw (0,0) rectangle (1,1.6) ;
\node at (0.5,1.3) {$
\lambda_{p+q} $};

\node at (0.5,.8) {$
\vdots  $};

\node at (0.5,.3) {$
\lambda_{p+k+1}$};
\end{tikzpicture}};
\node (b) at (1.5,1.5) {\begin{tikzpicture}
\draw (0,0) rectangle (.8,.4);\node at (0.4,0.2) {$\lambda_{p+k}$};
\end{tikzpicture}};

\node (b) at (-2,1.5) {\begin{tikzpicture}
\draw (0,0) rectangle (.8,.4);\node at (0.4,0.2) {$\lambda_p$};
\end{tikzpicture}};

\node (c) at (0,-0.8) {\begin{tikzpicture}
\draw  (0,-1.5) -- (0,1)--(2,1)--(2,-0.3)--(1,-0.3)--(1,-1.5)--(0,-1.5);\node at(-1,0){$ P(\lambda')= $};
\draw[pattern=north west lines] (0,-1.5) rectangle (1,1);\draw[pattern=north east lines] (1,-.3) rectangle (2,1);
\end{tikzpicture}};\draw [->] (2.5,0) -- (3.5,0);
\node  at (5.5,0){\begin{tikzpicture}
\draw (0,0)--(0,3.5)--(2,3.5)--(2,1.9)--(1,1.9)--(1,0);\draw(1,1.9)--(1,3.5);\draw(0,2.2)--(1,2.2);\draw(1,3.1)--(2,3.1);\node (a) at (0.5,2.39) {$
\lambda_{p+k+1}$};
\node (a) at (0.5,3.3) {$
\lambda_{p+q} $};
\node (a) at (0.5,2.85) {$
\vdots
$};\node  at (1.5,3.3) {$
\lambda_{p+k}$};
\node  at (.5,1.99) {$
\lambda_{p}$};\node  at (2.7,1.8) {$
=P(\lambda)$};\draw[pattern=north west lines] (0,-0.4) rectangle (1,1.8);
\draw[pattern=north east lines] (1,1.9) rectangle (2,3.1);
\end{tikzpicture}};
\end{tikzpicture}
\end{center}
In other words, the second column of $ P(\lambda) $ can be obtained from that of $ P(\lambda') $ by adding a box containing $ \lambda_{p+k} $ on the top.

\textit{Case (ii)} 	 $ \lambda_p\leq \lambda_{p+q} $. Let $ \lambda' $ be the weight corresponding to  $$ (\lambda_1,\lambda_2,\cdots,\lambda_{p-1},\lambda_{p+1},\cdots,\lambda_{p+q-1}) , $$ i.e.   deleting the entries $ \lambda_p$, $ \lambda_{p+q}$ from $ \lambda $. Then $ P(\lambda) $ is obtained from $ P(\lambda') $ in the following way.
\begin{center}
\begin{tikzpicture}
%
%
\node (b) at (1.5,1.5) {\begin{tikzpicture}
\draw (0,0) rectangle (.8,.4);\node at (0.4,0.2) {$\lambda_{p+q}$};
\end{tikzpicture}};

\node (b) at (0,1.5) {\begin{tikzpicture}
\draw (0,0) rectangle (.8,.4);\node at (0.4,0.2) {$\lambda_p$};
\end{tikzpicture}};

\node (c) at (0,-0.8) {\begin{tikzpicture}
\draw  (0,-1.5) -- (0,1)--(2,1)--(2,-0.3)--(1,-0.3)--(1,-1.5)--(0,-1.5);\node at(-1,0){$ P(\lambda')= $};
\draw[pattern=north west lines] (0,-1.5) rectangle (1,1);\draw[pattern=north east lines] (1,-.3) rectangle (2,1);
\end{tikzpicture}};\draw [->] (2.5,0) -- (3.5,0);
\node  at (5.2,-1.5){\begin{tikzpicture}
\begin{tikzpicture}
\draw  (0,-1.5) -- (0,1.4)--(2,1.4)--(2,-0.3)--(1,-0.3)--(1,-1.5)--(0,-1.5);\draw (1,1.4)--(1,1);
\draw[pattern=north west lines] (0,-1.5) rectangle (1,1);\draw[pattern=north east lines] (1,-.3) rectangle (2,1);\node at (0.5,1.2) {$ \lambda_p $};\node at (1.5,1.2) {$ \lambda_{p+q} $};
\end{tikzpicture}\node  at (.7,1.5) {$
=P(\lambda)$};
\end{tikzpicture}};
\end{tikzpicture}
\end{center}
In other words, the second column of $ P(\lambda) $ is  obtained from that of $ P(\lambda') $ by adding a box containing  $ \lambda_{p+q} $ on the top.

\textit{Case (iii)} $ \lambda_p\geq \lambda_{p+1} $. Then $ P(\lambda) $ is just a Young tableau with single column.
\end{Prop}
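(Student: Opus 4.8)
The plan is to compute $P(\lambda)$ by running the Schensted insertion of the word $(\lambda_1,\dots,\lambda_n)$ directly, exploiting that the two blocks $\lambda_1>\cdots>\lambda_p$ and $\lambda_{p+1}>\cdots>\lambda_{p+q}$ are each \emph{strictly} decreasing. I first record two preliminary facts. Inserting the first block produces a single column $C$ whose entries, read top to bottom, are $\lambda_p<\lambda_{p-1}<\cdots<\lambda_1$: each new term is smaller than all previous ones, so it bumps the whole column down by one box. Moreover, since the word is a concatenation of two strictly decreasing runs, any weakly increasing subsequence meets each run in at most one term, so the longest weakly increasing subsequence has length at most $2$; as this length is the first-row length of $P(\lambda)$, the tableau has at most two columns. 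Hence throughout the insertion I may speak of a first column $F$ and a second column $S$, and every insertion step stays within these two columns.

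The engine of the proof is a ``peeling'' analysis of insertions into $C$ by a strictly decreasing sequence $\lambda_{p+1}>\cdots>\lambda_{p+t}$ all of whose terms are $\geq\lambda_p=\min C$. Because every such term is $\geq\lambda_p=F_1$, the top box $F_1=\lambda_p$ is never bumped; the first term appends as the top $S_1$ of the second column, and each later term $\lambda_{p+i}$ lands in the second column by bumping the current top $S_1=\lambda_{p+i-1}$ out of row $1$ into row $2$. Since row insertion only cascades downward, row $1$ settles to $(F_1,S_1)=(\lambda_p,\lambda_{p+t})$, while the bumped terms $\lambda_{p+1},\dots,\lambda_{p+t-1}$ are delivered, in this order, to the tableau on rows $\geq 2$, whose initial state is the column $(\lambda_{p-1},\dots,\lambda_1)$. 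Thus rows $\geq 2$ become exactly $P(\lambda_1,\dots,\lambda_{p-1},\lambda_{p+1},\dots,\lambda_{p+t-1})$. Taking $t=q$ (legitimate in Case (ii), where $\lambda_p\le\lambda_{p+q}$ forces every second-block term $\ge\lambda_p$) gives Case (ii) at once: the top row is $(\lambda_p,\lambda_{p+q})$ and the remainder is $P(\lambda')$, which is precisely the asserted prepending of $\lambda_p$ to $F$ and of $\lambda_{p+q}$ to $S$.

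Next I treat insertions by terms $<\lambda_p$. If the first-column top equals $\lambda_p$ (or exceeds the incoming term $b$) and $b<\lambda_p$, then $b$ bumps the first-column top, and since $F$ is strictly increasing the bumped value cascades strictly down the first column, at each visited row bumping the (larger) first-column entry and hence never entering the second column. Inserting the tail $\lambda_{p+k+1}>\cdots>\lambda_{p+q}$ (all $<\lambda_p$) therefore merely prepends these values to the first column, from the top in the order $\lambda_{p+q},\dots,\lambda_{p+k+1}$, leaving $S$ untouched. Case (iii) is the instance where $S$ stays empty (all of the second block lies below $\lambda_p$), giving a single column. Case (i) combines the two mechanisms: Phase A inserts $\lambda_{p+1},\dots,\lambda_{p+k}$ (all $\geq\lambda_p$) and, by the peeling analysis with $t=k$, yields a tableau whose top row is $(\lambda_p,\lambda_{p+k})$ and whose rows $\geq 2$ are $P(\lambda')$; Phase B inserts the tail, prepending $\lambda_{p+q},\dots,\lambda_{p+k+1}$ atop the first column. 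Reading off the columns reproduces the displayed description of $P(\lambda)$ exactly.

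The step demanding the most care — and the main obstacle — is the peeling analysis: one must confirm that during Phase A no bumped term ever re-enters row $1$ (which follows from the downward-only nature of bumping together with the invariant $F_1=\lambda_p$), and that the bumped terms reach the lower tableau in their original relative order, so that that tableau is genuinely the insertion tableau of $(\lambda_1,\dots,\lambda_{p-1},\lambda_{p+1},\dots,\lambda_{p+k-1})$. A secondary point is the boundary bookkeeping when a second-block entry equals $\lambda_p$: one checks that such an entry is still routed into the second column (so Case (i)/(ii), not (iii), governs it), and that in Phase B the value $\lambda_p$ bumped from the top is strictly less than the new $F_2$, which equals $\min\lambda'>\lambda_p$, so that the second column is indeed left undisturbed.
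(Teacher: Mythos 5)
Your proposal is correct and takes essentially the same approach as the paper: the paper's entire proof of this proposition is the one-line assertion that it ``directly follows from Schensted insertion algorithm,'' and your peeling/cascade analysis (row $1$ stabilizing to $(\lambda_p,\lambda_{p+k})$ while bumped entries are delivered in order to rows $\geq 2$, and sub-$\lambda_p$ entries cascading straight down the first column) is exactly the bookkeeping that remark leaves implicit. Your boundary observation --- that an entry equal to $\lambda_p$ is appended to the second column rather than bumping, since the paper's convention only bumps \emph{strictly} larger entries --- is also the right reading, and in fact shows that Case (iii) should be understood with strict inequality $\lambda_p>\lambda_{p+1}$, since at $\lambda_p=\lambda_{p+1}$ it overlaps with Cases (i)/(ii), which then give the correct (two-column) answer.
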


\begin{proof}
This directly follows from Schensted insertion algorithm (see \S\ref{sec:schensted}).
  \end{proof}

This proposition gives rise to an algorithm for $ P(\lambda) $ in the following way.  First, we construct a sequence of weights $ \lambda(0),\lambda(1),\cdots,\lambda(l) $ for some $ l $. Here $ \lambda(0)=\lambda $. Assume that we have obtained $ \lambda(i) $  in some step. If the entries corresponding to $ \lambda(i) $ are not strictly decreasing, then we set $ \lambda(i+1)=\lambda(i)' $ as we do in Proposition \ref{pr:main3}(i)(ii). Otherwise, we set $ \lambda(i) $ to be the last weight $ \lambda(l) $.

Note that $ P(\lambda(l)) $ is an empty Young tableau or a Young tableau with single column. Then one can obtain   $ P(\lambda(l-1)) $ from $ P(\lambda(l)) $ by using (i) or (ii) of Proposition \ref{pr:main3}. Repeating this process, we get $ P(\lambda(i)) $ from $ P(\lambda(i+1)) $ for all $ i $, in particular  $ P(\lambda) =P(\lambda(0))$. From the process we know  that the number  $m  $ of entries in the second column of $ P(\lambda) $ is just $ l $ and $m\leq min(p,q)$. Then $ \gkd L(\lambda) $ also follows from this algorithm by using Theorem \ref{pr:pqw}(i). 

In fact, we do not need to write down the each $ P(\lambda(i)) $, since we can 
  immediately write down  the second column of  $ P(\lambda)  $ from $ \lambda(i) $'s; see the following example.

\begin{ex}\label{ex:alg}
Let  $ n=10, p=4, q=6 $ and $ \lambda+\rho=(6,5,3,2,9,8,7,4,2,1) $.

\begin{center}
\begin{tikzpicture}
\node at(0.1,0){\begin{tikzpicture} \node at (0,0){\begin{tikzpicture}
\node at (0,0){{ }};
\node at (.5,-.5){$ 9 $};
\node at (1,-.5){$ \underline{8} $};
\node at (1.5,-.5){$ \underline{7} $};
\node at (2,0){$ 6 $};
\node at (2.5,0){$ 5 $};
\node at (3,-.5){$ \underline{4} $};
\node at (3.5,0){$ 3 $};
\node at (4,0){$ 2 $};
\node at (4,-.5){$ \underline{2} $};
\node at (4.5,-.5){$ 1 $};
\end{tikzpicture}};
\end{tikzpicture}};
\draw 
(1.7,.5)--(1.7,-.5);
\draw
(.65,.5)--(.65,-.5);
\draw
(-.05,.5)--(-.05,0)--(-.95,0)--(-.95,-.5);
\draw
(-.55,.5)--(-.55,.07)--(-1.45,.07)--(-1.45,-.5);\end{tikzpicture}
\end{center}
Using the above notation, we have  \begin{eqnarray*}
\lambda(1)&\rar&(6,5,3,9,8,7,4),\\
\lambda(2)&\rar&(6,5,9,8,7),\\
\lambda(3)&\rar&(6,9,8),\\
\lambda(4)&\rar&(9).
\end{eqnarray*}
 One can  see that the second column of $ P(\lambda) $ is $ \{8,7,4,2 \} $. Hence we have $$  P(\lambda)=\young(12,24,37,58,6,9)\qquad \gkd L(\lambda)=4\times7=24. $$
\end{ex}

\subsection{A combinatorial model}
Let $ \lambda $ be an integral and $ (p,q) $-dominant weight such that $ \lambdarho $.
Let $ a_1 $ be the number of $ \lambda_{p+i},i\geq1 $ such that $ \lambda_{p+i}\geq\lambda_1 $ and let $ b_1 $ be the number of $ \lambda_j,j\geq1 $ such that $ \lambda_j>\lambda_{p+a_1+1} $. Inductively, let $ a_{k+1} $ be the number  such that \[
a_{k+1}=\begin{cases}
 \#\{\lambda_{p+i} \mid \lambda_{\sum_{t=1}^{k}b_t}>\lambda_{p+i}\geq\lambda_{(\sum_{t=1}^{k}b_t)+1} \}, &\text{ if }(\sum_{t=1}^{k}b_t)+1\leq p;\\
 \#\{\lambda_{p+i} \mid \lambda_{\sum_{t=1}^{k}b_t}>\lambda_{p+i} \}, &\text{ if }\sum_{t=1}^{k}b_t=p;\\
 0, &\text{ if }\sum_{t=1}^{k}b_t> p.
\end{cases}
\]  and $ b_{k+1} $ the number such that \[b_{k+1}=\begin{cases}
\#\{\lambda_{j}\mid \lambda_{p+\sum_{t=1}^{k+1}a_t}\geq\lambda_j>\lambda_{p+(\sum_{t=1}^{k+1}a_t)+1} \} & \text{ if }(\sum_{t=1}^{k+1}a_t)+1\leq q;\\
\#\{\lambda_{j}\mid \lambda_{p+\sum_{t=1}^{k+1}a_t}\geq\lambda_j \} & \text{ if }\sum_{t=1}^{k+1}a_t= q;\\
0 & \text{ if }\sum_{t=1}^{k+1}a_t> q.
\end{cases}
\]Let $ r\in\mathbb{Z}_{\geq1} $ be the minimal integer such that  $ a_{r+1}=b_{r+1} =0$. Then we write\[
\xi(\lambda):=(a_1,b_1,a_2,b_2,\cdots,a_r,b_r).
\]
Note that $ a_i $ and $ b_i $ are positive except that $ a_1,b_{r} $ may be $ 0 $. For the weight $ \lambda $ in Example \ref{ex:alg}, we have $ \xi(\lambda) =(3,2,1,1,1,1,1,0)$.

From the definition of  $ \xi(\lambda) $ we have  the following  lemma.

\begin{Lem}\label{lem:xi}
If $ \lambda,\mu $ are integral and $ (p,q) $-dominant weights satisfying $ \xi(\lambda)=\xi(\mu) $, then $ P(\lambda) $ and $ P(\mu) $ have the same shape. In particular, $ \gkd L(\lambda)=\gkd L(\mu) $.
\end{Lem}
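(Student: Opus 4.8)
The plan is to collapse the whole statement to a single integer and then read that integer off from $\xi(\lambda)$. By Theorem \ref{pr:pqw}(i), for an integral $(p,q)$-dominant weight the tableau $P(\lambda)$ has at most two columns, so its shape is completely determined by the number $m$ of boxes in the second column, the first column having $n-m$ boxes. Hence it suffices to show that $m$ depends only on $\xi(\lambda)$; the equality $\gkd L(\lambda)=m(n-m)=\gkd L(\mu)$ will then be immediate from Theorem \ref{pr:pqw}(i). Since the number of boxes in the first column of a tableau produced by Schensted insertion (\S\ref{sec:schensted}) equals its number of rows, and that number equals the length of the longest \emph{strictly decreasing} subsequence of the inserted word, I would instead compute the length $c_1$ of the longest strictly decreasing subsequence of $(\lambda_1,\dots,\lambda_n)$ and use $m=n-c_1$.

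The key structural observation is that, because $\lambda$ is $(p,q)$-dominant, the ``black'' entries $\lambda_1>\dots>\lambda_p$ occupy the first $p$ positions and the ``white'' entries $\lambda_{p+1}>\dots>\lambda_{p+q}$ occupy the last $q$ positions. Any strictly decreasing subsequence therefore consists of a set of black entries followed by a set of white entries, and is strictly decreasing precisely when the smallest chosen black entry is strictly larger than the largest chosen white entry. To maximize the length one takes the top $b$ black entries $\lambda_1,\dots,\lambda_b$ together with every white entry strictly smaller than $\lambda_b$; comparing with the choice of no black entry gives
\[
c_1=\max\Big\{\,q,\ \max_{1\le b\le p}\big(b+\#\{\,j:\lambda_{p+j}<\lambda_b\,\}\big)\Big\}.
\]
I would justify this by the elementary exchange argument just sketched: including all of the top black entries never decreases the count and never violates the junction inequality.

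It then remains to show that each quantity $\#\{\,j:\lambda_{p+j}<\lambda_b\,\}$, equivalently $\#\{\,j:\lambda_{p+j}\ge\lambda_b\,\}=q-\#\{\,j:\lambda_{p+j}<\lambda_b\,\}$, is determined by $\xi(\lambda)$. Here I use the white-before-black tie convention built into the definition of $\xi$: the number of white entries with value $\ge\lambda_b$ is exactly the number of white entries preceding the $b$-th black entry when all entries are merged into one weakly decreasing line. By the definition of the numbers $a_l,b_l$, if the $b$-th black entry lies in the $k$-th black block, i.e. $\sum_{l<k}b_l<b\le\sum_{l\le k}b_l$, then this count equals $a_1+\dots+a_k$. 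Thus $\#\{\,j:\lambda_{p+j}\ge\lambda_b\,\}=\sum_{l\le k}a_l$ is a function of $\xi(\lambda)$ alone, and therefore so are $c_1$ and $m=n-c_1$. Since $\xi(\lambda)=\xi(\mu)$, we obtain $m(\lambda)=m(\mu)$, the two tableaux have the same two-column shape, and the asserted equality of Gelfand--Kirillov dimensions follows.

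The step I expect to be the main obstacle is the bookkeeping in the last paragraph: pinning down, with the correct treatment of coinciding black and white values, that ``the $b$-th black entry in decreasing order'' matches ``the $b$-th black ball from the left in the merged line'', and that the number of preceding white entries is exactly the partial sum $a_1+\dots+a_k$. Keeping the strict-versus-weak inequalities straight at the black--white junction (a strict decrease is required for the subsequence, whereas the tie convention defining $\xi$ records ``$\ge$'') is where a mismatched convention or an off-by-one could creep in; everything else is a direct consequence of Schensted's theorem and Theorem \ref{pr:pqw}(i).
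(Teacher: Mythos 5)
Your proof is correct, but it takes a genuinely different route from the paper's. The paper disposes of this lemma in one line, by appealing to Proposition \ref{pr:main3}: the recursive peeling procedure that builds $P(\lambda)$ branches only according to order comparisons between the black entries $\lambda_1,\dots,\lambda_p$ and the white entries $\lambda_{p+1},\dots,\lambda_{p+q}$, and that comparison data is exactly what $\xi(\lambda)$ records, so equal $\xi$ forces identical shape evolution step by step. You instead bypass Proposition \ref{pr:main3} entirely: you reduce the shape to the single number $m$ via Theorem \ref{pr:pqw}(i), invoke Schensted's classical theorem that the first-column length of the insertion tableau of \S\ref{sec:schensted} equals the length of the longest strictly decreasing subsequence of the word (a fact the paper never states explicitly), and then compute that length in closed form,
\[
c_1=\max\Bigl\{q,\ \max_{1\le b\le p}\bigl(b+\#\{j:\lambda_{p+j}<\lambda_b\}\bigr)\Bigr\},
\]
observing that each count $\#\{j:\lambda_{p+j}\ge\lambda_b\}=a_1+\cdots+a_{k(b)}$, where $k(b)$ is the black block containing the $b$-th black entry, is read off from $\xi(\lambda)$ alone (as are $p$, $q$, $n$). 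Your handling of the tie convention --- whites precede blacks on equal values, so the $\xi$-counts use ``$\ge$'' while the junction of a strictly decreasing subsequence needs ``$>$'' --- is the delicate point, and it is done correctly; the exchange argument maximizing over the lowest chosen black entry is also sound. What each approach buys: the paper's argument is shorter and stays entirely inside its own machinery, though it leaves the verification that the recursion depends only on $\xi$ implicit; yours is independent of Proposition \ref{pr:main3}, at the cost of importing one classical external theorem, and it yields an explicit closed formula for $m$ --- in effect a non-recursive counterpart of Theorem \ref{pr:main4}, which the paper only establishes afterwards via the ball-removal model.
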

\begin{proof}
This follows from Proposition \ref{pr:main3}.
  \end{proof}

The rest of this section is devoted to a combinatorial model, which enables us to obtain   $ m $   directly from $ \xi(\lambda) $.

Assume that we have $ p $ black balls and $ q $ white balls, which are arranged in a line such that from left to right there are $ a_1 $ white balls, $ b_1 $ black balls, $ a_2 $ white balls, \dots, $ b_r $ black balls. We call two balls an \textit{adjacent white-black pair} if they are adjacent with the left ball white and the right ball  black. We remove the \textit{adjacent} white-black pairs from this line of balls over and over again until there are no adjacent white-black  pairs, i.e the remaining black balls are all in the left side of every white ball. One can check that the number of pairs that are removed in this process are independent of the order, which is denoted by $ G_r $.

\begin{Thm}\label{pr:main4}
Let $ \lambda $ be an integral $ (p,q) $-dominant weight such that \[ \xi(\lambda)=(a_1,b_1,\cdots, a_r,b_r).\] Let $ m $ be the number of entries in the second column of $ P(\lambda) $. And let $ G_{k}, 1\leq k\leq r $ be the number of adjacent white-black pairs in the sequence $ (a_1,b_1,\cdots, a_k,b_k) $ of balls, described as above.
\begin{itemize}
\item [(i)] We have $m=G_r $. Hence $ \gkd L(\lambda) =G_r(n-G_r)$ by Theorem \ref{pr:pqw}.
\item [(ii)] There is a recursive formula \[
G_{k+1}=G_k+\min\{\sum_{i=1}^{k+1}a_i-G_k, b_{k+1} \}, \text{ for }k<r.
\]
\end{itemize}
\end{Thm}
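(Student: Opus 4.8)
The plan is to treat the two parts separately and glue them through Schensted's theorem. Part (ii) is purely a statement about the line of balls, which I will recast as a parenthesis-matching problem; part (i) is the bridge between the combinatorics of the balls and the shape of $P(\lambda)$, and it is where the real content lies.

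For part (ii), I would encode the line of balls as a word in two letters by reading it from left to right and writing ``('' for each white ball and ``)'' for each black ball. Then an adjacent white-black pair is exactly an adjacent substring ``()'', and removing all such pairs repeatedly is the standard reduction of a parenthesis word: the number $G_r$ of pairs removed is the number of matched parentheses, and the fully reduced word has the form $)^{s}(^{t}$ with $s$ unmatched black (close) letters and $t$ unmatched white (open) letters. The order-independence asserted in the text is the usual confluence of this reduction, so $G_r$ is well defined. To get the recursion I process the word segment by segment. Writing $\sigma_k$ for the word coming from $(a_1,b_1,\dots,a_k,b_k)$, confluence lets me first reduce $\sigma_k$ to $)^{s_k}(^{t_k}$ with $t_k=\sum_{i\le k}a_i-G_k$, and then append the block $(^{a_{k+1}})^{b_{k+1}}$. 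Since the leading closes $)^{s_k}$ never meet an open to their right as a removable ``()'', only the $t_k+a_{k+1}=\sum_{i\le k+1}a_i-G_k$ available opens can be matched against the $b_{k+1}$ new closes, producing $\min\{\sum_{i\le k+1}a_i-G_k,\;b_{k+1}\}$ new pairs. This is exactly the claimed recursion.

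For part (i), I would first recall that $P(\lambda)$ has (at most) two columns, of lengths $n-m$ and $m$, which follows from Theorem \ref{pr:pqw}(i) because $(\lambda_1,\dots,\lambda_n)$ is a concatenation of the two strictly decreasing runs $\lambda_1>\cdots>\lambda_p$ (black/compact) and $\lambda_{p+1}>\cdots>\lambda_n$ (white/noncompact). By Schensted's theorem, applied with the insertion convention of \S\ref{sec:schensted} (rows weakly increasing, columns strictly increasing), the length $n-m$ of the first column equals the length of the longest strictly decreasing subsequence of $(\lambda_1,\dots,\lambda_n)$ (write $\mathrm{LDS}$ for this length); hence $m=n-\mathrm{LDS}$. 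Because the sequence splits into the two decreasing runs, any strictly decreasing subsequence is a set $S_1$ of black values together with a set $S_2$ of white values with $\min S_1>\max S_2$, and the optimum has threshold form: $\mathrm{LDS}=\max_\theta\big(\#\{\text{black}>\theta\}+\#\{\text{white}\le\theta\}\big)$. Rewriting $\#\{\text{white}\le\theta\}=q-\#\{\text{white}>\theta\}$ turns this into $q+\max_\theta\big(\#\{\text{black}>\theta\}-\#\{\text{white}>\theta\}\big)$, and since $\{\text{value}>\theta\}$ ranges exactly over the prefixes of the decreasing-sorted ball line, the maximum equals $\max_{\text{prefix}}(\#\text{black}-\#\text{white})=s$, the number of unmatched black balls from part (ii). Therefore $\mathrm{LDS}=q+s$ and $m=n-q-s=p-s$; as $p=G_r+s$, this gives $m=G_r$, and $\gkd L(\lambda)=G_r(n-G_r)$ follows from Theorem \ref{pr:pqw}(i).

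The main obstacle I anticipate is the careful bookkeeping at step (i) around ties and strict-versus-weak inequalities: the ball line breaks ties by placing a white ball to the left of a black ball of equal value, and one must check that this convention is compatible both with the strictness required in a decreasing subsequence (so that a black and a white of equal value are never used together) and with the identification of threshold cuts $\{\text{value}>\theta\}$ with genuine prefixes of the ball line. I would verify that cutting within an equal-value group can only decrease $\#\text{black}-\#\text{white}$, so the prefix-maximum is attained at a clean threshold, matching the $\theta$-maximum exactly. A secondary point to nail down is that Schensted's theorem is being used with the correct variant (strictly decreasing subsequences govern the first column for this bumping rule), which I would confirm on the running example, together with the confluence used silently in part (ii), which follows from the standard diamond-lemma argument for ``()''-reduction.
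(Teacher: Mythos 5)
Your proof is correct; your part (ii) coincides with the paper's argument, but your part (i) takes a genuinely different route. The paper deduces (i) directly from Proposition \ref{pr:main3}: each step of that recursion deletes one black entry ($\lambda_p$) and one white entry while adding exactly one box to the second column, so $m$ equals the number of recursion steps, and these steps are matched against removals of adjacent white-black pairs. You bypass Proposition \ref{pr:main3} entirely, keeping only the two-column shape from Theorem \ref{pr:pqw}(i), and instead identify $n-m$ with the length of the longest strictly decreasing subsequence via Schensted's theorem; the threshold computation of that length, the reduction of thresholds to prefixes of the ball line (where your handling of the tie-breaking convention --- whites placed left of equal blacks, so that cuts inside an equal-value group are dominated by boundary cuts --- is exactly the point needing care, and your check is right), and the prefix-maximum characterization of the $s$ unmatched black balls then give $n-m=q+s$, hence $m=p-s=G_r$. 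What your route buys: a conceptual explanation of why the ball model computes $m$ (unmatched black balls measure the deficit of the longest strictly decreasing subsequence), with the case analysis of Proposition \ref{pr:main3} replaced by a classical theorem; the cost is that you must import the word version of Schensted's theorem (first column length equals the longest strictly decreasing subsequence for the bumping rule of \S\ref{sec:schensted}), which the paper never invokes, whereas the paper's argument stays entirely inside the insertion-algorithm toolkit it has already built, and moreover produces the second column explicitly rather than just its size. In part (ii), your parenthesis formulation and diamond-lemma justification of confluence make explicit the order-independence that the paper only asserts; otherwise the counting argument is identical.
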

\begin{proof}(i) follows from Proposition \ref{pr:main3}.

(ii). After deleting all adjacent white-black pairs in the sequence $(a_1,b_1,\cdots, a_k,b_k)  $, there are $ \sum_{=1}^{k}b_i-G_k $ black balls on the left and $ \sum_{i=1}^{k}a_i-G_k $ white balls on the right. If we take $ a_{k+1},b_{k+1} $ into account, then from left to right  we have $ \sum_{i=1}^{k}b_i-G_k $ black balls, $ \sum_{i=1}^{k+1}a_i-G_k $ white balls, and $ b_{k+1} $ black balls. Thus we in addition obtain $ \min \{\sum_{i=1}^{k+1}a_i-G_k,b_{k+1}\} $ adjacent white-black pairs. Then (ii) follows.
  \end{proof}

For the latter use, we formulate the following lemma.
\begin{Lem}\label{lem:transform}
Let $ \xi $ be a sequence of white and black balls as above. Then, locally, we can do the following two kinds of operations on $ \xi $
\begin{center}
 \begin{tikzpicture}\node at (0,0) 
 {\begin{tikzpicture}
 \node at ( 1,1)  [circle,draw=black]{};

 \node at ( 1.5,1) [circle,draw=black,fill=black!60]  {};\node at ( 2,1) [circle,draw=black,fill=black!60]  {};
 \end{tikzpicture}};\draw[<->] (1,0)--(2,0);\node at (3,0) 
  {\begin{tikzpicture}
  \node at ( 1,1) [circle,draw=black,fill=black!60] {};
 
  \node at ( 1.5,1) [circle,draw=black] {};\node at ( 2,1) [circle,draw=black,fill=black!60] {};
  \end{tikzpicture}};
  
  \node at (0,-.7) 
   {\begin{tikzpicture}
   \node at ( 1,1) [circle,draw=black]{};
  
   \node at ( 1.5,1)[circle,draw=black]  {};\node at ( 2,1) [circle,draw=black,fill=black!60]  {};
   \end{tikzpicture}};\draw[<->] (1,-.7)--(2,-.7);\node at (3,-.7) 
    {\begin{tikzpicture}
    \node at ( 1,1) [circle,draw=black] {};
   
    \node at ( 1.5,1) [circle,draw=black,fill=black!60]  {};\node at ( 2,1) [circle,draw=black]  {};
    \end{tikzpicture}};
 \end{tikzpicture}
 \end{center}
such that the number of adjacent white-black pairs in $ \xi $ does not change  under these operations.
\end{Lem}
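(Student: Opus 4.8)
The plan is to encode the line of balls as a lattice path and to reduce the statement to the invariance of one numerical quantity attached to that path. Reading the balls from left to right, assign the step $+1$ to each white ball and $-1$ to each black ball, and set $h_0=0$, $h_k=h_{k-1}+(\pm 1)$ according to the $k$-th ball, where $n$ is the total number of balls. Viewing white as an opening bracket and black as a closing bracket, an adjacent white-black pair is exactly a matched bracket pair, so the exhaustive removal process is bracket reduction; its order-independent output is the reduced word $B^{a}W^{b}$ (all blacks preceding all whites, since any remaining white-black adjacency would still be removable). Each removed pair destroys one black and one white ball, so the number $G$ of removable adjacent white-black pairs (the quantity denoted $G_r$ in the paragraph preceding the lemma) satisfies $G=p-a$, where $a$ is the number of unmatched black balls. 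By the standard bracket-matching count the unmatched number is $a=-\min_{0\le k\le n}h_k$, a nonnegative quantity because $h_0=0$. Hence
\[
G=p+\min_{0\le k\le n}h_k .
\]

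Granting this formula, it suffices to check that each of the two operations preserves both the number $p$ of black balls and the minimum $\min_k h_k$. Preservation of $p$ (and of $q$) is immediate, since each operation merely permutes the three balls inside its window. For the height minimum, observe that in either operation the three affected balls have the same step-multiset before and after: $\{+1,-1,-1\}$ for the first operation and $\{+1,+1,-1\}$ for the second. Consequently the running height agrees before the window and, since the net change across the window is the same, agrees again at the right end of the window and throughout the rest of the path. Thus, if the ball just before the window sits at position $i$, the only heights that can change are the two interior values $h_{i+1},h_{i+2}$, while the window endpoints $h_i$ and $h_{i+3}$ are common to both words.

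It then remains to verify that these two interior heights are never strictly below both endpoint values. Writing $H=h_i$: for the first operation the endpoints are $H$ and $H-1$, and the interior heights are $(H+1,H)$ for WBB or $(H-1,H)$ for BWB, each lying in $[H-1,\infty)$; for the second operation the endpoints are $H$ and $H+1$, and the interior heights are $(H+1,H+2)$ for WWB or $(H+1,H)$ for WBW, each lying in $[H,\infty)$. In every case the interior heights are bounded below by $\min(h_i,h_{i+3})$, hence by the minimum $M$ of the heights common to both words. Therefore the full minimum equals $\min(M,\text{interior})=M$ for each word, so $\min_k h_k$ is unchanged; combined with the invariance of $p$ this gives $G$ unchanged, which is the assertion. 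The one genuinely non-routine ingredient is the displayed formula relating the order-independent removal count to $\min_k h_k$; everything after it is the short case check above. One could instead argue confluence of the rewriting directly, but matching reductions across the window boundary is fiddlier, so I prefer to isolate the classical bracket-matching count and reduce to the two-line verification.
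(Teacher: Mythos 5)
Your proof is correct, and it takes a genuinely different route from the paper's. The paper's own proof is a one-liner riding on the order-independence fact stated just before the lemma: to compare the two sides of an operation, delete first the white-black pair lying inside the three-ball window; then WBB and BWB both collapse to the same sequence (one black ball plus the untouched remainder), and WWB and WBW both collapse to one white ball plus the remainder, so the total removal counts coincide. You instead build an explicit numerical invariant: reading white as $+1$ and black as $-1$, with prefix sums $h_k$, you identify the removal count as $G=p+\min_{0\le k\le n}h_k$ (where $p$ is the number of black balls), and then check in four cases that the two interior heights of the window never dip below $\min(h_i,h_{i+3})$, so the global minimum --- hence $G$ --- is unchanged. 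Both arguments ultimately lean on the same order-independence assertion (yours, to know that the reduced word and the count are well defined; the paper's, to reorder the deletions), so neither is more self-contained in that respect; also, your claim that the reduced word has the form $B^aW^b$ deserves the one-line remark that a word with no \emph{adjacent} white-black pair cannot contain a white ball anywhere to the left of a black ball (take a white-before-black occurrence of minimal gap). What your approach buys is the closed formula $G_r=p+\min_k h_k$, which is strictly more than the lemma asserts: it computes the invariant $m$ of Theorem \ref{pr:main4} directly from $\xi(\lambda)$ and could be used to shortcut the case analyses in the later applications (e.g., Proposition \ref{pro:uhw} and Theorem \ref{th:last}). The paper's approach buys brevity.
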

\begin{proof}
It   is an easy consequence of the fact that the number of adjacent white-black pairs is independent of the choice of deleting order. 
\end{proof}

If $ \lambda $ is as in  Example \ref{ex:alg}, the sequence $ \xi(\lambda) $ of balls is illustrated as follows:
\begin{center}
\begin{tikzpicture}

\node at ( 1.5,1) [circle,draw=black] {};\node at ( 2,1) [circle,draw=black] {};\node at ( 2.5,1) [circle,draw=black] {};

\node at ( 3,1) [circle,draw=black,fill=black!60] {};
\node at ( 3.5,1) [circle,draw=black,fill=black!60] {};

\node at ( 4,1) [circle,draw=black] {};

\node at ( 4.5,1) [circle,draw=black,fill=black!60] {};

\node at ( 5,1) [circle,draw=black] {};

\node at ( 5.5,1) [circle,draw=black,fill=black!60] {};

\node at ( 6,1) [circle,draw=black] {};

\draw (2.5,0.8)--(3,.8);

\draw (2,0.7)--(3.5,.7);

\draw (4,0.8)--(4.5,.8);

\draw (5,0.8)--(5.5,.8);
\end{tikzpicture}
\end{center}
 Then we can remove 4 adjacent white-black pairs. And the remaining balls are arranged as follows: \begin{center}
 \begin{tikzpicture}

 \node at ( 1.5,1) [circle,draw=black] {};\node at ( 2,1) [circle,draw=black] {};
 \end{tikzpicture}
 \end{center}
So $ m=G_r=4 $. Using certain operations in Lemma \ref{lem:transform}, $ \xi(\lambda) $  can transform into 
\begin{center}
\begin{tikzpicture}

\node at ( 1.5,1) [circle,draw=black] {};
\node at ( 2,1) [circle,draw=black,fill=black!60] {};
\node at ( 2.5,1) [circle,draw=black] {};

\node at ( 3,1) [circle,draw=black,fill=black!60] {};
\node at ( 3.5,1) [circle,draw=black]  {};

\node at ( 4,1) [circle,draw=black,fill=black!60] {};

\node at ( 4.5,1)[circle,draw=black] {};

\node at ( 5,1) [circle,draw=black,fill=black!60]  {};

\node at ( 5.5,1)[circle,draw=black] {};

\node at ( 6,1) [circle,draw=black] {};
\end{tikzpicture}
\end{center}

\subsection{Another model}
At last, we want to give an algebraic model to compute $ m $.

\begin{Prop}\label{pr:alg}
Let $ A $ be a $ \mathbb{Z}[v] $-algebra generated by $ x,y $ with only relation \(
xy=v\)
where $ v $ is an indeterminate. It is obvious that $ A $ has a standard $ \mathbb{Z}[v] $-basis $\{y^sx^t\mid s,t\in\mathbb{N} \}  $.

Let $ \lambda $ be an integral $ (p,q) $-dominant weight with $ \xi(\lambda)=(a_1,b_1,\cdots,a_r,b_r) $. Then we have\[
x^{a_1}y^{b_1}\cdots x^{a_r}y^{b_r}=v^m y^{p-m}x^{q-m}
\]where $ m $ is the number of entries in the second column of $ P(\lambda) $.
\end{Prop}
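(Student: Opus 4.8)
The plan is to read the algebra relation $xy=v$ as the combinatorial pair-removal operation of the previous subsection, and thereby transport Theorem \ref{pr:main4} directly into $A$. Concretely, I set up the dictionary in which the generator $x$ stands for a white ball and $y$ for a black ball. Under this dictionary the monomial $x^{a_1}y^{b_1}\cdots x^{a_r}y^{b_r}$ is exactly the line of balls $\xi(\lambda)=(a_1,b_1,\dots,a_r,b_r)$ read from left to right; it involves $\sum_i a_i=q$ copies of $x$ and $\sum_i b_i=p$ copies of $y$, and a single application of the relation $xy=v$ --- replacing an $x$ immediately followed by a $y$ by the central scalar $v$ --- is precisely the removal of one adjacent white-black pair.

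First I would record the elementary normal-form fact in $A$: any product of the generators $x,y$ can be rewritten, using $xy=v$ repeatedly, into the form $v^{c}\,y^{s}x^{t}$ for some $c,s,t\in\mathbb{N}$. Indeed, since $v$ lies in the central base ring $\mathbb{Z}[v]$, each rewriting step strictly shortens the word in $x,y$ while emitting one factor of $v$; the process stops exactly when the word contains no factor $xy$, and a binary word in $x,y$ avoids the factor $xy$ if and only if it has the form $y^{s}x^{t}$. Because $A$ possesses the standard basis $\{y^{s}x^{t}\}$, the resulting element $v^{c}y^{s}x^{t}$ is independent of the chosen reduction order; in particular the exponent $c$ equals the number of rewriting steps used in any reduction to normal form.

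Next I would count. Each rewriting step deletes one $x$ and one $y$, so starting from $q$ copies of $x$ and $p$ copies of $y$ a reduction of length $c$ terminates at $y^{p-c}x^{q-c}$, forcing $s=p-c$ and $t=q-c$. On the ball side the same reduction sequence is a sequence of adjacent white-black pair removals, whose number is the order-independent quantity $G_r$ of Theorem \ref{pr:main4}; hence $c=G_r$, and Theorem \ref{pr:main4}(i) identifies $G_r$ with $m$. Substituting $c=m$ gives $x^{a_1}y^{b_1}\cdots x^{a_r}y^{b_r}=v^{m}y^{p-m}x^{q-m}$, as claimed. Alternatively, one can bypass the dictionary and argue by induction on $r$ using the single computation $x^{s}y^{t}=v^{\min(s,t)}y^{t-\min(s,t)}x^{s-\min(s,t)}$ together with the recursion for $G_k$ in Theorem \ref{pr:main4}(ii); the inductive step matches the update $G_{k+1}=G_k+\min\{\sum_{i\le k+1}a_i-G_k,\,b_{k+1}\}$ exactly.

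The only genuinely delicate point is the compatibility of the two order-independences: that the power of $v$ produced in $A$ is the same integer as the number of pairs stripped off in the ball model. Both facts are already available --- the uniqueness of the basis $\{y^{s}x^{t}\}$ pins down $c$ in $A$, while the remark preceding Theorem \ref{pr:main4} pins down $G_r$ in the ball model --- and the dictionary makes a single reduction step on one side literally a single removal on the other, so the two counts coincide step for step. Everything else is the bookkeeping of the two conserved quantities (the $x$-count and the $y$-count each drop by one per step), which is routine.
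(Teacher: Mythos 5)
Your proof is correct and takes essentially the same route as the paper, whose entire argument is the one-line dictionary you set up: viewing white balls as $x$ and black balls as $y$, so that one application of $xy=v$ is exactly one removal of an adjacent white-black pair, and then invoking Theorem \ref{pr:main4}. The extra details you supply (the normal form $v^{c}y^{s}x^{t}$ in $A$, order-independence of $c$ via the basis $\{y^{s}x^{t}\}$, and the bookkeeping $s=p-c$, $t=q-c$) simply make explicit what the paper leaves implicit.
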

\begin{proof}
This  directly follows from Theorem \ref{pr:main4} by viewing white (resp. black) balls as $ x $ (resp. $ y $).
  \end{proof}

Applying this proposition to Example \ref{ex:alg}, we have $ x^3y^2xyxyx=v^my^{4-m}x^{6-m} $, which implies that $ m=4 $.

\section{Application to $ (\mathfrak{g},K) $-modules}

\subsection{Application to  unitary highest weight $ \su(p,q) $-modules}

The  unitary highest weight $ (\mathfrak{g},K) $-modules had been classified by   \cite{EHW} and   \cite{Jac}. Let $L(\lambda) $ be a unitary highest weight $(\mathfrak{g}, K)$-module. We use  notations from   \cite{EHW}. Let $\beta$ be the unique maximal root in the positive roots $\Phi^+$ and $\zeta$ be the unique weight orthogonal to $ \Phi_c $  and satisfying $(\zeta, \beta^{\vee})=1$. Then  we can write $\lambda=\tilde{\lambda}+z \zeta $, where $\tilde{\lambda}$ is a $\mathfrak{k}$-dominant weight of $ \mathfrak{h}^{*} $ such that ($\tilde{\lambda}  + \rho, \beta  $)=0, and $z\in \mathbb{R}$. A formula for the Gelfand-Kirillov dimensions of all unitary highest weight  $ (\mathfrak{g},K) $-modules is  found in \cite{Bai-Hu}.  Now by our algorithm, we can reprove it in the case $G=\su(p,q)$.

Assume that  $ \tilde{\lambda} $ is a $ (p,q) $-dominant weight such that $ \tilde{\lambda}_1 =\tilde{\lambda}_{p+q}$ where $ \tilde{\lambda}+\rho=(\tilde{\lambda}_1,\cdots,\tilde{\lambda}_n) $. Without loss of generality, we can assume that all $ \tilde{\lambda}_i\in\mathbb{Z} $. We have $ \zeta=({\underbrace{1,1\cdots 1}_{p},\underbrace{0\cdots,0}_{q}}) $. Let $ p' $ be the maximal integer in $ [1,p] $ such that $ \tilde{\lambda}_1 $, $ \tilde{\lambda}_2 $, $ \cdots $, $ \tilde{\lambda}_{p'} $ are consecutive integers.
And let $ q' $ be the maximal integer in $ [1,q] $ such that $ \tilde{\lambda}_{p+q-q'+1} $,  $ \cdots $, $ \tilde{\lambda}_{p+q} $ are consecutive integers.

From \cite[Thm. 7.4]{EHW}, we have the following lemma.
\begin{Lem}Let $ z\in\mathbb{R} $. The simple highest weight module
	$ L(\tilde{\lambda}+z\zeta) $ is a unitary $ \su(p,q) $-module if and only if $ z\in I_{\tilde{\lambda}} $, where \[
	I_{\tilde{\lambda}}=\{z\in\mathbb{R}\mid z\leq \max{\{p',q'\}} \}\cup\{z\in\mathbb{Z}\mid z\leq p'+q'-1 \}.
	\]
\end{Lem}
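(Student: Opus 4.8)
The plan is to obtain the lemma by specializing the Enright--Howe--Wallach classification \cite[Thm.~7.4]{EHW} to $G=\su(p,q)$ and expressing the resulting constants through the run-lengths $p'$ and $q'$. Throughout I would keep in mind that
\[
\tilde{\lambda}+z\zeta+\rho=(\tilde{\lambda}_1+z,\dots,\tilde{\lambda}_p+z,\tilde{\lambda}_{p+1},\dots,\tilde{\lambda}_{n}),
\]
so that, since $\tilde{\lambda}$ is $(p,q)$-dominant, $L(\tilde{\lambda}+z\zeta)$ is a highest weight $(\mf{g},K)$-module for every $z\in\mathbb{R}$; the only issue is whether its canonical contravariant Hermitian form is positive semidefinite.

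Next I would recall the shape of the EHW answer. For a fixed $\tilde{\lambda}$ with $\bil{\tilde{\lambda}+\rho}{\beta}=0$, their theorem describes the unitarizable locus on the line $z\mapsto\tilde{\lambda}+z\zeta$ as a half-line together with a finite arithmetic progression of isolated points: there are constants $A,B,C$ with $C>0$ such that $L(\tilde{\lambda}+z\zeta)$ is unitary if and only if
\[
z\le A\qquad\text{or}\qquad z\in\{A+C,A+2C,\dots\}\ \text{with}\ z\le B.
\]
Here $A$ is the last point of the continuous part and $B$ the last discrete unitary point. Matching this with $I_{\tilde{\lambda}}$ then reduces to the three equalities $C=1$, $A=\max\{p',q'\}$ and $B=p'+q'-1$: once these hold, the set $\{z\le A\}\cup\{z\in\mathbb{Z}: z\le B\}$ is exactly $\{z\le\max\{p',q'\}\}\cup\{z\in\mathbb{Z}:z\le p'+q'-1\}$, the integers $\le A$ being absorbed into the half-line.

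The computation of these constants is the heart of the matter. The noncompact positive roots are $\gamma=\epsilon_i-\epsilon_j$ with $1\le i\le p<j\le n$, and along the line one has $\bil{\tilde{\lambda}+z\zeta+\rho}{\gamma}=z+\tilde{\lambda}_i-\tilde{\lambda}_j$. Since all roots of $A_{n-1}$ have equal length and $\tilde{\lambda}$ is integral, every reduction point occurs at an integer $z$, which yields $C=1$. The identifications $A=\max\{p',q'\}$ and $B=p'+q'-1$ come from feeding EHW's norm data into this coordinate description: recalling $\tilde{\lambda}_1=\tilde{\lambda}_n$, the numbers $p'$ and $q'$ are precisely the lengths of the maximal strings of consecutive integers among the $\tilde{\lambda}_i$ at the top of the first block and the bottom of the second block, and these strings control the orders of vanishing of the successive factors of the norm polynomial as $z$ increases. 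I would evaluate EHW's constants for type $A_{n-1}$ (equivalently, read them from their explicit classification) and rewrite them in these terms, finally checking the boundary cases $p'=p$, $q'=q$, and $q'=1$ (in the last of which $A=B$ and the discrete part is empty).

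The main obstacle is exactly this last translation. A naive guess fails: the first reducibility point of the Verma module is $z=1$, attained by $\beta=\epsilon_1-\epsilon_n$ since $\tilde{\lambda}_1=\tilde{\lambda}_n$, and this is in general strictly smaller than $A=\max\{p',q'\}$. Thus continuous unitarity genuinely persists past the first reducibility, so $A$ cannot be read off from Verma reducibility alone and must come from EHW's positivity analysis. Care is likewise needed with sign and normalization conventions (the direction of increasing $z$, the shift by $\rho=(-1,-2,\dots,-n)$, and EHW's normalization of $\zeta$). Once the bookkeeping matching $(A,B,C)$ to $(\max\{p',q'\},\,p'+q'-1,\,1)$ is carried out, the description of $I_{\tilde{\lambda}}$ follows.
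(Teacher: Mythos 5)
Your proposal is correct and is essentially the paper's proof: the paper establishes this lemma purely by citing Theorem 7.4 of \cite{EHW} (the explicit Enright--Howe--Wallach classification for $\su(p,q)$), which is also the core of your argument, with the identification of EHW's constants as $A=\max\{p',q'\}$, $B=p'+q'-1$, $C=1$ being the same routine translation the paper performs implicitly. Your extra cautionary remarks (sign/normalization conventions, and that continuous unitarity persists past the first Verma reducibility at $z=1$) are sound but go beyond anything the paper itself spells out.
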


Then we have the following result.

\begin{Prop}\label{pro:uhw}
	Keep the notations as above. When $ z\in I_{\tilde{\lambda}} $, the \gk of $ L(\tilde{\lambda}+z\zeta) $ only depends on $ z $. Precisely,\begin{itemize}
		\item [(i)]If $ z\in I_{\tilde{\lambda}}\setminus\mathbb{Z} $, we have $ \gkd L(\tilde{\lambda}+z\zeta)=pq $.
		\item [(ii)] If $ z\in I_{\tilde{\lambda}}\cap\mathbb{Z} $, we have
		\[
		\gkd L(\tilde{\lambda}+z\zeta)=\begin{cases}
		pq,&\text{if } z<\max\{p,q\} ;\\
		(z+1)(n-z-1), & \text{if } \max\{p,q \}\leq z \leq p'+q'-1 .
		\end{cases}
		\]
	\end{itemize}
\end{Prop}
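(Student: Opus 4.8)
The plan is to pass to the combinatorial model of Theorem~\ref{pr:main4} and compute the second-column length $m$ of $P(\lambda)$ for $\lambda=\tilde{\lambda}+z\zeta$. Writing $\tilde{\lambda}+\rho=(\tilde{\lambda}_1,\dots,\tilde{\lambda}_n)$ and using $\zeta=(\underbrace{1,\dots,1}_{p},\underbrace{0,\dots,0}_{q})$, we have $\lambda+\rho=(\tilde{\lambda}_1+z,\dots,\tilde{\lambda}_p+z,\tilde{\lambda}_{p+1},\dots,\tilde{\lambda}_n)$. Adding $z$ to the first $p$ coordinates preserves the strict decrease inside the first block and inside the last block, so $\lambda$ is $(p,q)$-dominant for every $z$ and the results of the previous sections apply. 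For part~(i), if $z\notin\mathbb{Z}$ then $\lambda_1-\lambda_{p+1}=(\tilde{\lambda}_1-\tilde{\lambda}_{p+1})+z\notin\mathbb{Z}$ because $\tilde{\lambda}_1-\tilde{\lambda}_{p+1}\in\mathbb{Z}$; hence $\lambda$ is a non-integral $(p,q)$-dominant weight and Theorem~\ref{pr:pqw}(ii) gives $\gkd L(\lambda)=pq$ at once.

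For part~(ii), $z\in\mathbb{Z}$ makes $\lambda$ integral, so by Theorem~\ref{pr:pqw}(i) it suffices to compute $m$, after which $\gkd L(\lambda)=m(n-m)$. In the ball model the first $p$ coordinates are black balls with values $\tilde{\lambda}_i+z$ and the last $q$ coordinates are white balls with values $\tilde{\lambda}_{p+j}$; identifying a white ball with an opening bracket and a black ball with a closing bracket, the number $m$ of removable adjacent white--black pairs is the number of matched brackets. Since within each value-class whites precede blacks, the running excess of closing over opening brackets peaks at the class boundaries, so the standard description of bracket matching yields
\[
p-m=\max_{s}\Bigl(\#\{i:\tilde{\lambda}_i+z>s\}-\#\{j:\tilde{\lambda}_{p+j}>s\}\Bigr),
\]
the maximal excess of black balls over white balls lying above a threshold $s$.

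I would then evaluate this maximum using the hypotheses on $\tilde{\lambda}$. Setting $c:=\tilde{\lambda}_1=\tilde{\lambda}_{p+q}$, $(p,q)$-dominance forces $\tilde{\lambda}_i\le c$ for $i\le p$ and $\tilde{\lambda}_{p+j}\ge c$ for $j\le q$; hence above an integer threshold $s$ the relevant black base-values lie in $(s-z,c]$ and the white values at most $s$ lie in $[c,s]$, and counting integer slots in these two almost-disjoint ranges gives the uniform upper bound $p-m\le\max(p-q,\,z+1-q,\,0)$, valid for every $z$ and requiring nothing beyond the separation at $c$. The reverse inequality is where $p'$ and $q'$ enter: the consecutive runs $c,c-1,\dots,c-p'+1$ among the shifted blacks and $c,c+1,\dots,c+q'-1$ among the whites let the threshold $s=c+z-k$ realize the value $z+1-q$ exactly when there is an index $k$ with $1\le k\le p'$ and $z-q'+1\le k\le z$, i.e. exactly when $z\le p'+q'-1$; the values $p-q$ and $0$ are always realized as $s\to-\infty$ and $s\to+\infty$. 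Thus for every integer $z\le p'+q'-1$ we obtain $p-m=\max(p-q,\,z+1-q,\,0)$, equivalently $m=\min\bigl(\min(p,q),\,n-1-z\bigr)$, which in particular depends only on $z$ (and $p,q$), as the statement asserts.

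Comparing the three terms of the maximum finishes the proof: the term $z+1-q$ fails to dominate precisely when $z<\max(p,q)$, giving $m=\min(p,q)$ and $\gkd L(\lambda)=pq$; and for $\max(p,q)\le z\le p'+q'-1$ it dominates, giving $m=n-1-z$ and $\gkd L(\lambda)=(z+1)(n-z-1)$. The main obstacle is the evaluation of the maximum above. The upper bound is a short slot-counting argument, while the genuinely delicate point is pinning down exactly when the value $z+1-q$ is attainable, since this is what couples the formula to the unitarity range through $p'$ and $q'$. If the direct threshold computation turns out to be awkward to write cleanly, the same count can instead be organized through the recursion of Theorem~\ref{pr:main4}(ii) or the monoid identity of Proposition~\ref{pr:alg}, applied to the explicit sequence $\xi(\tilde{\lambda}+z\zeta)$ produced by Proposition~\ref{pr:main3}.
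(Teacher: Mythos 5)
Your proposal is correct, and for part (ii) it takes a genuinely different route from the paper's. Part (i) is identical in both: since $\tilde{\lambda}_1-\tilde{\lambda}_{p+1}\in\mathbb{Z}$, a non-integral $z$ makes $\lambda_1-\lambda_{p+1}\notin\mathbb{Z}$, and Theorem \ref{pr:pqw}(ii) applies. For part (ii), the paper does not compute $m$ for $\tilde{\lambda}+z\zeta$ directly: it first replaces $\tilde{\lambda}$ by the model weight $\tilde{\mu}+\rho=(p,p-1,\dots,1,p+q-1,\dots,p)$, invoking the local ball moves of Lemma \ref{lem:transform} to assert that $P(\tilde{\lambda}+z\zeta)$ and $P(\tilde{\mu}+z\zeta)$ have the same shape for $z\in I_{\tilde{\lambda}}\cap\mathbb{Z}$, and then evaluates $m$ for $\tilde{\mu}+z\zeta$ via Theorem \ref{pr:main4}. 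You instead stay with $\tilde{\lambda}+z\zeta$ itself, recast the ball model as bracket matching, and derive the closed prefix-maximum formula $p-m=\max_s\bigl(\#\{\text{blacks}>s\}-\#\{\text{whites}>s\}\bigr)$, which you then pin down by a slot-counting upper bound (using only that the whites are distinct integers $\geq c$ and the shifted blacks distinct integers $\leq c+z$) together with an attainability argument at thresholds $s=c+z-k$; this lower bound is exactly where $p'$, $q'$ and the unitarity bound $z\leq p'+q'-1$ enter, and it yields $m=\min\bigl(\min(p,q),\,n-1-z\bigr)$, which recovers both cases of the statement. Both arguments rest on the same combinatorial model (Theorem \ref{pr:main4}), but yours makes explicit the mechanism that the paper compresses into ``one can check'': the shape depends on nothing but the two consecutive runs, precisely because the extremal threshold can be realized inside those runs when $z\leq p'+q'-1$. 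The paper's normalization buys brevity; your computation buys a reusable closed formula with transparent proofs of both inequalities, and it makes the assertion that the dimension depends only on $z$ immediate rather than a by-product of the reduction. Two cosmetic slips, neither a gap: the consecutive run among the shifted blacks is $c+z,c+z-1,\dots,c+z-p'+1$ (you wrote the unshifted values), and the realizability of the middle term $z+1-q$ requires $z\geq 1$; for $z\leq 0$ that term is $\leq 0$ and is dominated by the always-realized value $0$, so the final formula is unaffected.
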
\begin{proof}
(i) follows from Theorem \ref{pr:pqw}(ii).

(ii). Let $ \tilde{\mu}+\rho =(p,p-1,\cdots, 1,p+q-1,p+q-2,\cdots, p)$. By using operations in Lemma \ref{lem:transform} one can check that $ P(\tilde{\lambda}+z\zeta)$ and $P(\tilde{\mu}+z\zeta)$ have the same shape   when $ z\in I_{\tilde{\lambda}}\cap\mathbb{Z} $. Then by Theorem \ref{thm:main2}, we have $$  \gkd (\tilde{\lambda}+z\zeta) =\gkd(\tilde{\mu}+z\zeta) \text{ for all } z\in I_{\tilde{\lambda}}\cap\mathbb{Z}. $$ Then (ii) is reduced to the computation of  $ \gkd(\tilde{\mu}+z\zeta) $, which explains why   $\gkd L(\tilde{\lambda}+z\zeta) $ only depends on $ z $ if $ z\in I_{\tilde{\lambda}} \cap\mathbb{Z}$.
Now we compute
$ \gkd L(\tilde{\mu}+z\zeta) $ for all $ z\in\mathbb{Z}  $. Using Theorem \ref{pr:main4}, one can check that the number $ m $ of entries in  the second column of $ P(\tilde{\mu}+z\zeta) $ with $ z\in\mathbb{Z} $ is
\[
m =\begin{cases}
\min\{{p,q}\},&\text{ if }z<\max\{p,q\};\\
n-1-z&\text{ if }\max\{p,q\}\leq z\leq n-1;\\
0&\text{ if } z\geq n.

\end{cases}
\]
Hence by Theorem \ref{pr:pqw}, we have \[
\gkd L(\tilde{\mu}+z\zeta)=\begin{cases}
pq,&\text{ if }z<\max\{p,q\};\\
(z+1)(n-1-z)&\text{ if}\max\{p,q\}\leq z\leq n-1;\\
0&\text{ if } z\geq n.

\end{cases}
\]
which completes the proof.
  \end{proof}


\subsection{Application to  highest weight $ \su(p,q) $-modules}
It is proved 
in \cite{Bai-Hu} that $\gkd  L(\tilde{\lambda}+z\zeta)$  decreases as $z$ takes unitary points and  increases.
Now we  generalize  this result to all highest weight $(\mf{g},K)$-modules in the case $ G=\su(p,q) $. 

\begin{Thm}\label{th:last}
Let $ L(\tilde{\lambda}+z\zeta)$ be a highest weight $\su(p,q)$-module. 
Then $$ GKdim(L(\tilde{\lambda}+z\zeta)) $$ will (weakly) decrease   as $z\in \mathbb{Z}$ increases. Moreover, $GKdim(L(\tilde{\lambda}+z\zeta))=0$ if $ z $ is an integer bigger than  $ \tilde{\lambda}_{p+1}-\tilde{\lambda}_p$.
\end{Thm}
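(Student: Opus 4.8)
The plan is to reduce everything to the monotonicity of the single integer $m=m(z)$, the number of boxes in the second column of $P(\tilde{\lambda}+z\zeta)$. First I would note that for every $z\in\mathbb{Z}$ the weight $\tilde{\lambda}+z\zeta$ is again integral and $(p,q)$-dominant, since $(\tilde{\lambda}+z\zeta)+\rho=\tilde{\lambda}+\rho+z\zeta$ and $\zeta=(\,\underbrace{1,\dots,1}_{p},\underbrace{0,\dots,0}_{q}\,)$ merely shifts the first block of entries by $z$ while preserving the strict decrease within each block. Hence Theorem~\ref{pr:pqw}(i) applies and gives $\gkd L(\tilde{\lambda}+z\zeta)=m(z)\,(n-m(z))$. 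Since $m(z)\le\min\{p,q\}\le n/2$ and the function $x\mapsto x(n-x)$ is weakly increasing on $[0,n/2]$, the assertion ``$\gkd$ weakly decreases in $z$'' is equivalent to ``$m(z)$ weakly decreases in $z$'', and $\gkd L(\tilde{\lambda}+z\zeta)=0$ is equivalent to $m(z)=0$. Thus it suffices to prove that $m(z)$ is weakly decreasing and that $m(z)=0$ once $z>\tilde{\lambda}_{p+1}-\tilde{\lambda}_p$.

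Next I would rewrite $m(z)$ through the combinatorial model of Theorem~\ref{pr:main4}. For $\lambda=\tilde{\lambda}+z\zeta$ the black balls carry the values $\tilde{\lambda}_i+z$ $(1\le i\le p)$ and the white balls carry $\tilde{\lambda}_{p+j}$ $(1\le j\le q)$, placed in weakly decreasing order with a white ball to the left of a black ball of equal value. Reading white as an opening and black as a closing bracket, the deletion of adjacent white-black pairs is exactly bracket matching, so $m(z)$ is the number of matched pairs. Writing $B_i$ (resp. $W_i$) for the number of black (resp. white) balls among the first $i$ balls of the line, the number of unmatched black balls is $u(z)=\max_{0\le i\le n}(B_i-W_i)$, and therefore $m(z)=p-u(z)$.

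The key step is to express $u(z)$ in an order-free, manifestly $z$-monotone way. By inspecting the prefixes inside each block of equal values (adding the equal-valued whites first decreases $B_i-W_i$, then adding the equal-valued blacks increases it, so the prefix maximum is attained at a cut between two distinct values), one obtains, with $N_b(t)=\#\{\,i\in[1,p]\mid \tilde{\lambda}_i+z\ge t\,\}$ and $N_w(t)=\#\{\,j\in[1,q]\mid \tilde{\lambda}_{p+j}\ge t\,\}$,
\[
u(z)=\max_{t\in\mathbb{R}}\big(N_b(t)-N_w(t)\big).
\]
For each fixed $t$ one has $N_b(t)=\#\{\,i\mid \tilde{\lambda}_i\ge t-z\,\}$, which is weakly increasing in $z$, while $N_w(t)$ is independent of $z$; hence every term of the maximum, and so $u(z)$ itself, is weakly increasing in $z$. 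Consequently $m(z)=p-u(z)$ is weakly decreasing, which is the first assertion. (An alternative avoiding the threshold formula is to observe that increasing $z$ by $1$ only moves black balls leftward past white balls, i.e. performs a sequence of elementary $WB\to BW$ adjacent swaps, and that each such swap raises exactly one prefix difference $B_i-W_i$ by $2$ while leaving the others fixed, hence cannot decrease $u(z)$.)

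For the vanishing, note that $N_b(t)-N_w(t)\le p$ always, and $m(z)=0$ iff $u(z)=p$, i.e. iff there is a threshold $t$ with $N_b(t)=p$ and $N_w(t)=0$. Such a $t$ exists precisely when the smallest black value exceeds the largest white value, that is when $\tilde{\lambda}_p+z>\tilde{\lambda}_{p+1}$, i.e. $z>\tilde{\lambda}_{p+1}-\tilde{\lambda}_p$; then all black balls precede all white balls, no white-black pair survives, and so $m(z)=0$, giving $\gkd L(\tilde{\lambda}+z\zeta)=0$. I expect the main obstacle to be the rigorous justification of the prefix-maximum description of $u(z)$ (equivalently, the claim that the $z\mapsto z+1$ transition decomposes into $WB\to BW$ swaps, together with the monotonicity of $u$ under such a swap); once this order-free formula for $m(z)$ is established, both the monotonicity and the vanishing follow at once.
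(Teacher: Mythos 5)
Your proposal is correct and, at the top level, takes the same route as the paper: both arguments reduce the theorem, via Theorem \ref{pr:pqw}(i) and the bound $m\le\min\{p,q\}\le n/2$, to the claim that the number $m(z)$ of boxes in the second column of $P(\tilde{\lambda}+z\zeta)$ weakly decreases in $z\in\mathbb{Z}$, and both read $m(z)$ off the ball model of Theorem \ref{pr:main4}. The difference lies in what actually gets proved. The paper's proof consists of the one-sentence assertion that the combinatorial model makes $m(z+1)\le m(z)$ evident, and it does not address the vanishing clause at all; you prove both claims. Your key addition is the order-free description $m(z)=p-u(z)$, where $u(z)=\max_{0\le i\le n}(B_i-W_i)=\max_{t\in\mathbb{R}}\bigl(N_b(t)-N_w(t)\bigr)$ counts unmatched black balls: monotonicity of $m$ is then immediate because $N_b(t)$ weakly increases in $z$ while $N_w(t)$ is constant, and the vanishing clause becomes the observation that $u(z)=p$ precisely when the smallest black value $\tilde{\lambda}_p+z$ exceeds the largest white value $\tilde{\lambda}_{p+1}$, i.e. $z>\tilde{\lambda}_{p+1}-\tilde{\lambda}_p$ (so you in fact obtain an equivalence where the theorem claims only an implication). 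This threshold lemma is exactly the justification the paper leaves to the reader. Your only implicit debts are the tie-breaking rule (equal-valued whites placed to the left of blacks), which is what guarantees that each set $\{\text{value}\ge t\}$ is a prefix of the line of balls and hence that the prefix maximum is computed at such cuts, and the integrality of $\tilde{\lambda}$ (forced by $(p,q)$-dominance together with $\bil{\tilde{\lambda}+\rho}{\beta}=0$), which is what lets Theorem \ref{pr:pqw}(i) apply for every $z\in\mathbb{Z}$.
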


\begin{proof}   By the combinatorial model in Theorem \ref{pr:main4}, the value of $m$ for $P(\tilde{\lambda}+(z+1)\zeta)$ is less than or equal to the value of $m$ for $P(\tilde{\lambda}+z\zeta)$. So  $m$  decreases as the integer $z$ increases.
	Note that $m\leq \min(p,q)\leq [\frac{n}{2}]$.  Then  Theorem \ref{pr:pqw} implies that  $\gkd  L(\tilde{\lambda}+z\zeta)$ decreases as $ m $ decreases.  So  $\gkd  L(\tilde{\lambda}+z\zeta)$ decreases as  the integer $z$ increases.
  \end{proof}

Now we turn to consider associated varieties of highest weight $ (\mf{g},K) $-modules.
We have the decomposition $\mathfrak{g}=\mathfrak{p}^-+\mathfrak{k}+\mathfrak{p}^+$ under the adjoint action of $K_{\mathbb{C}}$.   From Vogan \cite{Vogan-91}, we know that the associated variety of any highest weight $(\mathfrak{g}, K)$-module $L(\lambda)$ is the closure of some $K_{\mathbb{C}}$ orbit in $\mathfrak{p}^+\cong (\mathfrak{g}/(\mathfrak{k}+\mathfrak{p}^+))^*$, and closures of these $K_{\mathbb{C}}$ orbits form a linear chain of varieties:
\begin{equation}
\{0\}={\bar{\mc{O}}}_0\subset \bar{\mathcal{O}}_1\subset ...\subset\bar{\mathcal{O}}_{r-1}\subset \bar{\mathcal{O}}_r=\mathfrak{p}^+,
\end{equation}
where $r$ is the rank of the Hermitian symmetric space $G/K$. In \cite{Bai-Hu}, it is proved that 
$$\dim\bar{\mathcal{O}}_{k}=k\bil{\rho}{\beta}-k(k-1)C,$$
where $C$ is a constant only depending on $G$.

In the case $ G=\su(p,q) $, we have  $ \bil{\rho}{\beta} =n-1$ and $  C=1$, and hence $  \dim\bar{\mathcal{O}}_{k} =k(n-k) .$
Since $\gkd L(\lambda)=m(n-m) $  where  $ m $ is the number of entries in the second column of $ P(\lambda) $, then the associated variety of $ L(\lambda) $ must be $ \bar{\mathcal{O}}_{m} $. To summarize, we have the following theorem about Gelfand-Kirillov  dimensions and associated varieties of a highest weight $ (\mf{g},K) $-modules in the case $ G=\su(p,q) $.

\begin{Thm}\label{th:last2} Let $ G=\su(p,q) $. Let $ L(\lambda)$ be a highest weight $(\mf{g},K)$-module. 
\begin{itemize}
\item [(i)] If  $\lambda$ is an integral weight, then \begin{itemize}
\item[(a)] $ P(\lambda) $ is a Young tableau with at most two columns;\item [(b)] $ \gkd L(\lambda)=m(n-m) $ where $ m $ is the number of entries in the second column of $ P(\lambda) $; \item[(c)] The associated variety of $ L(\lambda)$  is $\bar{\mathcal{O}}_m$.
\end{itemize}
\item [(ii)] If  $\lambda$ is not  integral, then \begin{itemize}
\item[(a)] $ P(\lambda) $ consists of two Young tableaux with single column;\item[(b)]  $ \gkd L(\lambda)=pq $;\item[(c)] The associated variety of $ L(\lambda)$  is $\bar{\mathcal{O}}_r$, $ r=\min\{p,q\} $.
\end{itemize}
\end{itemize}
\end{Thm}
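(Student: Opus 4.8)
The statements (i)(a),(b) and (ii)(a),(b) are already contained in Theorem~\ref{pr:pqw}, so the plan is to concentrate the work on the associated-variety assertions (i)(c) and (ii)(c). First I would recall from Vogan~\cite{Vogan-91} that the associated variety of the highest weight $(\mf{g},K)$-module $L(\lambda)$ is the closure $\bar{\mathcal{O}}_k$ of a single $K_{\mathbb{C}}$-orbit appearing in the linear chain~\eqref{eq:chain}, for some index $k$ with $0\le k\le r=\min\{p,q\}$. The entire problem then reduces to pinning down the integer $k$.

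The decisive input is that the Gelfand--Kirillov dimension of $L(\lambda)$ equals the dimension of its associated variety in $\mathfrak{p}^+$ (since the highest weight vector is annihilated by $\mathfrak{p}^+\subset\mathfrak{n}^+$, so $L(\lambda)$ is a finitely generated $S(\mathfrak{p}^-)$-module and its support is exactly this variety), together with the dimension formula $\dim\bar{\mathcal{O}}_k=k(n-k)$ recorded just before the theorem (specialising $\bil{\rho}{\beta}=n-1$ and $C=1$ in the formula of \cite{Bai-Hu}). Combining these two facts yields the single numerical equation $k(n-k)=\gkd L(\lambda)$. In case (i), Theorem~\ref{pr:pqw} gives $\gkd L(\lambda)=m(n-m)$, hence $k(n-k)=m(n-m)$; in case (ii) it gives $\gkd L(\lambda)=pq=r(n-r)$, hence $k(n-k)=r(n-r)$.

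It remains to solve these equations for $k$, and here lies the only genuine subtlety. The map $j\mapsto j(n-j)$ is two-to-one on $\{0,1,\dots,n\}$, since $j(n-j)=(n-j)(n-(n-j))$, so a priori the equation $k(n-k)=m(n-m)$ admits both $k=m$ and $k=n-m$ as solutions. To discard the spurious one I would observe that both candidate quantities lie in the range $[0,\lfloor n/2\rfloor]$: the orbit index satisfies $k\le r=\min\{p,q\}\le\lfloor n/2\rfloor$ by the structure of the chain~\eqref{eq:chain}, while the column length satisfies $m\le\min\{p,q\}\le\lfloor n/2\rfloor$ by the bound $m\le\min(p,q)$ established in the discussion following Proposition~\ref{pr:main3}. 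On this range the function $j\mapsto j(n-j)$ is strictly increasing, hence injective, and therefore $k=m$ in case (i). The same monotonicity argument, applied with $r$ in place of $m$, forces $k=r$ in case (ii), which finishes the proof.

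The main obstacle is thus conceptual rather than computational: there is no hard estimate to push through, but one must be careful to identify $\gkd L(\lambda)$ with the dimension of the $K_{\mathbb{C}}$-orbit closure in $\mathfrak{p}^+$ and then confine both the orbit index and the tableau datum to the strictly increasing branch of $j\mapsto j(n-j)$, so that the dimension equality upgrades to an equality of the geometric invariants themselves.
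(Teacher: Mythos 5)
Your proposal is correct and follows essentially the same route as the paper: the paper likewise invokes Vogan's result that the associated variety is some $\bar{\mathcal{O}}_k$ in the chain \eqref{eq:chain}, specializes the dimension formula to $\dim\bar{\mathcal{O}}_k=k(n-k)$, and matches this against $\gkd L(\lambda)=m(n-m)$ from Theorem \ref{pr:pqw} to conclude $k=m$ (resp.\ $k=r$). Your explicit justification that $\gkd$ equals the dimension of the associated variety, and your observation that $j\mapsto j(n-j)$ is injective on $[0,\lfloor n/2\rfloor]$ so the dimension equation has a unique admissible solution, merely spell out points the paper leaves implicit.
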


\subsection*{Acknowledgments}
The first author is supported by NSFC Grant No.11601394 and China Postdoctoral Science Foundation Grant No.2016M602337. The second author is   supported by NSFC Grant No.11631001 and NSFC Grant No.11601116. Both authors thank the anonymous
referees for valuable comments and suggestions.


\begin{thebibliography}{10}
	
	\bibitem{Ariki2000}
	S.~Ariki.
	\newblock Robinson-{S}chensted correspondence and left cells.
	\newblock In {\em Combinatorial methods in representation theory ({K}yoto,
		1998)}, volume~28 of {\em Adv. Stud. Pure Math.}, pages 1--20. Kinokuniya,
	Tokyo, 2000.
	
	\bibitem{Bai-Hu}
	Z.~Q. Bai and M.~Hunziker.
	\newblock The {G}elfand-{K}irillov dimension of a unitary highest weight
	module.
	\newblock {\em Sci. China Math.}, 58(12):2489--2498, 2015.
	
	\bibitem{Bo-Kr}
	W.~Borho and H.~Kraft.
	\newblock \"{U}ber die {G}elfand-{K}irillov-{D}imension.
	\newblock {\em Math. Ann.}, 220(1):1--24, 1976.
	
	\bibitem{EHW}
	T.~Enright, R.~Howe, and N.~Wallach.
	\newblock A classification of unitary highest weight modules.
	\newblock In {\em Representation theory of reductive groups ({P}ark {C}ity,
		{U}tah, 1982)}, volume~40 of {\em Progr. Math.}, pages 97--143. Birkh\"auser
	Boston, Boston, MA, 1983.
	
	\bibitem{Garfinkle1993}
	D.~Garfinkle.
	\newblock The annihilators of irreducible {H}arish-{C}handra modules for {${\rm
			SU}(p,q)$} and other type {$A_{n-1}$} groups.
	\newblock {\em Amer. J. Math.}, 115(2):305--369, 1993.
	
	\bibitem{Ge-Ki}
	I.~M. Gelfand and A.~A. Kirillov.
	\newblock Sur les corps li\'es aux alg\`ebres enveloppantes des alg\`ebres de
	{L}ie.
	\newblock {\em Inst. Hautes \'Etudes Sci. Publ. Math.}, (31):5--19, 1966.
	
	\bibitem{Hum08}
	J.~E. Humphreys.
	\newblock {\em Representations of semisimple {L}ie algebras in the {BGG}
		category {$\mathscr{O}$}}, volume~94 of {\em Graduate Studies in
		Mathematics}.
	\newblock American Mathematical Society, Providence, RI, 2008.
	
	\bibitem{Jac}
	H.~P. Jakobsen.
	\newblock Hermitian symmetric spaces and their unitary highest weight modules.
	\newblock {\em J. Funct. Anal.}, 52(3):385--412, 1983.
	
	\bibitem{Jantzen79Lec750}
	J.~C. Jantzen.
	\newblock {\em Moduln mit einem h\"ochsten {G}ewicht}, volume 750 of {\em
		Lecture Notes in Mathematics}.
	\newblock Springer, Berlin, 1979.
	
	\bibitem{Ja}
	J.~C. Jantzen.
	\newblock {\em Einh\"ullende {A}lgebren halbeinfacher {L}ie-{A}lgebren},
	volume~3 of {\em Ergebnisse der Mathematik und ihrer Grenzgebiete (3)
		[Results in Mathematics and Related Areas (3)]}.
	\newblock Springer-Verlag, Berlin, 1983.
	
	\bibitem{Joseph-78}
	A.~Joseph.
	\newblock Gelfand-{K}irillov dimension for the annihilators of simple quotients
	of {V}erma modules.
	\newblock {\em J. London Math. Soc. (2)}, 18(1):50--60, 1978.
	
	\bibitem{Kr-Le}
	G.~R. {Krause} and T.~H. {Lenagan}.
	\newblock {\em {Growth of algebras and Gelfand-Kirillov dimension}}.
	\newblock Providence, RI: American Mathematical Society, revised edition, 2000.
	
	\bibitem{lusztig1985cellsI}
	G.~Lusztig.
	\newblock Cells in affine {W}eyl groups.
	\newblock In {\em Algebraic groups and related topics ({K}yoto/{N}agoya,
		1983)}, volume~6 of {\em Adv. Stud. Pure Math.}, pages 255--287.
	North-Holland, Amsterdam, 1985.
	
	\bibitem{lusztig1985A_n}
	G.~Lusztig.
	\newblock The two-sided cells of the affine {W}eyl group of type
	{$\tilde{A}_n$}.
	\newblock In {\em Infinite-dimensional groups with applications ({B}erkeley,
		{C}alif., 1984)}, volume~4 of {\em Math. Sci. Res. Inst. Publ.}, pages
	275--283. Springer, New York, 1985.
	
	\bibitem{lusztig2003hecke}
	G.~Lusztig.
	\newblock {\em Hecke algebras with unequal parameters}, volume~18 of {\em CRM
		Monograph Series}.
	\newblock American Mathematical Society, Providence, RI, 2003.
	
	\bibitem{NOTYK}
	K.~Nishiyama, H.~Ochiai, K.~Taniguchi, H.~Yamashita, and S.~Kato.
	\newblock {\em Nilpotent orbits, associated cycles and {W}hittaker models for
		highest weight representations}.
	\newblock Soci\'et\'e Math\'ematique de France, Paris, 2001.
	\newblock Ast{\'e}risque No. 273 (2001).
	
	\bibitem{Soergel1990}
	W.~Soergel.
	\newblock Kategorie $\ms{O}$, perverse {G}arben und {M}oduln \"uber den
	{K}oinvarianten zur {W}eylgruppe.
	\newblock {\em J. Amer. Math. Soc.}, 3(2):421--445, 1990.
	
	\bibitem{Vogan-78}
	D.~A. Vogan, Jr.
	\newblock Gelfand-{K}irillov dimension for {H}arish-{C}handra modules.
	\newblock {\em Invent. Math.}, 48(1):75--98, 1978.
	
	\bibitem{Vogan-91}
	D.~A. Vogan, Jr.
	\newblock Associated varieties and unipotent representations.
	\newblock In {\em Harmonic analysis on reductive groups ({B}runswick, {ME},
		1989)}, volume 101 of {\em Progr. Math.}, pages 315--388. Birkh\"auser
	Boston, Boston, MA, 1991.
	
\end{thebibliography}
\end{document}